% SIAM Article Template
\documentclass{article}

% Information that is shared between the article and the supplement
% (title and author information, macros, packages, etc.) goes into
% ex_shared.tex. If there is no supplement, this file can be included
% directly.

\usepackage{amsthm}

\usepackage{amsmath,amssymb,mathrsfs}%amsthm,
\usepackage{hyperref,xcolor}
\usepackage{changepage}
\usepackage{enumitem} 
\usepackage{graphicx}
\usepackage{tikz}
\usepackage[utf8]{inputenc}
\usepackage[T1]{fontenc}
\usepackage{hyperref}
\usepackage{comment}

\allowdisplaybreaks

\topmargin -1cm
\textheight 21cm
\textwidth 15cm
\oddsidemargin 1cm

%\linespread{1.1}

\numberwithin{equation}{section}

\newtheorem{myDefn}{Definition}[section]

\newtheorem{myProp}[myDefn]{Proposition}

\newtheorem{myRem}[myDefn]{Remark}

\newtheorem{myExa}[myDefn]{Example}

\newtheorem{myLem}[myDefn]{Lemma}

\newtheorem{myCor}[myDefn]{Corollary}

\newtheorem{myTheorem}[myDefn]{Theorem}

%\makeatletter
%\@addtoreset{myProp}{subsubsection}
%\@addtoreset{myDefn}{subsection}
%\@addtoreset{myLem}{subsection}
%\@addtoreset{myRem}{subsection}
%\@addtoreset{myCor}{subsection}
%\@addtoreset{myTheorem}{subsection}
%\makeatother 

%%%%%%%%%%%%%%%%%%%%%%%%%%%%%%%%%%%%  
%% Raccourcis 

\DeclareMathOperator*{\argmin}{argmin}
\DeclareMathOperator*{\minn}{minimize}

\def\nn{\mathrm{n}}

\usepackage{indentfirst}

\def\R{\mathbb{R}}
\def\N{\mathbb{N}}

\def\HH{\mathrm{H}}
\def\LL{\mathrm{L}}

%%%%%%%%%%%%%%%%%%%%%%%%%%%%%%%%%%%%  
%% Nouvelles commandes
\newcommand{\fonction}[5]{\begin{array}[t]{lrcl}#1 :&#2 &\longrightarrow &#3\\&#4& \longmapsto &#5 \end{array}}

\newcommand{\dual}[2]{\left\langle #1 , #2 \right\rangle}

\newlist{primenumerate}{enumerate}{1}
\setlist[primenumerate,1]{label={\roman*$'$}}

%%%%%%%%%%%%%%%%%%%%%%%%%%%%%%%%%%%%%%%%%%
%%%%%%%%%%%%%%%%%%%%%%%%%%%%%%%%%%%%%%%%%%
%%%%%%%%%%%%%%%%%%%%%%%%%%%%%%%%%%%%%%%%%%

\title{Shape optimization for variational inequalities: the scalar Tresca friction problem}

\author{Samir Adly\footnote{Institut de recherche XLIM. UMR CNRS 7252. Universit\'e de Limoges, France. \texttt{samir.adly@unilim.fr}}, Lo\"ic Bourdin\footnote{Institut de recherche XLIM. UMR CNRS 7252. Universit\'e de Limoges, France. \texttt{loic.bourdin@unilim.fr}}, 
Fabien Caubet\footnote{Universit\'e de Pau et des Pays de l'Adour, E2S UPPA, CNRS, LMAP, UMR 5142, 64000 Pau, France. \texttt{fabien.caubet@univ-pau.fr}}, Aymeric Jacob de Cordemoy\footnote{Universit\'e de Pau et des Pays de l'Adour, E2S UPPA, CNRS, LMAP, UMR 5142, 64000 Pau, France. \texttt{aymeric.jacob-de-cordemoy@univ-pau.fr}}
}

\begin{document}

\maketitle

\begin{abstract}
This paper investigates, without any regularization or penalization procedure, a shape optimization problem involving a simplified friction phenomena modeled by a scalar Tresca friction law. Precisely, using tools from convex and variational analysis such as proximal operators and the notion of twice epi-differentiability, we prove that the solution to a scalar Tresca friction problem admits a directional derivative with respect to the shape which moreover coincides with the solution to a boundary value problem involving Signorini-type unilateral conditions. Then we explicitly characterize the shape gradient of the corresponding energy functional and we exhibit a descent direction. Finally numerical simulations are performed to solve the corresponding energy minimization problem under a volume constraint which shows the applicability of our method and our theoretical results.
\end{abstract}

% REQUIRED
\textbf{Keywords:} Shape optimization, shape sensitivity analysis, variational inequalities, scalar Tresca friction law, Signorini's unilateral conditions, proximal operator, twice epi-differentiability.   

\medskip

\noindent \textbf{AMS Classification:}
49Q10, 49Q12, 35J85, 74M10, 74M15, 74P10.

%\tableofcontents

\section{Introduction}

\paragraph{Motivation}
On the one hand, shape optimization is the mathematical field whose aim is to find the optimal shape of a given object with respect to a given criterion (see, e.g.,~\cite{ALL,HENROT,SOKOZOL}). 
It is increasingly taken into account in industry in order to identify the optimal shape of a product who must satisfy some constraints. On the other hand, mechanical contact models are used to study the contact of deformable solids that touch each other on parts of their boundaries  (see, e.g.,~\cite{DAUTLIONS,KUSS,LIONS2}). Usually the contact prevents  penetration between the two rigid bodies, and possibly allows sliding modes which causes friction phenomena. A non-permeable contact can be described by the so-called {\it Signorini unilateral conditions} (see, e.g.,~\cite{15SIG,16SIG}) that take the form of inequality conditions on the contact surface, while a friction phenomenon can be described by the so-called {\it Tresca friction law} (see, e.g.,~\cite{KUSS}) which appears as a boundary condition involving nonsmooth inequalities depending on a friction threshold.

Shape optimization problems involving mechanical contact models have already been investigated in the literature (see, e.g.,~\cite{BEREM,FULM,HASKLAR,HASLINGER,HEINEMANN,HINTERMULLERLAURAIN} and references therein), and they are increasingly taken into account in industrial issues and engineering applications. Due to the involved inequalities and nonsmooth terms, the standard methods found in the literature usually consist in regularization (see, e.g.,~\cite{MAUALLJOU,CHAUDET2,LUFT}), penalization (see, e.g.,~\cite{CHAUDET}) or dualization (see~\cite[Chapter 4]{SOKOZOL} and~\cite{SOKOZOLE2}) procedures. In simple terms, regularization (resp. penalization) procedures consist in using Moreau's envelopes (resp. penalty functionals) to approximate the optimization problem associated with the model. However, both of these methods do not take into account the exact characterization of the solution and may perturb the original nature of the model. The dualization method used in~\cite{SOKOZOLE2} consists in describing the primal/dual pair as a saddle point of the associated Lagrangian. Then the dual problem leads to a characterization that involves only projection operators and thus Mignot’s theorem (see~\cite{MIGNOT}) about conical differentiability can be applied. However this method results in material/shape derivative characterizations that are implicit, as they involve dual elements. In this paper our aim is to propose a new methodology which allows to preserve the original nature of the problem, that is, without using any regularization or penalization procedure, and moreover to work only with the primal problem. Precisely our strategy is based on the theory of variational inequalities and on tools from convex and variational analysis such as the notion of proximal operator introduced by J.J.\ Moreau in~1965 (see~\cite{MOR}) and the notion of twice epi-differentiability introduced by R.T.\ Rockafellar in~1985 (see~\cite{Rockafellar}). To the best of our knowledge, this is the ﬁrst time that these concepts are applied in the context of shape optimization problems involving nonsmoothness, which makes this contribution new and original in the literature.

As a first step towards more realistic and more complex mechanical contact models, note that the present paper focuses only on a shape optimization problem involving a simplified friction phenomena modeled by a scalar Tresca friction law. The extension of our methodology to the vectorial elasticity model, or to other variational inequalities (such as Signorini-type models), will be the subject of future research.

\medskip

\paragraph{Description of the shape optimization problem and methodology}
In this paragraph, we use standard notations which are recalled in Section~\ref{rappelgeneral}. Let~$d\in \N^*$ be a positive integer which represents the dimension, and let~$f\in\HH^{1}(\R^{d})$ and~$g\in\HH^{2}(\R^{d})$ be such that $g>0$ almost everywhere (\textit{a.e.}) on~$\R^{d}$. In this paper, we consider the shape optimization problem given by
\begin{equation}\label{shapeOptim}
    \minn\limits_{ \substack{ \Omega\in \mathcal{U} \\ \vert \Omega \vert = \lambda } } \; \mathcal{J}(\Omega),
\end{equation}
where
\begin{equation*}
        \mathcal{U} :=\{ \Omega\subset\R^{d} \mid \Omega \text{ nonempty connected bounded open subset of } \R^{d}  \text{ with Lipschitz boundary} \},
\end{equation*}
with the volume constraint $\vert \Omega \vert = \lambda > 0$, where $\mathcal{J} : \mathcal{U} \to \R$ is the \textit{Tresca energy functional} defined by
\begin{equation*}
    \mathcal{J}(\Omega) := \frac{1}{2}\int_{\Omega}\left( \left\| \nabla{u_\Omega} \right\|^{2}+|u_{\Omega}|^{2}\right)+\int_{\Gamma}g|u_{\Omega}|-\int_{\Omega}fu_{\Omega},
\end{equation*}
where $\Gamma:=\partial{\Omega}$ is the boundary of $\Omega$ and where $u_\Omega \in\HH^{1}(\Omega)$ stands for the unique solution to the scalar Tresca friction problem given by
\begin{equation}\label{Trescaproblem2222}\tag{TP\ensuremath{{}_\Omega}}%\tag{TP${}_\Omega$}
\arraycolsep=2pt
\left\{
\begin{array}{rcll}
-\Delta u+u &=&f    & \text{ in } \Omega, \\
|\partial_{\nn}u|\leq g \text{ and }  u\partial_{\nn}u+g|u|&=&0  & \text{ on } \Gamma,
\end{array}
\right.
\end{equation}
for all~$\Omega \in \mathcal{U}$. Recall that, in contact mechanics, $f$ models volume forces and that the boundary condition in~\eqref{Trescaproblem2222} is known as the scalar version of the Tresca friction law (see, e.g.,~\cite[Section~1.3 Chapter~1]{GLOW2}) where $g$ is a given friction threshold. In this paper, we refer to it as the scalar Tresca friction law. Note that we focus here on minimizing the energy functional (as in~\cite{FULM,HPM, VELI}) which corresponds to maximize the compliance (see~\cite{ALL}). In simple terms, our research focuses on finding the "laziest shape" that can resist external forces, while taking into account the effect of friction on its surface.

Also recall that, for any~$\Omega \in \mathcal{U}$, the unique solution {to}~\eqref{Trescaproblem2222} is characterized by~$u_\Omega=\mathrm{prox}_{\phi_\Omega}(F_\Omega)$, where $F_\Omega \in\HH^{1}(\Omega)$ is the unique solution to the classical Neumann problem
$$
\arraycolsep=2pt
\left\{
\begin{array}{rcll}
-\Delta F+F &=&f    & \text{ in } \Omega, \\
\partial_{\nn}F&=&0 & \text{ on } \Gamma,
\end{array}
\right.
$$
and where $\mathrm{prox}_{\phi_{\Omega}} : \HH^1(\Omega) \to \HH^1(\Omega)$ stands for the proximal operator associated with the \textit{Tresca friction functional} $\phi_{\Omega} : \HH^1(\Omega) \to \R$ defined by
$$
\fonction{\phi_\Omega}{\HH^{1}(\Omega)}{\R}{v}{\displaystyle \phi_\Omega(v):=\int_{\Gamma}g|v|.}
$$We refer for instance to~\cite{4ABC} for details on existence/uniqueness and characterization of the solution to Problem~\eqref{Trescaproblem2222}.

To deal with the numerical treatment of the above shape optimization problem, a suitable expression of the shape gradient of~$\mathcal{J}$ is required. To this aim we follow the classical strategy developed in the shape optimization literature (see, e.g.,~\cite{ALL,HENROT}). Consider $\Omega_{0} \in \mathcal{U}$ and a direction~$\boldsymbol{V}\in\mathcal{C}^{1,\infty}(\R^{d},\R^{d}):=~\mathcal{C}^{1}(\R^{d},\R^{d})\cap\mathrm{W}^{1,\infty}(\R^{d},\R^{d})$. Then, for any $t\geq0$ sufficiently small such that~$\boldsymbol{\mathrm{id}}+t\boldsymbol{V}$ is a~$\mathcal{C}^{1}$-diffeomorphism of $\R^{d}$, we denote by~$\Omega_{t}:=(\boldsymbol{\mathrm{id}}+t\boldsymbol{V})(\Omega_{0}) \in \mathcal{U}$ and by~$u_{t} := u_{\Omega_t} \in\HH^{1}(\Omega_{t})$, where $\boldsymbol{\mathrm{id}} :  \R^{d}\rightarrow \R^{d}$ stands for the identity operator. To get an expression of the shape gradient of~$\mathcal{J}$ at~$\Omega_0$ in the direction~$\boldsymbol{V}$, the first step naturally consists in obtaining an expression of the derivative of the map~$t \in \R_+ \mapsto u_{t} \in\HH^{1}(\Omega_{t})$ at~$t=0$. However this map is not well defined since the codomain~$\HH^{1}(\Omega_{t})$ depends on the variable~$t$. To overcome the issue that~$u_t$ is defined on the moving domain~$\Omega_t$, we consider the change of variables~$\boldsymbol{\mathrm{id}}+t\boldsymbol{V}$ and we prove that~$\overline{u}_t:=u_{t}\circ(\boldsymbol{\mathrm{id}}+t\boldsymbol{V})\in\HH^{1}(\Omega_{0})$ is the unique solution to the perturbed scalar Tresca friction problem given by
\begin{equation*}
\arraycolsep=2pt
\left\{
\begin{array}{rcll}
-\mathrm{div}\left(\mathrm{A}_{t}\nabla\overline{u}_t\right)+\overline{u}_t\mathrm{J}_{t} &=&f_{t}\mathrm{J}_{t}    & \text{ in } \Omega_{0} , \\
\left|\mathrm{A}_{t}\nabla\overline{u}_t\cdot\boldsymbol{\nn}\right|\leq g_{t}\mathrm {J}_{\mathrm{T}_{t}}\text{ and }  \overline{u}_t\mathrm{A}_{t}\nabla\overline{u}_t\cdot\boldsymbol{\nn}+g_{t}\mathrm {J}_{\mathrm{T}_{t}}\left|\overline{u}_t\right|&=&0  & \text{ on } \Gamma_{0},
\end{array}
\right.
\end{equation*}
considered on the fixed domain~$\Omega_0$, where~$\Gamma_0 := \partial \Omega_0$, $f_{t}:=f\circ(\boldsymbol{\mathrm{id}}+t\boldsymbol{V})\in\HH^{1}(\R^{d})$, $g_{t}:=g\circ(\boldsymbol{\mathrm{id}}+t\boldsymbol{V})\in \HH^{1}(\R^{d})$ and where $\mathrm{J}_{t}$, $\mathrm{A}_{t}$ and $\mathrm {J}_{\mathrm{T}_{t}}$ are standard Jacobian terms resulting from the change of variables used in the weak variational formulation of Problem (TP${}_{\Omega_t}$) (see details in Subsection~\ref{PerturbTrescatyoefric}). Hence, the shape perturbation is shifted, via the change of variables, to the data of the scalar Tresca friction problem. 

Now, to obtain an expression of the derivative of the map~$t \in \R_+ \mapsto \overline{u}_{t} \in \HH^{1}(\Omega_0)$ at~$t=~0$, which will be denoted by~$\overline{u}'_0 \in \HH^{1}(\Omega_0)$ and called \textit{material directional derivative} (the terminology \textit{directional} has been added with respect to the literature since, in the present nonsmooth framework, the expression of $\overline{u}'_0 $ will not be linear with respect to the direction $\boldsymbol{V}$, see~Remark~\ref{remnonlinear} for details), we write that $\overline{u}_t=\mathrm{prox}_{\phi_{t}}(F_{t})
$, where $F_{t}\in\HH^{1}(\Omega_{0})$ is the unique solution to the perturbed Neumann problem
\begin{equation*}
\arraycolsep=2pt
\left\{
\begin{array}{rcll}
-\mathrm{div}\left(\mathrm{A}_{t}\nabla F_{t}\right)+F_{t}\mathrm{J}_{t} &=&f_{t}\mathrm{J}_{t}    & \text{ in } \Omega_{0} , \\
\mathrm{A}_{t}\nabla F_{t}\cdot \boldsymbol{\nn}&=&0  & \text{ on } \Gamma_{0},
\end{array}
\right.
\end{equation*}
and where $\phi_{t}$ is the perturbed Tresca friction functional given by
$$
\displaystyle\fonction{\phi_{t}}{\HH^{1}(\Omega_{0})}{\R}{v}{\displaystyle \phi_{t}(v):=\int_{\Gamma_{0}}g_{t}\mathrm {J}_{\mathrm{T}_{t}}|v|,}
$$
considered on the perturbed Hilbert space $(\HH^{1}(\Omega_{0}),\dual{\cdot}{\cdot}_{\mathrm{A}_{t},\mathrm{J}_{t}})$ (see details on the perturbed scalar product in Subsection~\ref{boundary}). To deal with the differentiability (in a generalized sense) of the parameterized proximal operator~$\mathrm{prox}_{\phi_{t}} : \HH^{1}(\Omega_{0}) \to \HH^{1}(\Omega_{0})$ we invoke the notion of \textit{twice epi-differentiability} for convex functions introduced by R.T.~Rockafellar in~1985 (see~\cite{Rockafellar}) which leads to the \textit{protodifferentiability} of the corresponding proximal operators. Actually, since the work by R.T.~Rockafellar deals only with non-parameterized convex functions, we will use instead the recent work~\cite{8AB} where the notion of twice epi-differentiability has been adapted to parameterized convex functions.

Before listing the main theoretical results obtained in the present paper thanks to the above strategy, let us mention that the sensitivity analysis of the scalar Tresca friction problem~\eqref{Trescaproblem2222} with respect to perturbations of~$f$ and~$g$ has already been performed in our previous paper~\cite{BCJDC}. However, since it was done in a general context (not in the specific context of shape optimization), the previous paper~\cite{BCJDC} considered only the case where $\mathrm{J}_{t}=\mathrm{J}_{\mathrm{T}_{t}}=1$ and $\mathrm{A_{t}}=\mathrm{I}$ is the identity matrix of~$\R^{d \times d}$ and thus the scalar product $\dual{\cdot}{\cdot}_{\mathrm{A}_{t},\mathrm{J}_{t}}$ was independent of the parameter~$t$. Hence some nontrivial adjustments are required to deal with the $t$-dependent context of the present work. We refer to Subsection~\ref{PerturbTrescatyoefric} for details.

Finally, notice that, in this paper, we do not prove theoretically the existence of a solution to the shape optimization problem~\eqref{shapeOptim}. The interested reader can find some related existence results (for very specific geometries in the two dimensional case) in~\cite{HASKLAR}.

\medskip

\paragraph{Main theoretical results}
Our main theoretical results, stated in Theorems~\ref{Lagderiv} and~\ref{shapederivofJ}, are summarized below. However, to make their expressions more explicit and elegant, we present them under certain additional regularity assumptions, such as~$u_0 \in \HH^3(\Omega_0)$, within the framework of Corollaries~\ref{LagderivSigno},~\ref{shapederivaticesigno} and~\ref{shapederivofJ3}, making them more suitable for this introduction.
\begin{enumerate}
  \item[(i)] Under some appropriate assumptions described in Corollary~\ref{LagderivSigno}, the material directional derivative $\overline{u}'_0\in\HH^{1}(\Omega_{0})$ is the unique weak solution to the scalar Signorini problem given by
\begin{equation*}
\arraycolsep=2pt
\left\{
\begin{array}{rcll}
-\Delta \overline{u}'_0+\overline{u}'_0 & = & -\Delta\left(\boldsymbol{V}\cdot\nabla{u_{0}}\right)+\boldsymbol{V}\cdot\nabla{u_{0}}  & \text{ in } \Omega_{0} , \\
\overline{u}'_0 & = & 0  & \text{ on } \Gamma^{u_{0},g}_{\mathrm{D}}, \\
\partial_{\nn} \overline{u}'_0 & = & h^m(\boldsymbol{V})  & \text{ on } \Gamma^{u_{0},g}_{\mathrm{N}}, \\
 \overline{u}'_0\leq0\text{, } \partial_{\nn}\overline{u}'_0\leq h^m(\boldsymbol{V}) \text{ and } \overline{u}'_0\left(\partial_{\nn}\overline{u}'_0-h^m(\boldsymbol{V})\right) & = & 0  & \text{ on } \Gamma^{u_{0},g}_{\mathrm{S-}}, \\
 \overline{u}'_0\geq0\text{, } \partial_{\nn}\overline{u}'_0\geq h^m(\boldsymbol{V}) \text{ and } \overline{u}'_0\left(\partial_{\nn}\overline{u}'_0- h^m(\boldsymbol{V})\right) & = & 0  & \text{ on } \Gamma^{u_{0},g}_{\mathrm{S+}},
\end{array}
\right.
\end{equation*}
where $h^m(\boldsymbol{V}):= (\frac{\nabla{g}}{g}\cdot \boldsymbol{V}-\nabla{\boldsymbol{V}}\boldsymbol{\nn}\cdot\boldsymbol{\nn})\partial_{\nn}u_{0}+ (\nabla{\boldsymbol{V}}+\nabla{\boldsymbol{V}}^{\top})\nabla{u_{0}}\cdot\boldsymbol{\nn}\in\LL^{2}(\Gamma_{0})$, where~$\nabla \boldsymbol{V}$ stands for the standard Jacobian matrix of~$\boldsymbol{V}$, and where $\Gamma_0$ is decomposed (up to a null set) as~$\Gamma^{u_{0},g}_{\mathrm{N}}\cup
\Gamma^{u_{0},g}_{\mathrm{D}}\cup\Gamma^{u_{0},g}_{\mathrm{S-}}\cup\Gamma^{u_{0},g}_{\mathrm{S+}}$ (see details in Theorem~\ref{Lagderiv}). Recall that the boundary conditions on~$\Gamma^{u_{0},g}_{\mathrm{S-}}$ and~$\Gamma^{u_{0},g}_{\mathrm{S+}}$ are known as the scalar versions of the Signorini unilateral conditions (see, e.g.,~\cite[Section~1]{LIONS2}).
\item[(ii)]  We deduce in Corollary~\ref{shapederivaticesigno} that, under appropriate assumptions, \textit{the shape directional derivative}, defined by~$u'_{0}:=\overline{u}'_0-\nabla{u_{0}}\cdot\boldsymbol{V}\in\HH^{1}(\Omega_{0})$ (which roughly corresponds to the derivative of the map $t\in\R_{+} \mapsto u_{t} \in \HH^{1}(\Omega_t)$ at~$t=0$), is the unique weak solution to the scalar Signorini problem given by
$$
\hspace{-0.5cm}
\arraycolsep=2pt
\left\{
\begin{array}{rcll}
-\Delta u_{0}'+u_{0}' & = & 0  & \text{ in } \Omega_{0} , \\
u_{0}' & = & -\boldsymbol{V}\!\cdot\!\nabla{u_{0}} & \text{ on } \Gamma^{u_{0},g}_{\mathrm{D}}, \\
\partial_{\nn} u_{0}' & = & h^s(\boldsymbol{V})  & \text{ on } \Gamma^{u_{0},g}_{\mathrm{N}}, \\
\! u_{0}'\leq-\boldsymbol{V}\!\cdot\!\nabla{u_{0}}\text{, } \partial_{\nn}u_{0}'\leq h^s(\boldsymbol{V}) \text{ and } \left(u_{0}'+\boldsymbol{V}\!\cdot\!\nabla{u_{0}}\right)\left(\partial_{\nn}u_{0}'-h^s(\boldsymbol{V})\right) & = & 0  & \text{ on } \Gamma^{u_{0},g}_{\mathrm{S-}}, \\
\! u_{0}'\geq-\boldsymbol{V}\!\cdot\!\nabla{u_{0}}\text{, } \partial_{\nn}u_{0}'\geq h^s(\boldsymbol{V}) \text{ and } \left(u_{0}'+\boldsymbol{V}\!\cdot\!\nabla{u_{0}}\right)\left(\partial_{\nn}u_{0}'- h^s(\boldsymbol{V})\right) & = & 0  & \text{ on } \Gamma^{u_{0},g}_{\mathrm{S+}},
\end{array}
\right.
$$
where $h^s(\boldsymbol{V}):=\boldsymbol{V}\cdot \boldsymbol{\nn} (\partial_{\nn}  (\partial_{\nn}u_0 )-\frac{\partial^2 u_0}{\partial \mathrm{n}^2} )+\nabla_{\Gamma_0} u_{0} \cdot\nabla_{\Gamma_0}(\boldsymbol{V}\cdot\boldsymbol{\mathrm{\nn}})-g\nabla{(\frac{\partial_{\nn}u_{0}}{g})}\cdot\boldsymbol{V}\in\LL^{2}(\Gamma_{0})$.
\item[(iii)] Finally the two previous items are used to obtain Corollary~\ref{shapederivofJ3} asserting that, under appropriate assumptions, the shape gradient of $\mathcal{J}$ at~$\Omega_{0}$ in the direction~$\boldsymbol{V}$ is given by
$$ \small{ \mathcal{J}'(\Omega_{0})(\boldsymbol{V})= \int_{\Gamma_{0}}\boldsymbol{V}\cdot\boldsymbol{\nn}\left(\frac{\left\|\nabla{u_{0}}\right\|^{2}+\left|u_{0}\right|^{2}}{2}-fu_{0}+Hg\left| u_{0} \right| -\partial_{\nn}\left(u_{0}\partial_{\nn}u_{0}\right)+gu_{0}\nabla{\left(\frac{\partial_{\nn}u_{0}}{g}\right)}\cdot\boldsymbol{\nn}\right), }
$$
where $H$ stands for the mean curvature of $\Gamma_{0}$. We emphasize that, with the Tresca energy functional~$\mathcal{J}$ considered in the present work, we obtain that $\mathcal{J}'(\Omega_0)$ depends only on $u_0$ (and not on $u’_0$). As a consequence its expression is explicit (and also linear) with respect to the direction $\boldsymbol{V}$. In particular this implies that there is no need to introduce any adjoint problem to perform numerical simulations (see Remark~\ref{remarkadjoint} for details). 
\end{enumerate}
\medskip

\paragraph{Application to shape optimization and numerical simulations} 

The expression of the shape gradient of $\mathcal{J}$ stated in (iii) allows us to exhibit an explicit descent direction of $\mathcal{J}$ (see Section~\ref{numericalsim} for details). Hence, using this descent direction together with a basic Uzawa algorithm to take into account the volume constraint, we perform in Section~\ref{numericalsim} numerical simulations to solve the shape optimization problem~\eqref{shapeOptim} on a two-dimensional example. Furthermore, we present several numerical results with different values of $g$, allowing us to emphasize an interesting behavior of the optimal shape. Precisely, in our example, it seems to transit from the optimal shape when one replaces the Tresca problem and its energy functional by Dirichlet ones when $g$ goes to infinity pointwisely, to the optimal shape when one replaces the Tresca problem and its energy functional by Neumann ones when $g$ goes to zero pointwisely.

\medskip

\paragraph{Organization of the paper}
The paper is organized as follows. Section~\ref{rappelgeneral} is dedicated to some basic recalls from convex, variational and functional analysis, differential geometry and boundary value problems involved all along the paper. In Section~\ref{mainresult}, we state and prove our main theoretical results. Finally, in Section~\ref{numericalsim}, numerical simulations are performed to solve the shape optimization problem~\eqref{shapeOptim} on a two-dimensional example.\\

\section{Preliminaries}\label{rappelgeneral}

\subsection{Reminders on proximal operator and twice epi-differentiability}\label{rappelconvex}

For notions and results recalled in this subsection, we refer to standard references from convex and variational analysis literature such as~\cite{BREZ2,MINTY,ROCK2} and~\cite[Chapter~12]{ROCK}. In what follows, $(\mathcal{H}, \dual{\cdot}{\cdot}_{\mathcal{H}})$ stands for a general real Hilbert space. The \textit{domain} and the \textit{epigraph} of an extended real value function~$\psi : \mathcal{H}\rightarrow \mathbb{R}\cup\left\{\pm \infty \right\}$ are respectively defined by
$$
\mathrm{dom}\left(\psi\right):=\left\{x\in \mathcal{H} \mid \psi(x)<+\infty \right\} \quad \text{and} \quad
\mathrm{epi}\left(\psi\right):=\left\{(x,t)\in \mathcal{H}\times\mathbb{R}\mid \psi(x)\leq t\right\}.
$$
Recall that $\psi$ is said to be \textit{proper} if $\mathrm{dom}(\psi)\neq \emptyset$ and $\psi(x)>-\infty$ 
for all~$x\in\mathcal{H}$, and that $\psi$ is convex (resp.\ lower semi-continuous) if and only if~$\mathrm{epi}(\psi)$ is a convex (resp.\ closed) subset of~$\mathcal{H}\times\R$. When $\psi$ is proper, we denote by $\partial{\psi}  :  \mathcal{H} \rightrightarrows \mathcal{H}$ its \textit{convex subdifferential operator}, defined by 
 $$
 \partial{\psi}(x):=\left\{y\in\mathcal{H} \mid \forall z\in\mathcal{H}\text{, } \dual{y}{z-x}_{\mathcal{H}}\leq \psi(z)-\psi(x)\right\},
 $$
when $x\in \mathrm{dom}(\psi)$, and by $\partial{\psi}(x):= \emptyset$ whenever $x\notin \mathrm{dom}(\psi)$. The notion of proximal operator has been introduced by J.J.\ Moreau in 1965 (see~\cite{MOR}) as follows.

\begin{myDefn}\label{proxi}
The \textit{proximal operator} associated with a proper, lower semi-continuous and convex function $\psi  : \mathcal{H} \rightarrow \R\cup\left\{+\infty\right\}$ is the map~$\mathrm{prox}_{\psi}  :  \mathcal{H} \rightarrow \mathcal{H}$ defined by
 $$
      \mathrm{prox}_{\psi}(x):=\underset{y\in\mathcal{H}}{\argmin}\left[ \psi(y)+\frac{1}{2}\left \| y-x \right \|^{2}_{\mathcal{H}}\right]=(\mathrm{id}+\partial \psi)^{-1}(x),
 $$
for all $x\in\mathcal{H}$, where $\mathrm{id}  :  \mathcal{H}\rightarrow \mathcal{H}$ stands for the identity operator.
\end{myDefn}

Recall that, if $\psi  :  \mathcal{H} \rightarrow \R\cup\left\{+\infty\right\}$ is a proper, lower semi-continuous and convex function, then its subdifferential~$\partial{\psi}$ is a maximal monotone operator (see, e.g.,~\cite{ROCK2}), and thus its proximal operator~$\mathrm{prox}_{\psi} : \mathcal{H} \rightarrow \mathcal{H}$ is well-defined, single-valued and nonexpansive, i.e. Lipschitz continuous with modulus $1$ (see, e.g.,~\cite[Chapter II]{BREZ2}).

As mentioned in Introduction, the unique solution to the scalar Tresca friction problem considered in this paper can be expressed via the proximal operator of the associated Tresca friction functional $\phi_\Omega$. Therefore the shape sensitivity analysis of this problem is related to the differentiability (in a generalized sense) of the involved proximal operator. To investigate this issue, we will use the notion of twice epi-differentiability introduced by R.T.~Rockafellar in~1985 (see~\cite{Rockafellar}) defined as the Mosco epi-convergence of second-order difference quotient functions. Our aim in what follows is to provide reminders and backgrounds on these notions for the reader's convenience. For more details, we refer to~\cite[Chapter 7, Section B p.240]{ROCK} for the finite-dimensional case and to~\cite{DO} for the infinite-dimensional case. The strong (resp.\ weak) convergence of a sequence in~$\mathcal{H}$ will be denoted by~$\rightarrow$ (resp.\ $\rightharpoonup$) and note that all limits with respect to~$t$ will be considered for~$t \to 0^+$.

\begin{myDefn}[Mosco convergence]\label{limitemuch}
The \textit{outer}, \textit{weak-outer}, \textit{inner} and \textit{weak-inner limits} of a parameterized family~$(S_{t})_{t>0}$ of subsets of $\mathcal{H}$ are respectively defined by
\begin{eqnarray*}
      \mathrm{lim}\sup S_{t}&:=&\left\{ x\in \mathcal{H} \mid \exists (t_{n})_{n\in\mathbb{N}}\rightarrow 0^{+}, \; \exists \left(x_{n}\right)_{n\in\mathbb{N}}\rightarrow x, \; \forall n\in\mathbb{N}, \; x_{n}\in S_{t_{n}}\right\},\\
     \mathrm{w}\text{-}\mathrm{lim}\sup S_{t}&:=&\left\{ x\in \mathcal{H} \mid \exists (t_{n})_{n\in\mathbb{N}}\rightarrow 0^{+}, \; \exists \left(x_{n}\right)_{n\in\mathbb{N}}\rightharpoonup x, \; \forall n\in\mathbb{N}, \; x_{n}\in S_{t_{n}}\right\},\\
     \mathrm{lim}\inf S_{t}&:=&\left\{ x\in \mathcal{H} \mid \forall (t_{n})_{n\in\mathbb{N}}\rightarrow 0^{+}, \; \exists \left(x_{n}\right)_{n\in\mathbb{N}}\rightarrow x, \; \exists N\in\mathbb{N}, \; \forall n\geq N, \; x_{n}\in S_{t_{n}}\right\},\\
     \mathrm{w}\text{-}\mathrm{lim}\inf S_{t}&:=&\left\{ x\in \mathcal{H} \mid \forall (t_{n})_{n\in\mathbb{N}}\rightarrow 0^{+}, \; \exists \left(x_{n}\right)_{n\in\mathbb{N}}\rightharpoonup x, \; \exists N\in\mathbb{N}, \; \forall n\geq N, \; x_{n}\in S_{t_{n}}\right\}.
\end{eqnarray*}
The family~$(S_{t})_{t>0}$ is said to be \textit{Mosco convergent} if~$
\mathrm{w}\text{-}\mathrm{lim}\sup S_{t}\subset\mathrm{lim}\inf S_{t}
$. In that case all the previous limits are equal and we write
$$
     \mathrm{M}\text{-}\mathrm{lim}~S_{t}:=\mathrm{lim}\inf S_{t}=\mathrm{lim}\sup S_{t}=\mathrm{w}\text{-}\mathrm{lim}\inf S_{t}=\mathrm{w}\text{-}\mathrm{lim}\sup S_{t}.
$$
\end{myDefn}

\begin{myDefn}[Mosco epi-convergence]\label{Mosco}
  Let $(\psi_{t})_{t>0}$ be a parameterized family of functions~$\psi_{t}  : \mathcal{H}\rightarrow \mathbb{R}\cup\left\{\pm \infty \right\}$ for all $t>0$.
 We say that $(\psi_{t})_{t>0}$ is \textit{Mosco epi-convergent} if~$(\mathrm{epi}(\psi_{t}))_{t>0}$ is Mosco convergent in~$\mathcal{H}\times \R$. Then we denote by $\mathrm{ME}\text{-}\mathrm{lim}~ \psi_{t}  :  \mathcal{H}\rightarrow \mathbb{R}\cup\left\{\pm \infty \right\}$ the function characterized by its epigraph~$\mathrm{epi}\left(\mathrm{ME}\text{-}\mathrm{lim}~\psi_{t}\right):=\mathrm{M}\text{-}\mathrm{lim}$ $\displaystyle \mathrm{epi}\left(\psi_{t}\right)$ and we say that $(\psi_{t})_{t>0}$ Mosco epi-converges to~$\mathrm{ME}\text{-}\mathrm{lim}~\psi_{t}$.
 \end{myDefn}

\begin{myRem}\normalfont
In Definition \ref{Mosco}, the abbreviation ME stands for the \textit{Mosco Epi-convergence} (which is related to functions), while the abbreviation M stands for the \textit{Mosco convergence} (related to subsets).
\end{myRem}

The notion of twice epi-differentiability was originally introduced for nonparameterized convex functions. However, as mentioned in Introduction, the framework of the present paper requires an extended version to parameterized convex functions which has recently been developed in~\cite{8AB}. To provide recalls on this extended notion, when considering a function~$\Psi  :  \mathbb{R}_{+}\times \mathcal{H}\rightarrow \mathbb{R}\cup\left\{+\infty\right\}$ such that, for all $t\geq0$, $\Psi(t,\cdot) : \mathcal{H}\rightarrow \mathbb{R} \cup\left\{+\infty\right\}$ is a proper function, we will make use of the following two notations: $\partial \Psi(0,\mathord{\cdot} )(x)$ stands for the convex subdifferential operator at~$x\in\mathcal{H}$ of the function~$\Psi(0,\cdot)$, and for each $t\geq 0$, $\Psi^{-1}(t , \mathbb{R}):=\left\{ x\in\mathcal{H}\mid \; \Psi(t,x)\in\R \right\}$ and $\Psi^{-1}(\mathord{\cdot} , \mathbb{R}):=\displaystyle\cap_{t\geq 0}\Psi^{-1}(t , \mathbb{R})  $.

 \begin{myDefn}[Twice epi-differentiability depending on a parameter]\label{epidiffpara}
Let~$\Psi  :  \mathbb{R}_{+}\times \mathcal{H}\rightarrow \mathbb{R} \cup\left\{+\infty\right\}$ be a function such that, for all $t\geq0$, $\Psi(t,\cdot) : \mathcal{H}\rightarrow \mathbb{R} \cup\left\{+\infty\right\}$ is a proper lower semi-continuous convex function. Then $\Psi$ is said to be \textit{twice epi-differentiable} at $x\in \Psi^{-1}(\mathord{\cdot} , \mathbb{R})$ for~$y\in\partial \Psi(0,\mathord{\cdot} )(x)$ if the family of second-order difference quotient functions $(\Delta_{t}^{2}\Psi(x|y))_{t>0}$ defined by
$$
  \fonction{\Delta_{t}^{2}\Psi(x|y) }{\mathcal{H}}{\mathbb{R}\cup\left\{+\infty\right\}}{z}{ \Delta_{t}^{2}\Psi(x|y) (z) := \displaystyle\frac{\Psi(t,x+t z)-\Psi(t,x)-t\dual{ y}{z}_{\mathcal{H}}}{t^{2}},}
$$
for all $t>0$, is Mosco epi-convergent. In that case we denote by
$$
\mathrm{D}_{e}^{2}\Psi(x|y):=\mathrm{ME}\text{-}\mathrm{lim}~\Delta_{t}^{2}\Psi(x|y) ,
$$
which is called the \textit{second-order epi-derivative} of $\Psi$ at $x$ for $y$.
\end{myDefn}

\begin{myRem}\normalfont
If the real-valued function $\Psi$ is $t$-independent, Definition~\ref{epidiffpara} recovers
the classical notion of twice epi-differentiability originally introduced in~\cite{Rockafellar} (up to
the multiplicative constant~$\frac{1}{2}$).  
\end{myRem}

\begin{myRem}\label{lscC}\normalfont 
It is well-known that the convexity and the lower-semicontinuity are preserved by the Mosco epi-convergence. However, the properness of the Mosco epi-limit may fail even if the sequence is proper. If, for each $t\geq0$, $\Psi(t,\cdot) : \mathcal{H}\rightarrow \mathbb{R} \cup\left\{+\infty\right\}$ is a proper, lower semi-continuous and convex function, then the Mosco epi-limi $\mathrm{D}_{e}^{2}\Psi(x|y)$ (when it exists) is also lower semi-continuous and convex function. However, it may be possible that there exists some $z\in \mathcal{H}$ such that $\mathrm{D}_{e}^{2}\Psi(x|y)(z)=-\infty$ (see, e.g.,~\cite[Example 4.4 p.1711]{8AB}).
\end{myRem}

To illustrate the notion of twice epi-differentiability, two examples extracted from~\cite[Lemma~5.2 p.1717]{8AB} are given below. The first example is about a $t$-independent function which will be useful in this paper (see Lemma~\ref{épidiffgabs}) and the second one concerns a $t$-dependent function.

\begin{myExa}\label{epidiffabs}\normalfont
The classical absolute value map $\left|\cdot\right|  :  \mathbb{R}\rightarrow\mathbb{R}$, which is a proper lower semi-continuous convex function on $\mathbb{R}$, is twice epi-differentiable at any $x\in\R$ for any~$y\in\partial{\left|\cdot\right|}(x)$, and its second-order epi-derivative is given by~$ \mathrm{D}_{e}^{2}|\mathord{\cdot} |(x|y)=\iota_{\mathrm{K}_{x,y}}$, where $\mathrm{K}_{x,y}$ is the nonempty closed convex subset of $\R$ defined by
$$
\mathrm{K}_{x,y}:=
\left\{
\begin{array}{lcll}
\R	&   & \text{ if } x\neq0 , \\
\mathbb{R}^{-}	&   & \text{ if } x=0 \text{ and } y=-1 , \\
\mathbb{R}^{+}	&   & \text{ if } x=0 \text{ and } y=1 , \\
\left\{0\right\}	&   & \text{ if } x=0 \text{ and } y\in (-1,1),
\end{array}
\right.
$$
and where~$\iota_{\mathrm{K}_{x,y}}$ stands for the indicator function of~$\mathrm{K}_{x,y}$, defined by $\iota_{\mathrm{K}_{x,y}}(z):=0$ if $z\in\mathrm{K}_{x,y}$, and~$\iota_{\mathrm{K}_{x,y}}(z):=+\infty$ otherwise.
\end{myExa}

\begin{myExa}\normalfont
Consider the function~$\Psi : \R_+ \times \R \to \R$ defined by~$\Psi(t,x) := \vert x-t^2 \vert$ for all~$(t,x) \in \R_+ \times \R$. For each $t\geq0$, $\Psi(t,\cdot)$ is a proper, lower semi-continuous and convex function. For all~$x\in\R$ and all $y\in\partial\Psi(0,\cdot)(x)$, $\Psi$ is twice epi-differentiable at $x$ for $y$ and its second-order epi-derivative is given by 
$$
\mathrm{D}_{e}^{2}\Psi(x|y)=\left\{
\begin{array}{lcll}
\iota_{\R}&     & \text{ if } x\neq0, \\
\iota_{\R^{-}}	&   & \text{ if } x=0 \text{ and } y=-1, \\
\iota_{\R^{+}}-2	&   & \text{ if } x=0 \text{ and } y=1, \\
\iota_{\left\{0\right\}}-y-1	&   & \text{ if } x=0 \text{ and } y\in (-1,1).
\end{array}
\right.
$$
\end{myExa}

Finally the next proposition (which can be found in~\cite[Theorem 4.15 p.1714]{8AB}) is the key point to derive our main results in the present work.

\begin{myProp}\label{TheoABC2018}
Let~$\Psi  :  \mathbb{R}_{+}\times \mathcal{H}\rightarrow \mathbb{R} \cup \left\{+\infty\right\}$ be a function such that, for all $t\geq0$, $\Psi(t,\cdot) : \mathcal{H}\rightarrow \mathbb{R} \cup\left\{+\infty\right\}$ is a proper, lower semi-continuous and convex function. Let $F  :  \mathbb{R}_{+}\rightarrow \mathcal{H}$ and~$u  :  \mathbb{R}_{+}\rightarrow \mathcal{H}$ be defined by
$$
    u(t):=\mathrm{prox}_{\Psi(t,\mathord{\cdot} )}(F(t)),
$$
for all~$t\geq 0$. If the conditions 
\begin{enumerate}
    \item[{\rm (i)}]  $F$ is differentiable at $t=0$;
    \item[{\rm (ii)}]  $\Psi$ is twice epi-differentiable at $u(0)$ for $F(0)-u(0)\in\partial \Psi(0,\mathord{\cdot} )(u(0))$;
    \item[{\rm (iii)}]  $\mathrm{D}_{e}^{2}\Psi(u(0)|F(0)-u(0))$ is a proper function on $\mathcal{H}$;
\end{enumerate}
are satisfied, then $u$ is differentiable at $t=0$ with
$$
u'(0)=\mathrm{prox}_{\mathrm{D}_{e}^{2}\Psi(u(0)|F(0)-u(0))}(F'(0)).
$$
\end{myProp}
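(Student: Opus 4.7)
The plan is to exploit the resolvent characterization of the proximal operator, namely $u(t)=(\mathrm{id}+\partial\Psi(t,\mathord{\cdot}))^{-1}(F(t))$, which is equivalent to the subdifferential inclusion
\begin{equation*}
y(t):=F(t)-u(t)\in\partial\Psi(t,\mathord{\cdot})(u(t)), \qquad t\geq 0.
\end{equation*}
Introducing the candidate difference quotient $w_{t}:=(u(t)-u(0))/t$ for $t>0$, the task reduces to showing that $w_{t}$ converges strongly in $\mathcal{H}$ as $t\to 0^{+}$ to $\mathrm{prox}_{\mathrm{D}_{e}^{2}\Psi(u(0)|y(0))}(F'(0))$, where $y(0)=F(0)-u(0)$.

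The algebraic core of the argument is to recognize $w_{t}$ itself as the proximal point of the second-order difference quotient $\Delta_{t}^{2}\Psi(u(0)|y(0))$ at a $t$-dependent reference point. Indeed, the convex chain rule applied to the affine dilation $z\mapsto u(0)+tz$ gives
\begin{equation*}
\partial_{z}\bigl(\Delta_{t}^{2}\Psi(u(0)|y(0))\bigr)(z)=\frac{1}{t}\bigl[\partial\Psi(t,\mathord{\cdot})(u(0)+tz)-y(0)\bigr].
\end{equation*}
Evaluating at $z=w_{t}$, using $u(0)+tw_{t}=u(t)$ together with $y(t)\in\partial\Psi(t,\mathord{\cdot})(u(t))$, and substituting $(y(t)-y(0))/t=(F(t)-F(0))/t-w_{t}$, one obtains the exact identity
\begin{equation*}
w_{t}=\mathrm{prox}_{\Delta_{t}^{2}\Psi(u(0)|y(0))}\!\left(\frac{F(t)-F(0)}{t}\right).
\end{equation*}

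It then remains to pass to the limit as $t\to 0^{+}$. By assumption~(i), $(F(t)-F(0))/t$ converges strongly in $\mathcal{H}$ to $F'(0)$; by assumption~(ii), the family $(\Delta_{t}^{2}\Psi(u(0)|y(0)))_{t>0}$ Mosco epi-converges to $\mathrm{D}_{e}^{2}\Psi(u(0)|y(0))$, which is lower semi-continuous and convex by Remark~\ref{lscC} and proper by assumption~(iii). Invoking the version of Attouch's theorem suited to parameterized convex functions on a Hilbert space, which precisely asserts that Mosco epi-convergence of proper lower semi-continuous convex functions toward a proper limit is equivalent to continuity of the associated proximal operators along strongly convergent arguments, one concludes that $w_{t}$ converges strongly to $\mathrm{prox}_{\mathrm{D}_{e}^{2}\Psi(u(0)|y(0))}(F'(0))$, which is the claimed expression for $u'(0)$.

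The main obstacle is the strong, rather than merely weak, convergence of the proximal points in this last step. Properness of the epi-limit (assumption~(iii)) is essential here: without it the right-hand side would not even be well defined and the Attouch-type argument would collapse. In the infinite-dimensional setting, this strong convergence of resolvents under Mosco epi-convergence is delicate and is typically obtained through Moreau envelope energy estimates exploiting monotonicity and nonexpansiveness; this is the real technical content of the result. All other steps are formal manipulations with convex subdifferentials and affine reparametrizations.
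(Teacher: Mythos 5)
Your proposal is correct and follows essentially the same route as the proof of this result in the cited reference (the paper does not reprove Proposition~\ref{TheoABC2018} but invokes~[8AB, Theorem~4.15]): the resolvent characterization giving the exact identity $\frac{u(t)-u(0)}{t}=\mathrm{prox}_{\Delta_{t}^{2}\Psi(u(0)|F(0)-u(0))}\bigl(\frac{F(t)-F(0)}{t}\bigr)$, followed by Attouch-type convergence of proximal operators under Mosco epi-convergence toward a proper limit. The only minor imprecision is that you invoke Attouch's theorem as an ``equivalence'' with continuity of the proximal maps, whereas only the forward implication is needed (Mosco epi-convergence implies strong convergence of $\mathrm{prox}$ at a fixed argument, which combined with nonexpansiveness handles the strongly convergent arguments $\frac{F(t)-F(0)}{t}\to F'(0)$), and properness of the epi-limit, guaranteed by hypothesis~(iii), is exactly what makes this applicable.
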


\subsection{Reminders on differential geometry}
Let $d\in\mathbb{N}^{*}$ be a positive integer, $\Omega$ be a nonempty bounded connected open subset of~$\R^{d}$ with a Lipschitz boundary $\Gamma :=\partial{\Omega}$ and $\boldsymbol{\mathrm{\nn}}$ be the outward-pointing unit normal vector to $\Gamma$. In the whole paper we denote by $\mathcal{C}^{\infty}_0(\Omega)$ the set of functions that are infinitely differentiable with
compact support in $\Omega$, by $\mathcal{C}^{\infty}_0(\Omega)'$ the set of distributions on $\Omega$, for $(m,p) \in \mathbb{N}\times\mathbb{N}^*$, by~$\mathrm{W}^{m,p}(\Omega)$, $\LL^{2}(\Gamma)$, $\HH^{1/2}(\Gamma)$, $\HH^{-1/2}(\Gamma)$, the usual Lebesgue and Sobolev spaces endowed with their standard norms, and we denote by $\HH^{m}(\Omega):=\mathrm{W}^{m,2}(\Omega)$ and by~$\HH_{\mathrm{div}}(\Omega):= \{ \boldsymbol{w}\in ( \mathrm{L}^{2}(\Omega) )^{d} \mid \mathrm{div} (\boldsymbol{w})\in \mathrm{L}^{2}(\Omega) \}$. The next proposition, known as \textit{divergence formula}, can be found in~\cite[Theorem 4.4.7 p.104]{ALLNUM}.

\begin{myProp}[Divergence formula]\label{divergenceformula}
If $\boldsymbol{w}\in \HH_{\mathrm{div}}(\Omega)$,
then $\boldsymbol{w}$ admits a normal trace, denoted by~$\boldsymbol{w}\cdot \boldsymbol{\mathrm{\nn}} \in \HH^{-1/2}(\Gamma)$, satisfying
$$
\displaystyle\int_{\Omega}\mathrm{div}(\boldsymbol{w})v+\int_{\Omega}\boldsymbol{w}\cdot\nabla v=\dual{\boldsymbol{w}\cdot \boldsymbol{\mathrm{\nn}}}{v}_{\HH^{-1/2}(\Gamma)\times \HH^{1/2}(\Gamma)}, \qquad\forall v \in \HH^{1}(\Omega).
$$
\end{myProp}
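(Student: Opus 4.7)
The plan is to construct $\boldsymbol{w}\cdot\boldsymbol{\mathrm{\nn}}\in\HH^{-1/2}(\Gamma)$ by taking the stated identity as its very definition, via duality against $\HH^{1/2}(\Gamma)$, and then to check well-posedness and continuity. The key ingredients are (i) the classical Gauss--Green formula for smooth vector fields on a Lipschitz domain, (ii) the density of $\mathcal{C}^{\infty}(\overline{\Omega})^{d}$ in $\HH_{\mathrm{div}}(\Omega)$, and (iii) the surjectivity of the trace operator $\gamma_{0}\colon \HH^{1}(\Omega)\to \HH^{1/2}(\Gamma)$ together with the existence of a bounded right-inverse.

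First, I would introduce the linear form
$$
L_{\boldsymbol{w}}\colon v\in\HH^{1}(\Omega)\;\longmapsto\;\int_{\Omega}\mathrm{div}(\boldsymbol{w})\,v+\int_{\Omega}\boldsymbol{w}\cdot\nabla v\in\R,
$$
whose continuity on $\HH^{1}(\Omega)$ is immediate from Cauchy--Schwarz, with norm controlled by $\|\boldsymbol{w}\|_{\HH_{\mathrm{div}}(\Omega)}$. The next step is to show that $L_{\boldsymbol{w}}$ vanishes on $\HH^{1}_{0}(\Omega)=\ker\gamma_{0}$. For $\boldsymbol{w}\in\mathcal{C}^{\infty}(\overline{\Omega})^{d}$ and $v\in\mathcal{C}^{\infty}_{0}(\Omega)$ this reduces to classical integration by parts, and density of $\mathcal{C}^{\infty}_{0}(\Omega)$ in $\HH^{1}_{0}(\Omega)$ together with the density stated in (ii) extends $L_{\boldsymbol{w}}(v)=0$ to arbitrary $\boldsymbol{w}\in\HH_{\mathrm{div}}(\Omega)$ and $v\in\HH^{1}_{0}(\Omega)$.

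Consequently $L_{\boldsymbol{w}}$ factors through the quotient $\HH^{1}(\Omega)/\HH^{1}_{0}(\Omega)$, which is isomorphic via $\gamma_{0}$ to $\HH^{1/2}(\Gamma)$. The induced continuous linear form on $\HH^{1/2}(\Gamma)$ is by definition $\boldsymbol{w}\cdot\boldsymbol{\mathrm{\nn}}\in \HH^{-1/2}(\Gamma)$. By construction $L_{\boldsymbol{w}}(v)$ depends only on $\gamma_{0}v$ and coincides with the pairing $\dual{\boldsymbol{w}\cdot \boldsymbol{\mathrm{\nn}}}{v}_{\HH^{-1/2}(\Gamma)\times \HH^{1/2}(\Gamma)}$, yielding the identity for every $v\in\HH^{1}(\Omega)$. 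Continuity of $\boldsymbol{w}\cdot\boldsymbol{\mathrm{\nn}}$ as an element of $\HH^{-1/2}(\Gamma)$ is then obtained by estimating $|L_{\boldsymbol{w}}(v)|\le \|\boldsymbol{w}\|_{\HH_{\mathrm{div}}(\Omega)}\|v\|_{\HH^{1}(\Omega)}$ and taking the infimum over all liftings of a given trace, which requires precisely the bounded right-inverse of (iii).

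The main obstacle is step (ii): the density of smooth vector fields in $\HH_{\mathrm{div}}(\Omega)$ on a merely Lipschitz domain. The standard route relies on a partition of unity subordinate to a finite cover of $\overline{\Omega}$, a local straightening of $\Gamma$ into a hyperplane, and a translation/mollification along the normal direction in order to bring each local piece to a smooth field on all of $\R^{d}$; the Lipschitz regularity of $\Gamma$ is exactly what is needed to control the pull-back of the divergence under the local diffeomorphism. The existence of a bounded right-inverse in (iii) is a standard byproduct of the trace theorem, for instance via a harmonic lifting or Gagliardo's extension.
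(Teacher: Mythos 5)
Your construction is correct and is the standard one: the paper gives no proof of this proposition, citing it instead from \cite[Theorem 4.4.7 p.104]{ALLNUM}, where the normal trace is obtained by precisely this duality argument (define $L_{\boldsymbol{w}}$, check it kills $\HH^1_0(\Omega)$, factor through the trace map and use its bounded right inverse). One minor simplification: $L_{\boldsymbol{w}}$ vanishes on $\mathcal{C}^{\infty}_0(\Omega)$ directly from the definition of the distributional divergence of $\boldsymbol{w}$, so the density of smooth vector fields in $\HH_{\mathrm{div}}(\Omega)$ — the only delicate ingredient you flag on a Lipschitz domain — is not actually needed for this step.
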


The following propositions will be useful and their proofs can be found in~\cite{HENROT}.

\begin{myProp}\label{divergencelaplacien}
Let $\boldsymbol{V}\in\mathcal{C}^{1}(\R^{d},\R^{d}) \cap \mathrm{W}^{1,\infty}(\R^{d},\R^{d})$ and $v\in\HH^{1}(\Omega)$ such that $\Delta v\in\LL^{2}(\Omega)$. Then the equality
$$
\Delta\left(\boldsymbol{V}\cdot\nabla{v}\right)=\mathrm{div}\left(\left(\Delta v\right)\boldsymbol{V}-\mathrm{div}(\boldsymbol{V})\nabla{v}+(\nabla{\boldsymbol{V}}+\nabla{\boldsymbol{V}}^{\top})\nabla{v} \right),
$$
holds true in $\mathcal{C}^{\infty}_0(\Omega)'$.
\end{myProp}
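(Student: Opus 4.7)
The identity only involves first derivatives of $v$ together with $\Delta v$ on the right-hand side, so under the hypotheses both sides are well-defined distributions on $\Omega$: the inclusion $\boldsymbol{V}\in\mathrm{W}^{1,\infty}(\R^d,\R^d)$ guarantees that $\boldsymbol{V}\cdot\nabla v\in\LL^2(\Omega)$, whose distributional Laplacian makes sense, and each of the three summands forming the argument of the divergence on the right-hand side belongs to $(\LL^2(\Omega))^d$. The plan is therefore to first verify the identity pointwise when $v$ is smooth, where everything is justified by elementary calculus, and then to extend it to the general case by mollification.

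For the smooth case, I would take $v\in\mathcal{C}^\infty(\Omega)$, write $\boldsymbol{V}=(V_1,\dots,V_d)$, and compute both sides coordinate-wise. Expanding the left-hand side and using the symmetry of second partial derivatives of $v$ yields
$$
\Delta(\boldsymbol{V}\cdot\nabla v)=\sum_i(\Delta V_i)\,\partial_iv+2\sum_{i,j}(\partial_jV_i)(\partial_i\partial_jv)+\boldsymbol{V}\cdot\nabla(\Delta v).
$$
On the right-hand side the product rule gives, term by term, $\mathrm{div}((\Delta v)\boldsymbol{V})=\boldsymbol{V}\cdot\nabla(\Delta v)+(\Delta v)\,\mathrm{div}\boldsymbol{V}$, $-\mathrm{div}((\mathrm{div}\boldsymbol{V})\nabla v)=-\nabla(\mathrm{div}\boldsymbol{V})\cdot\nabla v-(\Delta v)\,\mathrm{div}\boldsymbol{V}$, and, after using again the symmetry of the Hessian of $v$ to combine the contributions of $\nabla\boldsymbol{V}$ and $\nabla\boldsymbol{V}^\top$, $\mathrm{div}((\nabla\boldsymbol{V}+\nabla\boldsymbol{V}^\top)\nabla v)=\nabla(\mathrm{div}\boldsymbol{V})\cdot\nabla v+\sum_i(\Delta V_i)\partial_iv+2\sum_{i,j}(\partial_jV_i)(\partial_i\partial_jv)$. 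Summing the three expressions, the terms $(\Delta v)\,\mathrm{div}\boldsymbol{V}$ cancel between the first two, the terms $\nabla(\mathrm{div}\boldsymbol{V})\cdot\nabla v$ cancel between the second and third, and what remains is exactly the left-hand side.

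To extend the identity to $v\in\HH^1(\Omega)$ with $\Delta v\in\LL^2(\Omega)$, I would argue by regularization. Fix any $\varphi\in\mathcal{C}^\infty_0(\Omega)$, pick a relatively compact open set $\omega$ with $\mathrm{supp}(\varphi)\subset\omega\Subset\Omega$, and set $v_\varepsilon:=\rho_\varepsilon*v$ for a standard mollifier $\rho_\varepsilon$. Then $v_\varepsilon\in\mathcal{C}^\infty(\omega)$ for $\varepsilon$ small enough, $v_\varepsilon\to v$ in $\HH^1(\omega)$, and $\Delta v_\varepsilon=\rho_\varepsilon*\Delta v\to \Delta v$ in $\LL^2(\omega)$. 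Applying the smooth identity to $v_\varepsilon$ and pairing with $\varphi$, that is, integrating the left-hand side against $\Delta\varphi$ and the right-hand side against $-\nabla\varphi$ (both integrations by parts being legitimate since $\varphi$ has compact support in $\omega$), one then passes to the limit $\varepsilon\to 0$ using the above convergences together with $\boldsymbol{V},\nabla\boldsymbol{V}\in\mathrm{L}^\infty$, which proves the desired equality in $\mathcal{C}^\infty_0(\Omega)'$.

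The only delicate part is the bookkeeping in the smooth computation: one must check that $(\Delta v)\,\mathrm{div}\boldsymbol{V}$ cancels between the first two divergences, that $\nabla(\mathrm{div}\boldsymbol{V})\cdot\nabla v$ cancels between the last two, and that the coefficient $2$ in front of $\sum_{i,j}(\partial_jV_i)(\partial_i\partial_jv)$ on the left is accounted for on the right precisely by the contribution of the symmetrized matrix $\nabla\boldsymbol{V}+\nabla\boldsymbol{V}^\top$ paired with the symmetric Hessian of $v$. Once the smooth identity is secured, the extension step is routine because the right-hand side is a divergence of an $\LL^2_{\mathrm{loc}}$ vector field involving only $\nabla v$ and $\Delta v$, which are precisely the quantities whose mollifications converge in $\LL^2_{\mathrm{loc}}(\Omega)$ under the stated hypotheses.
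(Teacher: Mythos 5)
Your argument is correct, and in fact the paper does not prove this proposition at all: it simply refers to the book of Henrot--Pierre (reference \cite{HENROT}), where the identity is established by essentially the same direct computation you perform. Your two-step scheme (verify the pointwise identity for smooth $v$, then pass to $v\in\HH^{1}(\Omega)$ with $\Delta v\in\LL^{2}(\Omega)$ by mollifying locally and pairing the weak formulation with a fixed $\varphi\in\mathcal{C}^{\infty}_0(\Omega)$) is the standard and complete way to justify the statement, and your bookkeeping of the cancellations (the terms $(\Delta v)\,\mathrm{div}\boldsymbol{V}$ and $\nabla(\mathrm{div}\boldsymbol{V})\cdot\nabla v$, and the factor $2$ coming from $\nabla\boldsymbol{V}+\nabla\boldsymbol{V}^{\top}$ paired with the symmetric Hessian) checks out with the paper's convention $(\nabla\boldsymbol{V})_{ij}=\partial_j V_i$. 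The one point you should make explicit is that in your ``smooth case'' you only smooth $v$, while the intermediate expressions $\Delta V_i$ and $\nabla(\mathrm{div}\boldsymbol{V})$ do not exist classically for $\boldsymbol{V}\in\mathcal{C}^{1}\cap\mathrm{W}^{1,\infty}$; this is harmless, but it needs a sentence: either interpret those terms as distributions multiplied by the smooth functions $\partial_i v$, $\nabla v$ (the Leibniz rule is valid in that setting, and the second-order terms in $\boldsymbol{V}$ cancel before any regularity of $\boldsymbol{V}$ beyond $\mathcal{C}^{1}$ is used), or mollify $\boldsymbol{V}$ as well and note that, after the cancellation, the weak formulation contains only $\nabla\boldsymbol{V}_\delta$, which converges locally uniformly to $\nabla\boldsymbol{V}$, so the extra limit passage is immediate. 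With that remark added, the proof is complete and self-contained, which is arguably a small improvement over the paper's bare citation.
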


\begin{myProp}\label{intbord}
Assume that~$\Gamma$ is of class $\mathcal{C}^2$ and let $\boldsymbol{V} \in \mathcal{C}^{1}(\R^{d},\R^{d})$. It holds that
\begin{equation*}
    \int_{\Gamma}(\boldsymbol{V}\cdot\nabla{v}+v\mathrm{div}_{\Gamma}(\boldsymbol{V}))=\int_{\Gamma}\boldsymbol{V}\cdot\boldsymbol{\nn}(\partial_{\nn}v+Hv), \qquad\forall v \in \mathrm{W}^{2,1}(\Omega),
\end{equation*}
where $\mathrm{div}_{\Gamma}(\boldsymbol{V}):=\mathrm{div}(\boldsymbol{V})-(\nabla{ \boldsymbol{V}}\boldsymbol{\nn} \cdot \boldsymbol{\nn}) \in \LL^\infty(\Gamma)$ is the {tangential divergence} of $\boldsymbol{V}$, $\partial_\nn v := \nabla v \cdot \boldsymbol{\mathrm{\nn}} \in \LL^1(\Gamma)$ is the normal derivative of~$v$, and $H$ stands for the \textit{mean curvature} of $\Gamma$.
\end{myProp}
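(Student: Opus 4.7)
The plan is to reduce the claimed identity to the \emph{tangential Stokes formula} on a closed $\mathcal{C}^{2}$ hypersurface, which asserts that for every $\boldsymbol{W}\in\mathcal{C}^{1}(\R^{d},\R^{d})$,
\begin{equation*}
\int_{\Gamma}\mathrm{div}_{\Gamma}(\boldsymbol{W})=\int_{\Gamma}H\,\boldsymbol{W}\cdot\boldsymbol{\nn}.
\end{equation*}
This is the standard starting point and can be obtained by flattening the boundary locally via $\mathcal{C}^{2}$-charts and using the surface divergence theorem; it requires only $\Gamma\in\mathcal{C}^{2}$ so that $H\in\LL^{\infty}(\Gamma)$ is well defined. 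Once this identity is available, one applies it to the product vector field $\boldsymbol{W}:=v\boldsymbol{V}$ and rewrites both sides.

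For the left-hand side, I would use the definition $\mathrm{div}_{\Gamma}(\boldsymbol{W})=\mathrm{div}(\boldsymbol{W})-\nabla\boldsymbol{W}\boldsymbol{\nn}\cdot\boldsymbol{\nn}$. The product rule gives $\mathrm{div}(v\boldsymbol{V})=v\,\mathrm{div}(\boldsymbol{V})+\nabla v\cdot\boldsymbol{V}$ and $\nabla(v\boldsymbol{V})\boldsymbol{\nn}\cdot\boldsymbol{\nn}=v(\nabla\boldsymbol{V}\boldsymbol{\nn}\cdot\boldsymbol{\nn})+(\partial_{\nn}v)(\boldsymbol{V}\cdot\boldsymbol{\nn})$, so that
\begin{equation*}
\mathrm{div}_{\Gamma}(v\boldsymbol{V})=v\,\mathrm{div}_{\Gamma}(\boldsymbol{V})+\boldsymbol{V}\cdot\nabla v-(\boldsymbol{V}\cdot\boldsymbol{\nn})\partial_{\nn}v.
\end{equation*}
Integrating over $\Gamma$ and inserting the tangential Stokes formula $\int_{\Gamma}\mathrm{div}_{\Gamma}(v\boldsymbol{V})=\int_{\Gamma}Hv(\boldsymbol{V}\cdot\boldsymbol{\nn})$ on the right-hand side, I then rearrange to obtain exactly
\begin{equation*}
\int_{\Gamma}(\boldsymbol{V}\cdot\nabla v+v\,\mathrm{div}_{\Gamma}(\boldsymbol{V}))=\int_{\Gamma}\boldsymbol{V}\cdot\boldsymbol{\nn}(\partial_{\nn}v+Hv),
\end{equation*}
which is the claimed identity.

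The main technical issue is the low regularity assumption $v\in\mathrm{W}^{2,1}(\Omega)$, since the tangential Stokes formula above is naturally stated for smooth data. I would handle this by a density argument: if $v\in\mathcal{C}^{\infty}(\overline{\Omega})$, the computation above is rigorous, and both sides of the claimed equality are continuous with respect to $v$ in the $\mathrm{W}^{2,1}(\Omega)$-norm, thanks to the continuity of the trace operators $\mathrm{W}^{2,1}(\Omega)\to\LL^{1}(\Gamma)$ for $v$ and for $\nabla v$ (which is available because $\Gamma$ is Lipschitz, indeed $\mathcal{C}^{2}$, and $\boldsymbol{V}\in\mathcal{C}^{1}(\R^{d},\R^{d})$ so $\boldsymbol{V},\boldsymbol{\nn},H,\mathrm{div}_{\Gamma}\boldsymbol{V}\in\LL^{\infty}(\Gamma)$). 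A standard approximation of $v$ by smooth functions in $\mathrm{W}^{2,1}(\Omega)$ then extends the identity to the general case, completing the proof.
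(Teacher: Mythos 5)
Your proof is correct, and it follows essentially the same route as the standard argument in the reference \cite{HENROT} to which the paper defers (the paper itself gives no proof): apply the tangential Stokes formula $\int_{\Gamma}\mathrm{div}_{\Gamma}(\boldsymbol{W})=\int_{\Gamma}H\,\boldsymbol{W}\cdot\boldsymbol{\nn}$ to $\boldsymbol{W}=v\boldsymbol{V}$, expand $\mathrm{div}_{\Gamma}(v\boldsymbol{V})$ by the product rule, and conclude by density of smooth functions in $\mathrm{W}^{2,1}(\Omega)$ together with continuity of the traces of $v$ and $\nabla v$ in $\LL^{1}(\Gamma)$. Your product-rule identity and the limiting argument are accurate, so no gap remains beyond the standard tangential Stokes formula you invoke, which indeed holds for closed $\mathcal{C}^{2}$ boundaries.
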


\begin{myProp}\label{beltrami}
Assume that $\Gamma$ is of class $\mathcal{C}^2$ and let $w\in\HH^{3}(\Omega)$. It holds that 
\begin{equation*}\label{prop291}
    \Delta w=\Delta_{\Gamma}w+H\partial_{\nn}w+\frac{\partial^2 w}{\partial \mathrm{n}^2} \qquad \text{\textit{a.e.}\ on } \Gamma,
\end{equation*}
where $\Delta_{\Gamma}w\in\LL^{2}(\Gamma)$ stands for the \textit{Laplace-Beltrami operator} of~$w$ (see, e.g.,~\cite[Definition~5.4.11 p.196]{HENROT}), and $\frac{\partial^2 w}{\partial \mathrm{n}^2}:=\mathrm{D}^{2}(w) \boldsymbol{\mathrm{\nn}} \cdot \boldsymbol{\mathrm{\nn}} \in \LL^2(\Gamma)$, where $\mathrm{D}^{2}(w)$ stands for the Hessian matrix of $w$. Moreover it holds that
\begin{equation*}
\int_{\Gamma}v\Delta_{\Gamma}w=-\int_{\Gamma}\nabla_{\Gamma}v\cdot\nabla_{\Gamma}w, \qquad \forall v \in \HH^{2}(\Omega),
\end{equation*}
where $\nabla_{\Gamma}v:=\nabla{v}-(\partial_{\nn}v)\boldsymbol{\nn} \in \HH^{1/2}(\Gamma,\R^{d})$ stands for the \textit{tangential gradient} of~$v$.
\end{myProp}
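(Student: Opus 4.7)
The statement bundles two classical differential-geometric identities on the hypersurface $\Gamma$; I would treat them in sequence.

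\textbf{Decomposition of the Laplacian.} The plan is to write $\Delta w = \mathrm{tr}(\mathrm{D}^{2}w)$ and evaluate the trace in an orthonormal frame $\{\boldsymbol{\tau}_{1},\ldots,\boldsymbol{\tau}_{d-1},\boldsymbol{\nn}\}$ adapted to $\Gamma$. The normal--normal Hessian entry is, by definition, $\frac{\partial^{2}w}{\partial \mathrm{n}^{2}}$, while the sum of the tangential--tangential entries equals $\Delta_{\Gamma}w + H\partial_{\nn}w$, with the mean-curvature correction arising from the second fundamental form when one passes from ambient second derivatives to the intrinsic covariant ones on $\Gamma$. I would first prove the identity for $w\in\mathcal{C}^{\infty}(\overline{\Omega})$ by extending $\boldsymbol{\nn}$ to a tubular neighborhood of $\Gamma$ via the signed distance function (available because $\Gamma$ is $\mathcal{C}^{2}$), decomposing $\nabla w = (\partial_{\nn}w)\boldsymbol{\nn} + \nabla_{\Gamma}w$ near the boundary, and computing the divergence with the classical identity $\mathrm{div}(\boldsymbol{\nn})|_{\Gamma} = H$. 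Extension to $w \in \HH^{3}(\Omega)$ then follows by density, using the continuity of the Sobolev trace operators that control $w$ and its derivatives up to order two in $\LL^{2}(\Gamma)$.

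\textbf{Tangential Green formula.} The cleanest route is to apply Proposition~\ref{intbord} with $\boldsymbol{V}$ a smooth extension of $\nabla_{\Gamma}w$ off $\Gamma$. By construction $\boldsymbol{V}$ is tangential on $\Gamma$, so $\boldsymbol{V}\cdot\boldsymbol{\nn} = 0$; moreover $\mathrm{div}_{\Gamma}(\boldsymbol{V}) = \Delta_{\Gamma}w$ follows from the defining relation $\Delta_{\Gamma} = \mathrm{div}_{\Gamma}\nabla_{\Gamma}$, and for any $v \in \HH^{2}(\Omega)$ the tangentiality of $\boldsymbol{V}$ yields $\boldsymbol{V}\cdot\nabla v = \nabla_{\Gamma}v\cdot\nabla_{\Gamma}w$ on $\Gamma$. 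Proposition~\ref{intbord} then reduces to $\int_{\Gamma}(\nabla_{\Gamma}v\cdot\nabla_{\Gamma}w + v\Delta_{\Gamma}w) = 0$, since the right-hand side of that proposition vanishes, which is exactly the claimed identity.

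\textbf{Main obstacle.} The delicate point is regularity bookkeeping. Proposition~\ref{intbord} is stated for $\boldsymbol{V}\in\mathcal{C}^{1}(\R^{d},\R^{d})$, whereas $\nabla_{\Gamma}w$ for $w\in\HH^{3}(\Omega)$ has only $\HH^{3/2}(\Gamma)$ regularity, and similarly for the Hessian traces appearing in the first part. Two routes are available: either first establish both identities for $w,v\in\mathcal{C}^{\infty}(\overline{\Omega})$ and then pass to the limit in $\LL^{2}(\Gamma)$ using the continuity of the trace maps $\HH^{3}(\Omega)\to\HH^{3/2}(\Gamma)$ and $\HH^{2}(\Omega)\to\HH^{1/2}(\Gamma)$ combined with density of smooth functions, or directly invoke a Sobolev version of Proposition~\ref{intbord}. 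The first route is essentially automatic once the $\mathcal{C}^{\infty}(\overline{\Omega})$ case is in hand, so that is the path I would follow.
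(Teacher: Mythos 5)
Your proposal is correct and follows essentially the same route as the paper's source for this statement: the paper gives no proof here but refers to~\cite{HENROT}, where the argument is precisely the one you sketch (decompose $\nabla w$ into normal and tangential parts using the signed-distance extension of $\boldsymbol{\nn}$ with $\mathrm{div}(\boldsymbol{\nn})=H$ on $\Gamma$, obtain the tangential Green formula by applying the boundary divergence identity of Proposition~\ref{intbord} to a $\mathcal{C}^{1}$ extension of the tangential field $\nabla_{\Gamma}w$ so that the $\boldsymbol{V}\cdot\boldsymbol{\nn}$ terms vanish, and conclude for $w\in\HH^{3}(\Omega)$, $v\in\HH^{2}(\Omega)$ by density and trace continuity). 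The regularity bookkeeping you flag is indeed routine: once the smooth case is settled, $\Delta_{\Gamma}w_{k}$ converges in $\LL^{2}(\Gamma)$ because the first identity expresses it as $\Delta w_{k}-H\partial_{\nn}w_{k}-\frac{\partial^{2}w_{k}}{\partial \mathrm{n}^{2}}$, i.e.\ traces of $\HH^{1}(\Omega)$ quantities multiplied by the bounded curvature, so both identities pass to the limit as you indicate.
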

\subsection{Reminders on three basic nonlinear boundary value problems}\label{boundary}
As mentioned in Introduction, the major part of the present work consists in performing the sensitivity analysis of a scalar Tresca friction problem with respect to shape perturbation. To this aim three classical boundary value problems will be involved: a Neumann problem, a scalar Signorini problem and, of course, a scalar Tresca friction problem. Our aim in this subsection is to recall basic notions and results concerning these three boundary value problems for the reader's convenience. Since the proofs are very similar to the ones detailed in our paper~\cite{4ABC}, they will be omitted here.

Let $d\in\mathbb{N}^{*}$ be a positive integer and $\Omega$ be a nonempty bounded connected open subset of~$\R^{d}$ with a Lipschitz continuous boundary $\Gamma :=\partial{\Omega}$. Consider also $h\in\LL^{2}(\Omega)$, $k\in\LL^{2}(\Omega)$, $\ell\in\LL^{2}(\Gamma)$, $w\in\HH^{1/2}(\Gamma)$ and~$\mathrm{M}\in\LL^{\infty}(\Omega,\R^{d \times d})$ satisfying
$$ h\geq \alpha \text{ \textit{a.e.} on }\Omega	 \qquad \text{and} \qquad
 \mathrm{M}(x)y\cdot y\geq \gamma \|y \|^{2}, \quad \forall y \in \R^d,
 $$
for some $\alpha>0$, $\gamma>0$, where $M(x)$ is a symmetric matrix for almost every~$x \in \Omega$, and where~$\Vert \cdot \Vert$ stands for the usual Euclidean norm of~$\R^d$. From those assumptions, note that the map 
$$
\displaystyle\fonction{\dual{\cdot}{\cdot}_{\mathrm{M},h}}{\HH^{1}(\Omega)\times\HH^{1}(\Omega)}{\R}{(v_1,v_2)}{\displaystyle \dual{v_1}{v_2}_{\mathrm{M},h}:=\int_{\Omega} \mathrm{M}\nabla{v_1}\cdot\nabla{v_2}+\int_{\Omega}v_1 v_2 h,}
$$
is a scalar product on $\HH^{1}(\Omega)$. 

\subsubsection{A Neumann problem}
Consider the Neumann problem given by
\begin{equation} \label{PbDirichletArt2}\tag{NP}
\arraycolsep=2pt
\left\{
\begin{array}{rcll}
-\mathrm{div}(\mathrm{M}\nabla{F})+Fh  & = & k   & \text{ in } \Omega , \\
\mathrm{M}\nabla{F}\cdot\boldsymbol{\nn} & = & \ell  & \text{ on } \Gamma,\\
\end{array}
\right.
\end{equation}
where the data have been introduced at the beginning of Subsection~\ref{boundary}.

\begin{myDefn}[Solution to the Neumann problem]
A (strong) solution to the Neumann problem~\eqref{PbDirichletArt2} is a function $F\in\HH^{1}(\Omega)$ such that $-\mathrm{div}(\mathrm{M}\nabla{F})+Fh=k $ in $\mathcal{C}^{\infty}_0(\Omega)'$ and $\mathrm{M}\nabla{F}\cdot\boldsymbol{\nn}\in \LL^{2}(\Gamma)$ with $\mathrm{M}\nabla{F}\cdot\boldsymbol{\nn}=\ell$ \textit{a.e.} on $\Gamma$.
\end{myDefn}

\begin{myDefn}[Weak solution to the Neumann problem]\label{SolufaibleDN}
A weak solution to the Neumann problem~\eqref{PbDirichletArt2} is a function $F\in\HH^{1}(\Omega)$ such that
\begin{equation*}
    \displaystyle\int_{\Omega}\mathrm{M}\nabla{ F}\cdot\nabla{ v}+\int_{\Omega}Fvh=\int_{\Omega}kv+\int_{\Gamma}\ell v, \qquad \forall v\in\HH^{1}(\Omega).
\end{equation*}
\end{myDefn}
\begin{myProp}\label{DNequiart}
A function $F\in\HH^{1}(\Omega)$ is a (strong) solution to the Neumann problem~\eqref{PbDirichletArt2} if and only if $F$ is a weak solution to the Neumann problem~\eqref{PbDirichletArt2}.
\end{myProp}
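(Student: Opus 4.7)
The plan is to prove the equivalence by showing each implication via the divergence formula (Proposition~\ref{divergenceformula}) applied to the vector field $\boldsymbol{w} := \mathrm{M}\nabla F$, together with the classical density/surjectivity arguments for test functions.

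For the direction \emph{strong $\Rightarrow$ weak}, I start by observing that if $F\in\HH^1(\Omega)$ is a strong solution, then $\mathrm{M}\nabla F\in(\LL^2(\Omega))^d$ and $\mathrm{div}(\mathrm{M}\nabla F)=Fh-k\in\LL^2(\Omega)$, so that $\mathrm{M}\nabla F\in\HH_{\mathrm{div}}(\Omega)$. Proposition~\ref{divergenceformula} then yields, for every $v\in\HH^1(\Omega)$,
\[
\int_{\Omega}\mathrm{div}(\mathrm{M}\nabla F)\,v+\int_{\Omega}\mathrm{M}\nabla F\cdot\nabla v=\dual{\mathrm{M}\nabla F\cdot\boldsymbol{\nn}}{v}_{\HH^{-1/2}(\Gamma)\times\HH^{1/2}(\Gamma)}.
\]
Since the strong formulation gives $\mathrm{M}\nabla F\cdot\boldsymbol{\nn}=\ell\in\LL^2(\Gamma)$ \textit{a.e.} on $\Gamma$, the right-hand side reduces to $\int_{\Gamma}\ell v$, and substituting $\mathrm{div}(\mathrm{M}\nabla F)=Fh-k$ on the left gives exactly the identity of Definition~\ref{SolufaibleDN}.

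For the converse \emph{weak $\Rightarrow$ strong}, I proceed in two steps. First, I restrict the weak formulation to test functions $v\in\mathcal{C}^{\infty}_0(\Omega)$, for which the boundary integral vanishes; this immediately yields $-\mathrm{div}(\mathrm{M}\nabla F)+Fh=k$ in $\mathcal{C}^{\infty}_0(\Omega)'$, which in particular forces $\mathrm{div}(\mathrm{M}\nabla F)\in\LL^2(\Omega)$ and hence $\mathrm{M}\nabla F\in\HH_{\mathrm{div}}(\Omega)$. Second, I apply Proposition~\ref{divergenceformula} to $\mathrm{M}\nabla F$ for an arbitrary $v\in\HH^1(\Omega)$, use the identity $\mathrm{div}(\mathrm{M}\nabla F)=Fh-k$ just established, and subtract the weak formulation to obtain
\[
\dual{\mathrm{M}\nabla F\cdot\boldsymbol{\nn}}{v}_{\HH^{-1/2}(\Gamma)\times\HH^{1/2}(\Gamma)}=\int_{\Gamma}\ell v, \qquad \forall v\in\HH^1(\Omega).
\]
By the surjectivity of the trace operator $\HH^1(\Omega)\to\HH^{1/2}(\Gamma)$, this implies the equality $\mathrm{M}\nabla F\cdot\boldsymbol{\nn}=\ell$ in $\HH^{-1/2}(\Gamma)$, and since $\ell\in\LL^2(\Gamma)$ the normal trace $\mathrm{M}\nabla F\cdot\boldsymbol{\nn}$ itself belongs to $\LL^2(\Gamma)$ and equals $\ell$ \textit{a.e.} on $\Gamma$, which is precisely the strong formulation.

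The only delicate point is the second step of the converse, namely identifying the $\HH^{-1/2}$-valued normal trace with the $\LL^2$-datum $\ell$: this relies crucially on the surjectivity of the trace map so that the equality of the two linear forms on $\HH^{1/2}(\Gamma)$ transfers to an equality in $\HH^{-1/2}(\Gamma)$, after which the $\LL^2$-regularity of $\ell$ upgrades it to an \textit{a.e.}\ pointwise equality on $\Gamma$. Everything else is a direct application of the divergence formula and elementary manipulations on the weak variational identity.
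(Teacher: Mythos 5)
Your proof is correct and is exactly the standard argument that the paper itself omits (it defers to~\cite{4ABC}): strong~$\Rightarrow$~weak by applying the divergence formula to $\mathrm{M}\nabla F\in\HH_{\mathrm{div}}(\Omega)$, and weak~$\Rightarrow$~strong by first testing against $\mathcal{C}^{\infty}_0(\Omega)$ to recover the PDE, then using the divergence formula and the surjectivity of the trace operator to identify the $\HH^{-1/2}(\Gamma)$ normal trace with $\ell\in\LL^{2}(\Gamma)$. The only implicit point, shared with the paper's own setting, is that $Fh-k$ must lie in $\LL^{2}(\Omega)$ so that $\mathrm{M}\nabla F\in\HH_{\mathrm{div}}(\Omega)$, which holds in the situations where the proposition is applied (e.g.\ $h=\mathrm{J}_t\in\LL^{\infty}$).
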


From the assumptions on $\mathrm{M}$ and $h$ and using the Riesz representation theorem, one can easily get the following existence/uniqueness result.

\begin{myProp}\label{existenceunicitéDN}
The Neumann problem~\eqref{PbDirichletArt2} possesses a unique (strong) solution $F\in \HH^{1}(\Omega).$
\end{myProp}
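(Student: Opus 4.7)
The plan is to invoke the Riesz representation theorem on the Hilbert space $\HH^{1}(\Omega)$ endowed with the inner product $\dual{\cdot}{\cdot}_{\mathrm{M},h}$ introduced just before the statement. Thanks to Proposition~\ref{DNequiart}, it is enough to prove the existence and uniqueness of a \emph{weak} solution to~\eqref{PbDirichletArt2}.

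First I would recast the weak formulation of Definition~\ref{SolufaibleDN} as the Riesz-type identification
$$
\dual{F}{v}_{\mathrm{M},h}=L(v), \qquad \forall v \in \HH^{1}(\Omega),
$$
where the linear form $L  :  \HH^{1}(\Omega)\to \R$ is defined by $L(v):=\int_{\Omega}kv+\int_{\Gamma}\ell v$. With this rewriting, the proposition becomes exactly the statement that $L$ can be represented via the inner product $\dual{\cdot}{\cdot}_{\mathrm{M},h}$.

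Next I would check that $(\HH^{1}(\Omega),\dual{\cdot}{\cdot}_{\mathrm{M},h})$ is indeed a Hilbert space. The coercivity assumption $\mathrm{M}(x)y\cdot y\geq \gamma \|y\|^{2}$ combined with $h\geq \alpha>0$ \textit{a.e.}\ yields
$$
\dual{v}{v}_{\mathrm{M},h}\geq \gamma \int_{\Omega}\|\nabla v\|^{2}+\alpha \int_{\Omega}v^{2}\geq \min(\gamma,\alpha)\,\|v\|_{\HH^{1}(\Omega)}^{2},
$$
so the norm induced by $\dual{\cdot}{\cdot}_{\mathrm{M},h}$ dominates the standard $\HH^{1}$-norm. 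Completeness then transfers from the classical completeness of $\HH^{1}(\Omega)$, using the $\LL^{\infty}$ bound on $\mathrm{M}$ and standard dominated-convergence arguments to handle the $h$-weighted term.

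Then I would verify the continuity of $L$ for either the standard or the weighted $\HH^{1}$-norm: the volume part is handled by Cauchy--Schwarz since $k\in \LL^{2}(\Omega)$, and the boundary part is handled by the continuity of the trace operator~$\HH^{1}(\Omega)\to \LL^{2}(\Gamma)$ combined with $\ell \in \LL^{2}(\Gamma)$. Applying the Riesz representation theorem yields a unique $F\in \HH^{1}(\Omega)$ satisfying the rewritten identity, hence a unique weak solution to~\eqref{PbDirichletArt2}, and Proposition~\ref{DNequiart} promotes it to the unique strong solution.

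The argument is essentially routine; the only mildly delicate point is the norm equivalence/completeness step when $h$ is merely in $\LL^{2}(\Omega)$, but the one-sided bound above is all that Riesz actually requires, so no real obstacle arises.
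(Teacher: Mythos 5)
Your proposal follows essentially the same route as the paper, which also obtains the result by rewriting the weak formulation of Definition~\ref{SolufaibleDN} as a Riesz identification for the scalar product $\dual{\cdot}{\cdot}_{\mathrm{M},h}$, checking continuity of the right-hand side functional via Cauchy--Schwarz and the trace operator, and then invoking Proposition~\ref{DNequiart} to pass from the weak to the strong solution. The only imprecision is your closing remark that the one-sided coercivity bound is ``all that Riesz actually requires'': Riesz also needs completeness of $(\HH^{1}(\Omega),\dual{\cdot}{\cdot}_{\mathrm{M},h})$, i.e.\ an upper bound $\dual{v}{v}_{\mathrm{M},h}\leq C\norm{v}_{\HH^{1}(\Omega)}^{2}$ as well, which the paper implicitly assumes (and which holds in its applications where $h$ is essentially bounded), so this does not constitute a genuine gap.
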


\subsubsection{A scalar Signorini problem}\label{SectionSignorinicasscalairesansu}
In this part we assume that $\Gamma$ is decomposed (up to a null set) as
$$
\Gamma_{\mathrm{N}}\cup\Gamma_{\mathrm{D}}\cup\Gamma_{S-}\cup\Gamma_{S+},
$$
where $\Gamma_{\mathrm{N}}$, $\Gamma_{\mathrm{D}}$, $\Gamma_{\mathrm{S-}}$ and $\Gamma_{\mathrm{S+}}$ are four measurable pairwise disjoint subsets of $\Gamma$. Consider the scalar Signorini problem given by
\begin{equation} \label{PbDirichletNeumannSigno}\tag{SP}
\arraycolsep=2pt
\left\{
\begin{array}{rcll}
-\Delta u +u & = &  k   & \text{ in } \Omega , \\
u & = & w  & \text{ on } \Gamma_{\mathrm{D}}, \\
\partial_{\nn} u & = & \ell  & \text{ on } \Gamma_{\mathrm{N}}, \\
 u\leq w\text{, } \partial_{\nn}u\leq \ell \text{ and} \left(u-w\right)\left(\partial_{\nn}u-\ell\right)	& = & 0  & \text{ on } \Gamma_{\mathrm{S-}}, \\
 u\geq w\text{, } \partial_{\nn}u\geq \ell \text{ and} \left(u-w\right)\left(\partial_{\nn}u-\ell\right)	& = & 0  & \text{ on } \Gamma_{\mathrm{S+}},
\end{array}
\right.
\end{equation}
where the data have been introduced at the beginning of Subsection~\ref{boundary}.

\begin{myDefn}[Solution to the scalar Signorini problem]
  A (strong) solution to the scalar Signorini problem~\eqref{PbDirichletNeumannSigno} is a function $u\in \HH^{1}(\Omega)$ such that $-\Delta u+u=f$ in $\mathcal{C}^{\infty}_0(\Omega)'$, $u=w$ \textit{a.e.} on~$\Gamma_{\mathrm{D}}$, and also~$\partial_{\nn}u\in \mathrm{L}^{2}(\Gamma)$ with $\partial_{\nn}u=\ell$ \textit{a.e.} on $\Gamma_{\mathrm{N}}$, $u\leq w$, $\partial_{\nn}u\leq \ell$ and $(u-w)(\partial_{\nn}u-\ell)=0$ \textit{a.e.} on $\Gamma_{\mathrm{S-}}$, $u\geq w$, $\partial_{\nn}u\geq \ell$ and~$(u-w)(\partial_{\nn}u-\ell)=0$ \textit{a.e.} on $\Gamma_{\mathrm{S+}}$.
\end{myDefn}

\begin{myDefn}[Weak solution to the scalar Signorini problem]\label{WeaksolutionSigno}
A weak solution to the scalar Signorini problem~\eqref{PbDirichletNeumannSigno} is a function $u\in\mathcal{K}^{1}_{w}(\Omega)$ such that 
\begin{equation*}
  \displaystyle\int_{\Omega}\nabla{u}\cdot\nabla(v-u)+\int_{\Omega}u(v-u)\geq\int_{\Omega}k(v-u)+\int_{\Gamma}\ell(v-u),\qquad \forall v\in\mathcal{K}^{1}_{w}(\Omega),
\end{equation*}
where $\mathcal{K}^{1}_{w}(\Omega)$ is the nonempty closed convex subset of $\HH^{1}(\Omega)$ defined by
$$
\mathcal{K}^{1}_{w}(\Omega) := \left\{v\in\HH^{1}(\Omega) \mid v\leq w \text{ \textit{a.e.} on } \Gamma_{\mathrm{S-}}\text{, } v=w \text{ \textit{a.e.} on }\Gamma_{\mathrm{D}} \text{ and } v\geq w \text{ \textit{a.e.} on } \Gamma_{\mathrm{S+}} \right \}.
$$
\end{myDefn}

One can easily prove that a (strong) solution to the scalar Signorini problem~\eqref{PbDirichletNeumannSigno} is also a weak solution. However, to the best of our knowledge, one cannot prove the converse without additional assumptions. To get the equivalence, one can assume, in particular, that the decomposition $\Gamma_{\mathrm{N}}\cup\Gamma_{\mathrm{D}}\cup\Gamma_{S-}\cup\Gamma_{S+}$ is \textit{consistent} in the following sense.

\begin{myDefn}[Consistent decomposition]\label{regulieresens2}
 The decomposition   $\Gamma_{\mathrm{N}}\cup\Gamma_{\mathrm{D}}\cup\Gamma_{S-}\cup\Gamma_{S+}$ is said to be \textit{consistent} if
 \begin{enumerate}[label={\rm (\roman*)}]
     \item For almost all $s\in\Gamma_{\mathrm{S-}}$ (resp.\ $\Gamma_{\mathrm{S+}}$),  $s\in\mathrm{int}_{\Gamma}(\Gamma_{{\mathrm{S-}}})$ (resp.\ $s\in\mathrm{int}_{\Gamma}(\Gamma_{{\mathrm{S+}}})$), where the notation~$\mathrm{int}_{\Gamma}$ stands for the interior relative to $\Gamma$;
     \item The nonempty closed convex subset $\mathcal{K}_{w}^{1/2}(\Gamma)$ of $\HH^{1/2}(\Gamma)$ defined by
    $$         
    \hspace{-0.2cm}\mathcal{K}_{w}^{1/2}(\Gamma):=\left \{ v\in \HH^{1/2}(\Gamma) \mid v\leq w \text{ \textit{a.e.} on } \Gamma_{\mathrm{S-}}\text{, } v=w \text{ \textit{a.e.} on }\Gamma_{\mathrm{D}} \text{ and } v\geq w \text{ \textit{a.e.}on } \Gamma_{\mathrm{S+}} \right \},
    $$
is dense in  the nonempty closed convex subset $\mathcal{K}_{w}^{0}(\Gamma)$ of $\mathrm{L}^{2}(\Gamma)$ defined by
     $$ \mathcal{K}_{w}^{0}(\Gamma):=\left \{ v\in \mathrm{L}^{2}(\Gamma) \mid v\leq w \text{ \textit{a.e.} on } \Gamma_{\mathrm{S-}}\text{, } v=w \text{ \textit{a.e.} on }\Gamma_{\mathrm{D}} \text{ and } v\geq w \text{ \textit{a.e.} on } \Gamma_{\mathrm{S+}} \right \}.
     $$
\end{enumerate}
\end{myDefn}

\begin{myProp}\label{EquiSignoCassansu}
Let $u\in \HH^{1}(\Omega)$.
\begin{enumerate}[label={\rm (\roman*)}]
    \item If $u$ is a (strong) solution to the scalar Signorini problem~\eqref{PbDirichletNeumannSigno}, then $u$ is a weak solution to the scalar Signorini problem~\eqref{PbDirichletNeumannSigno}.
    \item If $u$ is a weak solution to the scalar Signorini problem~\eqref{PbDirichletNeumannSigno} such that $\partial_{\nn}u\in \mathrm{L}^{2}(\Gamma)$ and the decomposition $\Gamma_{\mathrm{N}}\cup\Gamma_{\mathrm{D}}\cup\Gamma_{S-}\cup\Gamma_{S+}$ is consistent, then $u$ is a (strong) solution to the scalar Signorini problem~\eqref{PbDirichletNeumannSigno}.
\end{enumerate}
\end{myProp}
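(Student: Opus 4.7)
The plan is to prove the two implications separately, mirroring the standard derivations for variational inequalities with mixed boundary conditions.

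For (i), let $u$ be a strong solution and pick any $v\in\mathcal{K}^{1}_{w}(\Omega)$. I would multiply the PDE $-\Delta u+u=k$ by $v-u$ and integrate on $\Omega$. Since $\Delta u=u-k\in\LL^2(\Omega)$, we have $\nabla u\in\HH_{\mathrm{div}}(\Omega)$ and, by assumption, $\partial_{\nn}u\in\LL^2(\Gamma)$, so the divergence formula (Proposition~\ref{divergenceformula}) yields
$$
\int_\Omega\nabla u\cdot\nabla(v-u)+\int_\Omega u(v-u)=\int_\Omega k(v-u)+\int_\Gamma \partial_{\nn}u\,(v-u).
$$
It then suffices to check $\int_\Gamma(\partial_{\nn}u-\ell)(v-u)\geq 0$ by a sign analysis on each piece of $\Gamma$: on $\Gamma_{\mathrm{D}}$ the integrand vanishes because $v=u=w$; on $\Gamma_{\mathrm{N}}$ one has $\partial_{\nn}u=\ell$; on $\Gamma_{\mathrm{S-}}$, splitting $v-u=(v-w)+(w-u)$ and using the strong complementarity $(u-w)(\partial_{\nn}u-\ell)=0$ together with $v-w\leq 0$ and $\partial_{\nn}u-\ell\leq 0$ gives nonnegativity; the case of $\Gamma_{\mathrm{S+}}$ is symmetric.

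For (ii), I would first test the weak inequality against $v=u\pm\varphi$ with $\varphi\in\mathcal{C}^{\infty}_{0}(\Omega)$, which belongs to $\mathcal{K}^{1}_{w}(\Omega)$ since $\varphi=0$ on $\Gamma$; this yields $-\Delta u+u=k$ in $\mathcal{C}^{\infty}_{0}(\Omega)'$. Hence $\Delta u\in\LL^2(\Omega)$ and, using the assumption $\partial_{\nn}u\in\LL^2(\Gamma)$, the divergence formula reduces the variational inequality to the boundary form
$$
\int_\Gamma(\partial_{\nn}u-\ell)(v-u)\geq 0,\qquad\forall v\in\mathcal{K}^{1}_{w}(\Omega).
$$
Because the left-hand side depends only on traces, this can be recast as $\int_\Gamma(\partial_{\nn}u-\ell)(\tilde v-u|_\Gamma)\geq 0$ for every $\tilde v\in\mathcal{K}_{w}^{1/2}(\Gamma)$ (using that any such trace extends to an element of $\mathcal{K}^{1}_{w}(\Omega)$), and then, since $\partial_{\nn}u-\ell\in\LL^2(\Gamma)$, the density part of the consistency assumption extends it to all $\tilde v\in\mathcal{K}^{0}_{w}(\Gamma)$.

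To recover the pointwise conditions, I would localize using compactly supported $\LL^2$ perturbations around a.e. point of each piece. On $\Gamma_{\mathrm{N}}$ the sign of the perturbation is unconstrained, forcing $\partial_{\nn}u=\ell$ a.e.; on $\Gamma_{\mathrm{S-}}$, by the interior condition in Definition~\ref{regulieresens2}(i), a.e. point admits a relative neighborhood entirely contained in $\Gamma_{\mathrm{S-}}$, and perturbing with $\tilde v=u|_\Gamma-\chi$ for $\chi\geq 0$ compactly supported there gives $\partial_{\nn}u\leq\ell$ a.e.; the complementarity $(u-w)(\partial_{\nn}u-\ell)=0$ is then obtained by choosing $\tilde v$ equal to $w$ on $\Gamma_{\mathrm{S-}}$ and to $u|_\Gamma$ elsewhere, noting that the integrand $(\partial_{\nn}u-\ell)(w-u)$ is pointwise nonpositive while its integral is nonnegative, hence vanishes a.e. The treatment of $\Gamma_{\mathrm{S+}}$ is symmetric.

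The main obstacle is the rigorous use of the consistency assumption in the second implication: one must ensure that the trace-level variational inequality, a priori available only against $\HH^{1/2}$ traces of admissible $\HH^{1}$ extensions, may be tested against merely $\LL^2$ cone elements, and that these can be localized around almost every point of $\Gamma_{\mathrm{S}\pm}$ while still belonging to $\mathcal{K}^{0}_{w}(\Gamma)$. This is precisely the combined role of the interior condition (i) and the density condition (ii) in Definition~\ref{regulieresens2}.
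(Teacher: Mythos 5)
Your proof is correct and follows essentially the same standard route as the argument the paper relies on (the paper omits the proof, deferring to the analogous proofs in~\cite{4ABC}): testing the strong equation against $v-u$ with the divergence formula and a sign analysis for (i), and for (ii) recovering the PDE from interior test functions, reducing to a boundary inequality, passing from $\mathcal{K}^{1/2}_{w}(\Gamma)$ to $\mathcal{K}^{0}_{w}(\Gamma)$ by the density part of consistency, and localizing with $\LL^2$ perturbations. One minor remark: once you are at the $\LL^{2}$ level, perturbations supported on arbitrary measurable subsets of $\Gamma_{\mathrm{S}\pm}$ are admissible, so your appeal to the interior condition (i) of Definition~\ref{regulieresens2} for the localization step is superfluous (though harmless).
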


Using the classical characterization of the projection operator, one can easily get the following existence/uniqueness result.

\begin{myProp}\label{existenceunicitepbDNS}
The scalar Signorini problem~\eqref{PbDirichletNeumannSigno} admits a unique weak solution $u\in\HH^{1}(\Omega)$ characterized by
$$
    \displaystyle u=\mathrm{proj}_{\mathcal{K}_{w}^{1}(\Omega)}(F),
$$
where $F\in\HH^{1}(\Omega)$ is the unique solution to the Neumann problem
$$
\arraycolsep=2pt
\left\{
\begin{array}{rcll}
-\Delta F+F  & = & k   & \text{ in } \Omega , \\
\partial_{\nn}F & = & \ell  & \text{ on } \Gamma,\\
\end{array}
\right.
$$ 
and where $\mathrm{proj}_{\mathcal{K}_{w}^{1}(\Omega)} : \HH^{1}(\Omega) \to \HH^{1}(\Omega)$ stands for the classical projection operator onto the nonempty closed convex subset $\mathcal{K}_{w}^{1}(\Omega)$ of $\HH^{1}(\Omega)$ for the usual scalar product~$\dual{\cdot}{\cdot}_{\HH^{1}(\Omega)}$.
\end{myProp}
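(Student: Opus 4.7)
The key observation is that the weak formulation from Definition~\ref{WeaksolutionSigno} is exactly the characterization of the projection onto the nonempty closed convex set $\mathcal{K}_{w}^{1}(\Omega)$ in the Hilbert space $\HH^{1}(\Omega)$ equipped with its standard scalar product (which corresponds to $\mathrm{M}=\mathrm{I}$ and $h=1$ in the general notation of Subsection~\ref{boundary}). My plan proceeds in three short steps.

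First, I would identify the left-hand side of the variational inequality in Definition~\ref{WeaksolutionSigno} as $\dual{u}{v-u}_{\HH^{1}(\Omega)}$, where $\dual{\cdot}{\cdot}_{\HH^{1}(\Omega)}$ denotes the usual scalar product on $\HH^{1}(\Omega)$. For the right-hand side, the map
$$v\longmapsto \int_{\Omega}kv+\int_{\Gamma}\ell v$$
is a continuous linear functional on $\HH^{1}(\Omega)$ by Cauchy-Schwarz together with the continuity of the trace operator $\HH^{1}(\Omega)\to\LL^{2}(\Gamma)$. The Riesz representation theorem therefore furnishes a unique $F\in\HH^{1}(\Omega)$ satisfying
$$\dual{F}{v}_{\HH^{1}(\Omega)}=\int_{\Omega}kv+\int_{\Gamma}\ell v,\qquad \forall v\in\HH^{1}(\Omega),$$
which is exactly the weak formulation (Definition~\ref{SolufaibleDN}) of the Neumann problem stated in the proposition with $\mathrm{M}=\mathrm{I}$ and $h=1$. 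By Propositions~\ref{DNequiart} and~\ref{existenceunicitéDN}, this $F$ is then the unique strong solution to that Neumann problem.

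Substituting this $F$, the weak Signorini formulation becomes: find $u\in\mathcal{K}_{w}^{1}(\Omega)$ such that
$$\dual{u-F}{v-u}_{\HH^{1}(\Omega)}\geq 0,\qquad \forall v\in\mathcal{K}_{w}^{1}(\Omega),$$
which is precisely the defining variational characterization of the projection of $F$ onto the closed convex subset $\mathcal{K}_{w}^{1}(\Omega)$ of the Hilbert space $\HH^{1}(\Omega)$. Existence and uniqueness of $u=\proj_{\mathcal{K}_{w}^{1}(\Omega)}(F)$ then follow from the standard projection theorem on closed convex subsets of a Hilbert space. The only preliminary checks I would record are that $\mathcal{K}_{w}^{1}(\Omega)$ is nonempty (any $\HH^{1}$-extension of $w\in\HH^{1/2}(\Gamma)$ satisfies the three inequality constraints as equalities on $\Gamma$) and closed (by continuity of the trace), convexity being immediate. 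No substantial obstacle is anticipated: the argument simply recasts the variational inequality, via Riesz representation, as a projection problem in a Hilbert space.
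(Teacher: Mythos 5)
Your argument is correct and matches the paper's (omitted) proof in substance: the paper states that the result follows from the classical characterization of the projection operator, which is exactly your reduction of the weak Signorini formulation, via Riesz representation of $v\mapsto\int_{\Omega}kv+\int_{\Gamma}\ell v$ by $F$, to $\dual{F-u}{v-u}_{\HH^{1}(\Omega)}\leq 0$ for all $v\in\mathcal{K}_{w}^{1}(\Omega)$. Nothing further is needed.
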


\subsubsection{A scalar Tresca friction problem}\label{sectionproblèmedeTresca1ercas}
In this part we assume that $\ell>0$ \textit{a.e.} on $\Gamma$. Consider the scalar Tresca friction problem given by
\begin{equation}\label{Trescageneral}\tag{TP}
\arraycolsep=2pt
\left\{
\begin{array}{rcll}
-\mathrm{div}(\mathrm{M}\nabla{u})+uh  & = & k   & \text{ in } \Omega , \\
\left|\mathrm{M}\nabla{u}\cdot\boldsymbol{\nn}\right|\leq \ell \text{ and }  u\:\mathrm{M}\nabla{u}\cdot \boldsymbol{\nn}+\ell\left|u\right|&=&0  & \text{ on } \Gamma,\end{array}
\right.
\end{equation}
where the data have been introduced at the beginning of Subsection~\ref{boundary}.

\begin{myDefn}[Solution to the scalar Tresca friction problem]
A (strong) solution to the scalar Tresca friction problem~\eqref{Trescageneral} is a function $u\in \HH^{1}(\Omega)$ such that $-\mathrm{div}(\mathrm{M}\nabla{u})+uh=k$ in~$\mathcal{C}^{\infty}_0(\Omega)'$, $\mathrm{M}\nabla{u}\cdot\boldsymbol{\nn}\in\mathrm{L}^{2}(\Gamma)$ with $|\mathrm{M}(s)\nabla{u}(s)\cdot\boldsymbol{\nn(s)}|\leq \ell(s)$ and $u(s)\mathrm{M}(s)\nabla{u}(s)\cdot\boldsymbol{\nn(s)}+\ell(s)|u(s)|=0$ for almost all $s\in \Gamma$.
\end{myDefn}

\begin{myDefn}[Weak solution to the scalar Tresca friction problem]\label{WFTresca}
A weak solution to the scalar Tresca friction problem~\eqref{Trescageneral} is a function $u\in \HH^{1}(\Omega)$ such that
 \begin{equation*}
     \int_{\Omega} \mathrm{M}\nabla u\cdot\nabla (v-u)+\int_{\Omega} uh(v-u)+\int_{\Gamma}\ell|v|-\int_{\Gamma}\ell|u|\geq \int_{\Omega}k(v-u), \qquad \forall v\in\HH^{1}(\Omega).
\end{equation*}
\end{myDefn}

\begin{myProp}\label{Trescaequivalartc}
A function $u\in \HH^{1}(\Omega)$ is a (strong) solution to the scalar Tresca friction problem~\eqref{Trescageneral} if and only if $u$ is a weak solution to the scalar Tresca friction problem~\eqref{Trescageneral}.
\end{myProp}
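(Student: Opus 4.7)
The plan is to prove the two implications separately. For the forward direction (strong implies weak), I would start from the strong equation $-\mathrm{div}(\mathrm{M}\nabla u)+uh=k$, multiply by $v-u$ for arbitrary $v\in\HH^{1}(\Omega)$, and apply the divergence formula (Proposition~\ref{divergenceformula}) to get
$$\int_{\Omega}\mathrm{M}\nabla u\cdot\nabla(v-u)+\int_{\Omega}uh(v-u)-\int_{\Gamma}(\mathrm{M}\nabla u\cdot\boldsymbol{\nn})(v-u)=\int_{\Omega}k(v-u).$$
The key pointwise inequality
$$(\mathrm{M}\nabla u\cdot\boldsymbol{\nn})(v-u)\geq -\ell|v|+\ell|u|\quad\text{a.e. on }\Gamma$$
follows from the boundary conditions: $|\mathrm{M}\nabla u\cdot\boldsymbol{\nn}|\leq\ell$ gives $(\mathrm{M}\nabla u\cdot\boldsymbol{\nn})v\geq-\ell|v|$, while $u\,\mathrm{M}\nabla u\cdot\boldsymbol{\nn}=-\ell|u|$ handles the other term. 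Integrating and rearranging yields the weak formulation of Definition~\ref{WFTresca}.

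For the converse (weak implies strong), I would proceed in three steps. First, testing with $v=u\pm\varphi$ for $\varphi\in\mathcal{C}^{\infty}_0(\Omega)$ makes the boundary integrals cancel (the traces on $\Gamma$ coincide) and gives equality, yielding $-\mathrm{div}(\mathrm{M}\nabla u)+uh=k$ in $\mathcal{C}^{\infty}_0(\Omega)'$. Since $k-uh\in\LL^{2}(\Omega)$, we infer $\mathrm{M}\nabla u\in\HH_{\mathrm{div}}(\Omega)$, so its normal trace $\mathrm{M}\nabla u\cdot\boldsymbol{\nn}\in\HH^{-1/2}(\Gamma)$ is well-defined by Proposition~\ref{divergenceformula}. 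Second, testing with $v=u\pm\varphi$ for arbitrary $\varphi\in\HH^{1}(\Omega)$, applying the divergence formula and using $||u\pm\varphi|-|u||\leq|\varphi|$ on $\Gamma$, I would obtain the bound
$$\bigl|\dual{\mathrm{M}\nabla u\cdot\boldsymbol{\nn}}{\varphi}_{\HH^{-1/2}(\Gamma)\times\HH^{1/2}(\Gamma)}\bigr|\leq\int_{\Gamma}\ell|\varphi|,\qquad\forall\varphi\in\HH^{1}(\Omega).$$
By density of trace images of $\HH^{1}(\Omega)$ in $\LL^{2}(\Gamma)$ and the Riesz representation theorem, this upgrades $\mathrm{M}\nabla u\cdot\boldsymbol{\nn}$ to an $\LL^{2}(\Gamma)$ function satisfying $|\mathrm{M}\nabla u\cdot\boldsymbol{\nn}|\leq\ell$ almost everywhere on $\Gamma$.

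Third, specializing the weak inequality to $v=0$, combining with the equation via the divergence formula yields
$$\int_{\Gamma}\bigl(u\,\mathrm{M}\nabla u\cdot\boldsymbol{\nn}+\ell|u|\bigr)\leq 0,$$
while the pointwise bound $|\mathrm{M}\nabla u\cdot\boldsymbol{\nn}|\leq\ell$ forces $u\,\mathrm{M}\nabla u\cdot\boldsymbol{\nn}+\ell|u|\geq 0$ almost everywhere on $\Gamma$; together these give the complementarity identity.

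The main obstacle is the second step: upgrading the normal trace from $\HH^{-1/2}(\Gamma)$ to an $\LL^{2}(\Gamma)$ function with a pointwise bound by $\ell$. The argument relies on combining the linear estimate against an $\LL^{1}$-type seminorm $\varphi\mapsto\int_{\Gamma}\ell|\varphi|$ with the density of $\HH^{1/2}(\Gamma)$ in $\LL^{2}(\Gamma)$, which requires some care since the pairing is a priori only against $\HH^{1/2}$ traces; the remaining manipulations (testing with admissible perturbations, pointwise sign arguments) are routine once this regularity upgrade is in hand.
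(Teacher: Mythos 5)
Your proof is correct and complete: the forward direction via the divergence formula and the pointwise boundary inequality, and the converse via interior test functions, the $\pm\varphi$ perturbation argument giving $\bigl|\dual{\mathrm{M}\nabla u\cdot\boldsymbol{\nn}}{\varphi}\bigr|\leq\int_{\Gamma}\ell|\varphi|$ (upgraded to an $\LL^{2}(\Gamma)$ normal trace with $|\mathrm{M}\nabla u\cdot\boldsymbol{\nn}|\leq\ell$ by density and Riesz), and finally $v=0$ combined with the pointwise bound to force the complementarity identity, is exactly the standard route. The paper itself omits this proof and refers to~\cite{4ABC}, where the argument given is of the same nature as yours, so there is nothing to flag.
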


Using the classical characterization of the proximal operator, we obtain the following existence/uniqueness result.

\begin{myProp}\label{existenceunicitePbDNT}
The scalar Tresca friction problem~\eqref{Trescageneral} admits a unique (strong) solution~$u\in\HH^{1}(\Omega)$ characterized by
$$
\displaystyle u=\mathrm{prox}_{\phi}(F),
$$
where $F\in\HH^{1}(\Omega)$ is the unique solution to the Neumann problem
$$
\arraycolsep=2pt
\left\{
\begin{array}{rcll}
-\mathrm{div}(\mathrm{M}\nabla{F})+Fh  & = & k   & \text{ in } \Omega, \\
\mathrm{M}\nabla{F}\cdot\boldsymbol{\nn} & = & 0  & \text{ on } \Gamma,\\
\end{array}
\right.
$$
and where $\mathrm{prox}_{\phi} : \HH^{1}(\Omega) \to \HH^{1}(\Omega)$ stands for the proximal operator associated with the Tresca friction functional given by
\begin{equation*}
\displaystyle\fonction{\phi}{\HH^{1}(\Omega)}{\R}{v}{\displaystyle \phi(v):=\int_{\Gamma}\ell|v|,}
\end{equation*}
considered on the Hilbert space $(\HH^{1}(\Omega),\dual{\cdot}{\cdot}_{\mathrm{M},h})$.
\end{myProp}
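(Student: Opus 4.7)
The plan is to reformulate the weak formulation of~\eqref{Trescageneral} (which is equivalent to the strong one by Proposition~\ref{Trescaequivalartc}) in terms of the scalar product~$\dual{\cdot}{\cdot}_{\mathrm{M},h}$ on~$\HH^{1}(\Omega)$, so that the resolvent characterization of the proximal operator from Definition~\ref{proxi} applies directly. The first preparatory step is to check that~$\phi$ is a proper, lower semi-continuous and convex function on the Hilbert space $(\HH^{1}(\Omega),\dual{\cdot}{\cdot}_{\mathrm{M},h})$: properness follows from~$\phi(0)=0$, convexity from the triangle inequality, and continuity (hence lower semi-continuity) from the continuity of the trace operator~$\HH^{1}(\Omega)\to\LL^{2}(\Gamma)$ together with~$\ell\in\LL^{2}(\Gamma)$. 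The norm induced by~$\dual{\cdot}{\cdot}_{\mathrm{M},h}$ being equivalent to the usual~$\HH^{1}$-norm (by the coercivity hypotheses on~$\mathrm{M}$ and~$h$), this guarantees that $\mathrm{prox}_{\phi}$ is well-defined, single-valued and nonexpansive on this space.

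Then, invoking Proposition~\ref{existenceunicitéDN} with Neumann datum~$0$, let $F\in\HH^{1}(\Omega)$ denote the unique solution to the associated Neumann problem. By Definition~\ref{SolufaibleDN} its weak formulation reads
$$
\dual{F}{v}_{\mathrm{M},h}=\int_{\Omega}kv,\qquad \forall v\in\HH^{1}(\Omega).
$$
Setting $u:=\mathrm{prox}_{\phi}(F)$ and using the identity $u=(\mathrm{id}+\partial\phi)^{-1}(F)$ from Definition~\ref{proxi} gives $F-u\in\partial\phi(u)$, that is
$$
\dual{F-u}{v-u}_{\mathrm{M},h}\leq \phi(v)-\phi(u),\qquad \forall v\in\HH^{1}(\Omega).
$$
Expanding~$\dual{\cdot}{\cdot}_{\mathrm{M},h}$ and substituting the identity above for~$F$ transforms this inequality into precisely the variational inequality of Definition~\ref{WFTresca}. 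Hence~$u$ is a weak, and therefore strong, solution by Proposition~\ref{Trescaequivalartc}.

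For uniqueness, the same chain of equivalences runs backwards: any weak solution~$\tilde{u}$ of~\eqref{Trescageneral} satisfies $F-\tilde{u}\in\partial\phi(\tilde{u})$, which forces $\tilde{u}=\mathrm{prox}_{\phi}(F)=u$ by single-valuedness of the proximal operator. I do not anticipate any real obstacle; the only mildly delicate point is the consistent use of the perturbed scalar product~$\dual{\cdot}{\cdot}_{\mathrm{M},h}$ when writing the subdifferential inclusion (since the identification of~$\partial\phi$ with a subset of~$\HH^{1}(\Omega)$ depends on the chosen scalar product), but this is a cosmetic rather than conceptual difficulty. The overall argument is a clean translation of the general principle that the resolvent of a maximal monotone operator solves the associated variational inequality, specialized to the Hilbert structure naturally attached to the Tresca problem.
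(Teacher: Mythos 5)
Your proposal is correct and follows essentially the same route as the paper: the paper omits the proof (referring to~\cite{4ABC}) but explicitly states that it rests on the classical characterization of the proximal operator, i.e.\ rewriting the weak formulation of~\eqref{Trescageneral} as the subdifferential inclusion $F-u\in\partial\phi(u)$ in the Hilbert space $(\HH^{1}(\Omega),\dual{\cdot}{\cdot}_{\mathrm{M},h})$, exactly as you do. Your handling of the scalar-product-dependent identification of $\partial\phi$ and the back-and-forth equivalence for uniqueness matches the intended argument.
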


\section{Main theoretical results}\label{mainresult}
Let $d\in\mathbb{N}^{*}$ be a positive integer and let~$f\in\HH^{1}(\R^{d})$ and~$g\in\HH^{2}(\R^{d})$ be such that $g>0$ \textit{a.e.} on~$\R^{d}$. In this paper we consider the shape optimization problem given by
\begin{equation*}
    \minn\limits_{ \substack{ \Omega\in \mathcal{U} \\ \vert \Omega \vert = \lambda } } \; \mathcal{J}(\Omega),
\end{equation*}
where
\begin{equation*}
        \mathcal{U} :=\{ \Omega\subset\R^{d} \mid \Omega \text{ nonempty connected bounded open subset of } \R^{d} \text{ with Lipschitz boundary} \},
\end{equation*}
with the volume constraint $\vert \Omega \vert = \lambda > 0$, where $\mathcal{J} : \mathcal{U} \to \R$ is the \textit{Tresca energy functional} defined by
\begin{equation*}
    \mathcal{J}(\Omega) := \frac{1}{2}\int_{\Omega}\left( \left\| \nabla{u_\Omega} \right\|^{2}+|u_{\Omega}|^{2}\right)+\int_{\Gamma}g|u_{\Omega}|-\int_{\Omega}fu_{\Omega},
\end{equation*}
where $\Gamma:=\partial{\Omega}$ is the boundary of $\Omega$ and where $u_\Omega \in\HH^{1}(\Omega)$ stands for the unique solution to the scalar Tresca friction problem given by
\begin{equation}\label{Trescaproblem222}\tag{TP\ensuremath{{}_\Omega}}%\tag{TP${}_\Omega$}
\arraycolsep=2pt
\left\{
\begin{array}{rcll}
-\Delta u+u &=&f    & \text{ in } \Omega, \\
|\partial_{\nn}u|\leq g \text{ and }  u\partial_{\nn}u+g|u|&=&0  & \text{ on } \Gamma,
\end{array}
\right.
\end{equation}
for all~$\Omega \in \mathcal{U}$. From Subsection~\ref{sectionproblèmedeTresca1ercas}, note that $\mathcal{J}$ can also be expressed as
$$
  \mathcal{J}(\Omega) = -\frac{1}{2}\int_{\Omega} \left(\left\| \nabla{u_\Omega} \right\|^{2}+|u_\Omega|^{2}\right),
$$
for all~$\Omega \in \mathcal{U}$.

In the whole section let us fix~$\Omega_0 \in \mathcal{U}$. We denote by $\boldsymbol{\mathrm{id}} :  \R^{d}\rightarrow \R^{d}$ the identity operator. Our aim here is to prove that, under appropriate assumptions, the functional $\mathcal{J}$ is \textit{shape differentiable} at $\Omega_0$, in the sense that the map
$$
%\displaystyle\fonction{\mathfrak{J}}{\mathcal{C}^{1,\infty}(\R^{d},\R^{d})}{\R}{\boldsymbol{V}}{\displaystyle \mathfrak{J}(\boldsymbol{V}):=J((\boldsymbol{\mathrm{id }}+\boldsymbol{V})(\Omega_0)),}
\begin{array}{rcl}
\mathcal{C}^{1,\infty}(\R^{d},\R^{d}) & \longrightarrow & \R \\
\boldsymbol{V} & \longmapsto & \displaystyle \mathcal{J}((\boldsymbol{\mathrm{id }}+\boldsymbol{V})(\Omega_0)),
\end{array}
$$
where $\mathcal{C}^{1,\infty}(\R^{d},\R^{d}):=\mathcal{C}^{1}(\R^{d},\R^{d})\cap\mathrm{W}^{1,\infty}(\R^{d},\R^{d}) $, is Gateaux differentiable at~$0$, and to give an expression of the Gateaux differential, denoted by~$\mathcal{J}'(\Omega_0)$, which is called the \textit{shape gradient} of~$\mathcal{J}$ at~$\Omega_0$. To this aim we have to perform the sensitivity analysis of the scalar Tresca friction problem~\eqref{Trescaproblem222} with respect to the shape, and then characterize the material and shape directional derivatives. 

\begin{comment}For $\boldsymbol{V}\in\mathcal{C}^{1,\infty}(\R^{d},\R^{d})$ and $t>0$ sufficiently small, then $\Omega_{t}:=(\boldsymbol{\mathrm{id}}+t\boldsymbol{V})(\Omega_{0})\in\mathcal{U}$, and the material directional derivative denoted by $\overline{u}'_0$ is defined as the derivative at $t=0$ of the map 
$t\in\mathbb{R}^{+}\mapsto \overline{u}_t:=u_{t}\circ(\boldsymbol{\mathrm{id}}+t\boldsymbol{V})\in\HH^{1}(\Omega_{0})$, where $u_{t}\in\HH^{1}(\Omega_{t})$ is the solution to the scalar Tresca friction problem~(TP${}_{\Omega_{t}}$). Then the shape directional derivative can be defined as $u'_{0}:=\overline{u}'_0-\nabla{u_{0}}\cdot\boldsymbol{V}\in\HH^{1}(\Omega_{0})$ which is well defined under some appropriate assumptions described in Section~\ref{sectshape}.\\
\end{comment}
For better organization, this part will be done in the following three separate subsections below. In Subsection~\ref{PerturbTrescatyoefric}, we perturb the scalar Tresca friction problem~(TP${}_{\Omega_{0}}$) with respect to the shape. In Subsection~\ref{sectshape}, under appropriate assumptions, we characterize the material directional derivative as solution to a variational inequality (see Theorem~\ref{Lagderiv}). Additionally, assuming a regularity assumption on the solution to the scalar Tresca friction problem, we characterize the material and shape directional derivatives as being weak solutions to scalar Signorini problems (see Corollaries~\ref{LagderivSigno} and~\ref{shapederivaticesigno}). Finally we prove in Subsection~\ref{energyfunctional} our main result asserting that, under appropriate assumptions, the functional~$\mathcal{J}$ is shape differentiable at~$\Omega_0$ and we provide an expression of the shape gradient~$\mathcal{J}'(\Omega_0)$ (see Theorem~\ref{shapederivofJ} and Corollary~\ref{shapederivofJ3}).

\subsection{Setting of the shape perturbation and preliminaries}\label{PerturbTrescatyoefric}
 Consider $\boldsymbol{V}\in \mathcal{C}^{1,\infty}(\R^{d},\R^{d})$ and, for all $t\geq0$ sufficiently small such that~$\boldsymbol{\mathrm{id}}+t\boldsymbol{V}$ is a $\mathcal{C}^{1}$-diffeomorphism of $\R^{d}$, consider the shape perturbed scalar Tresca friction problem given by
\begin{equation}\label{Trescaproblem}\tag{TP\ensuremath{_{t}}}%\tag{TP$_{t}$}
\arraycolsep=2pt
\left\{
\begin{array}{rcll}
-\Delta u_{t}+u_{t} &=&f    & \text{ in } \Omega_{t} , \\
|\partial_{\nn}u_{t}|\leq g \text{ and }  u_{t}\partial_{\nn}u_{t}+g|u_{t}|&=&0  & \text{ on } \Gamma_{t},
\end{array}
\right.
\end{equation}
where $\Omega_{t}:=(\boldsymbol{\mathrm{id}}+t\boldsymbol{V})(\Omega_{0})\in\mathcal{U}$ and $\Gamma_{t}:= \partial \Omega_t = (\boldsymbol{\mathrm{id}}+t\boldsymbol{V})(\Gamma_{0})$.
From Subsection~\ref{sectionproblèmedeTresca1ercas}, there exists a unique solution $u_{t}\in\HH^{1}(\Omega_{t})$ to~\eqref{Trescaproblem} which satisfies 
\begin{equation*}
     \int_{\Omega_{t}} \nabla u_{t}\cdot\nabla (v-u_{t})+\int_{\Omega_{t}} u_{t} (v-u_{t})+\int_{\Gamma_{t}}g|v|-\int_{\Gamma_{t}}g|u_{t}|\geq \int_{\Omega_{t}}f(v-u_{t}), \qquad\forall v\in\HH^{1}(\Omega_{t}).
\end{equation*}
Following the usual strategy in shape optimization literature (see, e.g., \cite{HENROT}) and using the change of variables $\boldsymbol{\mathrm{id}}+t\boldsymbol{V}$, we prove that $\overline{u}_t:=u_{t}\circ(\boldsymbol{\mathrm{id}}+t\boldsymbol{V})\in\HH^{1}(\Omega_{0})$ satisfies
\begin{multline*}
\int_{\Omega_{0}} \mathrm{A}_{t}\nabla \overline{u}_t\cdot\nabla (v-\overline{u}_t)+\int_{\Omega_{0}} \overline{u}_t(v-\overline{u}_t)\mathrm {J}_{t}+\int_{\Gamma_{0}}g_{t}\mathrm {J}_{\mathrm{T}_{t}}|v|-\int_{\Gamma_{0}}g_{t}\mathrm {J}_{\mathrm{T}_{t}}|\overline{u}_t|\\\geq \int_{\Omega_{0}}f_{t}\mathrm {J}_{t}(v-\overline{u}_t), \qquad \forall v\in\HH^{1}(\Omega_{0}),
\end{multline*}
where $f_{t}:=f\circ(\boldsymbol{\mathrm{id}}+t\boldsymbol{V})\in\HH^{1}(\R^{d})$, $g_{t}:=g\circ(\boldsymbol{\mathrm{id}}+t\boldsymbol{V})\in\HH^{2}(\R^{d})$,
 $\mathrm{J}_{t}:=\mathrm{det}(\mathrm{I}+t\nabla{\boldsymbol{V}})\in\LL^{\infty}(\R^{d})$ is the Jacobian determinant, $\mathrm{A}_{t}:=\mathrm{det}(\mathrm{I}+t\nabla{\boldsymbol{V}})(\mathrm{I}+t\nabla{\boldsymbol{V}})^{-1}(\mathrm{I}+t\nabla{\boldsymbol{V}}^{\top})^{-1}\in\LL^{\infty}(\R^{d},\R^{d \times d})$ and~$\mathrm {J}_{\mathrm{T}_{t}}:=\mathrm{det}(\mathrm{I}+t\nabla{\boldsymbol{V}}) \|(\mathrm{I}+t\nabla{\boldsymbol{V}}^{\top})^{-1}\boldsymbol{\nn} \| \in\mathcal{C}^{0}(\Gamma_{0})$ is the tangential Jacobian,
where~$\mathrm{I} $ stands for the identity matrix of~$\R^{d \times d}$. Therefore, we deduce from Subsection~\ref{sectionproblèmedeTresca1ercas} that~$\overline{u}_t\in\HH^{1}(\Omega_{0})$ is the unique solution to the perturbed scalar Tresca friction problem
\begin{equation}\label{Trescashape}\tag{\ensuremath{\overline{\mathrm{TP}}_{t}}}%\tag{$\overline{\mathrm{TP}}_{t}$}
\arraycolsep=2pt
\left\{
\begin{array}{rcll}
-\mathrm{div}\left(\mathrm{A}_{t}\nabla\overline{u}_t\right)+\overline{u}_t\mathrm{J}_{t} &=&f_{t}\mathrm{J}_{t}    & \text{ in } \Omega_{0} , \\
\left|\mathrm{A}_{t}\nabla\overline{u}_t\cdot\boldsymbol{\nn}\right|\leq g_{t}\mathrm {J}_{\mathrm{T}_{t}}\text{ and }  \overline{u}_t\mathrm{A}_{t}\nabla\overline{u}_t\cdot\boldsymbol{\nn}+g_{t}\mathrm {J}_{\mathrm{T}_{t}}\left|\overline{u}_t\right|&=&0  & \text{ on } \Gamma_{0},
\end{array}
\right.
\end{equation}
and can be expressed as
$$
   \displaystyle \overline{u}_t=\mathrm{prox}_{\phi_{t}}(F_{t}),
$$
where $F_{t}\in\HH^{1}(\Omega_{0})$ is the unique solution to the perturbed Neumann problem
\begin{equation*}
\arraycolsep=2pt
\left\{
\begin{array}{rcll}
-\mathrm{div}\left(\mathrm{A}_{t}\nabla F_{t}\right)+F_{t}\mathrm{J}_{t} &=&f_{t}\mathrm{J}_{t}    & \text{ in } \Omega_{0} , \\
\mathrm{A}_{t}\nabla F_{t}\cdot \boldsymbol{\nn}&=&0  & \text{ on } \Gamma_{0},
\end{array}
\right.
\end{equation*}
and $\mathrm{prox}_{\phi_{t}} : \HH^{1}(\Omega_{0}) \to \HH^{1}(\Omega_{0})$ is the proximal operator associated with the perturbed Tresca friction functional
$$
\displaystyle\fonction{\phi_{t}}{\HH^{1}(\Omega_{0})}{\R}{v}{\displaystyle \phi_{t}(v):=\int_{\Gamma_{0}}g_{t}\mathrm {J}_{\mathrm{T}_{t}}|v|,}
$$
considered on the perturbed Hilbert space $(\HH^{1}(\Omega_{0}),\dual{\cdot}{\cdot}_{\mathrm{A}_{t},\mathrm{J}_{t}})$.

Since the derivative of the map~$ t \in \R_+ \mapsto F_t \in \HH^{1}(\Omega_{0})$ at~$t=0$ is well known in the literature (it can be proved in a similar way as in Lemma~\ref{lem876} below), one might believe that Proposition~\ref{TheoABC2018} could allow to compute the derivative of the map~$ t \in \R_+ \mapsto \overline{u}_t \in \HH^{1}(\Omega_{0})$ at~$t=0$ (that is, the material directional derivative) under the assumption of the twice epi-differentiability of the parameterized functional~$\phi_t$. This would be very similar to the strategy developed in our previous paper~\cite{BCJDC} in which we have considered a simpler case where~$\mathrm{J}_{t}=\mathrm{J}_{\mathrm{T}_{t}}=1$ and $\mathrm{A_{t}}=\mathrm{I}$ and where, therefore, the scalar product~$\dual{\cdot}{\cdot}_{\mathrm{A}_{t},\mathrm{J}_{t}}$ was independent of~$t$. However, in the present work, we face a scalar product $\dual{\cdot}{\cdot}_{\mathrm{A}_{t},\mathrm{J}_{t}}$ that is~$t$-dependent and we need to overcome this difficulty as follows. Let us write $\mathrm{A}_{t}=\mathrm{I}+(\mathrm{A}_{t}-\mathrm{I})$ and~$\mathrm{J}_{t}=1+(\mathrm{J}_{t}-1)$ to get
\begin{multline*}
\dual{\overline{u}_t}{v-\overline{u}_t}_{\HH^{1}(\Omega_{0})}+\int_{\Gamma_{0}}g_{t}\mathrm {J}_{\mathrm{T}_{t}}|v|-\int_{\Gamma_{0}}g_{t}\mathrm {J}_{\mathrm{T}_{t}}|\overline{u}_t|\geq \int_{\Omega_{0}}f_{t}\mathrm {J}_{t}(v-\overline{u}_t)\\-\int_{\Omega_{0}}\left(\mathrm{A}_{t}-\mathrm{I}\right)\nabla{\overline{u}_t}\cdot\nabla{\left(v-\overline{u}_t\right)}-\int_{\Omega_{0}}\left(\mathrm{J}_{t}-1\right)\overline{u}_t\left(v-\overline{u}_t\right), \qquad \forall v\in\HH^{1}(\Omega_{0}),
\end{multline*}
and thus
\begin{equation*}
     \displaystyle \overline{u}_t=\mathrm{prox}_{\Phi(t,\cdot)}(E_{t}),
\end{equation*}
where $E_{t}\in\HH^{1}(\Omega_{0})$ stands for the unique solution to the perturbed variational Neumann problem given by
\begin{equation*}
    \dual{E_{t}}{v}_{\HH^{1}(\Omega_{0})}=\int_{\Omega_{0}}f_{t}\mathrm {J}_{t}v-\int_{\Omega_{0}}\left(\mathrm{A}_{t}-\mathrm{I}\right)\nabla{\overline{u}_t}\cdot\nabla{v}-\int_{\Omega_{0}}\left(\mathrm{J}_{t}-1\right)\overline{u}_tv, \qquad \forall v\in\HH^{1}(\Omega_{0}),
\end{equation*}
and where $\mathrm{prox}_{\Phi(t,\cdot)} : \HH^{1}(\Omega_{0}) \to \HH^{1}(\Omega_{0})$ is the proximal operator associated with the parameterized Tresca friction functional defined by
\begin{equation*}%\label{Trescafunctio}
    \displaystyle\fonction{\Phi}{\mathbb{R}_{+}\times\HH^{1}(\Omega_{0})}{\R}{(t,v)}{\displaystyle \Phi(t,v):=\int_{\Gamma_{0}}g_{t}\mathrm {J}_{\mathrm{T}_{t}}|v|,}
\end{equation*}
considered on the standard Hilbert space $(\HH^{1}(\Omega_{0}),\dual{\cdot}{\cdot}_{\HH^{1}(\Omega_{0})})$ whose scalar product is the usual~$t$-independent one.

\begin{myRem}\normalfont
Note that the existence/uniqueness of the solution $E_{t}\in\HH^{1}(\Omega_{0})$ to the above perturbed variational Neumann problem can be easily derived from the Riesz representation theorem. Furthermore note that, if $\mathrm{div}\left(\left(\mathrm{A}_{t}-\mathrm{I}\right)\nabla\overline{u}_t\right)\in\LL^{2}(\Omega_{0})$, then the above perturbed variational Neumann problem corresponds exactly to the weak variational formulation of the perturbed Neumann problem given by
\begin{equation*}%\label{Neumanagain}
\arraycolsep=2pt
\left\{
\begin{array}{rcll}
-\Delta E_{t}+E_{t}  & = & f_{t}\mathrm {J}_{t}-\left(\mathrm{J}_{t}-1\right)\overline{u}_t+\mathrm{div}\left(\left(\mathrm{A}_{t}-\mathrm{I}\right)\nabla\overline{u}_t\right) & \text{ in } \Omega_{0} , \\
\partial_{\nn}E_{t} & = &  -\left(\mathrm{A}_{t}-\mathrm{I}\right)\nabla\overline{u}_t\cdot\boldsymbol{\nn} & \text{ on } \Gamma_{0}.\\
\end{array}
\right.
\end{equation*}
For instance, note that the condition $\mathrm{div}\left(\left(\mathrm{A}_{t}-\mathrm{I}\right)\nabla\overline{u}_t\right)\in\LL^{2}(\Omega_{0})$ is satisfied when $\overline{u}_t\in\HH^{2}(\Omega_{0})$.
\end{myRem}

Now our next step is to derive the differentiability of the map $t\in\R_{+} \mapsto E_{t} \in \HH^{1}(\Omega_{0})$ at $t=0$. To this aim let us recall that (see~\cite{HENROT}):
\begin{enumerate}[label={\rm (\roman*)}]
    \item The map $t\in\R_{+} \mapsto \mathrm{J}_{t}\in\LL^{\infty}(\R^{d})$ is differentiable at $t=0$ with derivative given by~$\mathrm{div}(\boldsymbol{V})$;
    \item The map $t\in\R_{+} \mapsto f_{t}\mathrm{J}_{t}\in\LL^{2}(\R^{d})$ is differentiable at $t=0$ with derivative given by~$f\mathrm{div}(\boldsymbol{V})+\nabla{f}\cdot \boldsymbol{V}$;
    \item The map $t\in\R_{+} \mapsto \mathrm{A}_{t}\in\LL^{\infty}(\R^{d},\R^{d \times d})$ is differentiable at $t=0$ with derivative given by~$\mathrm{A}'_{0}:=-\nabla{\boldsymbol{V}}-\nabla{\boldsymbol{V}}^{\top}+\mathrm{div}(\boldsymbol{V})\mathrm{I}$;
    \item The map $t\in\R_{+}\mapsto g_{t}\mathrm {J}_{\mathrm{T}_{t}}\in\LL^{2}(\Gamma_{0})$ is differentiable at $t=0$ with derivative given by~$\nabla{g}\cdot \boldsymbol{V}+g \mathrm{div}_{\Gamma_0}(\boldsymbol{V})$.
\end{enumerate}

\begin{myLem}\label{lem876}
The map $t\in\R_{+} \mapsto E_{t} \in \HH^{1}(\Omega_{0})$ is differentiable at $t=0$ and its derivative, denoted by $E'_{0} \in \HH^{1}(\Omega_{0})$, is the unique solution to the variational Neumann problem given by
\begin{multline}\label{Woderov}
    \dual{E'_{0}}{v}_{\HH^{1}(\Omega_{0})}=\int_{\Omega_{0}}\left(f\mathrm{div}(\boldsymbol{V})+\nabla{f}\cdot \boldsymbol{V}\right)v\\-\int_{\Omega_{0}}\left(-\nabla{\boldsymbol{V}}-\nabla{\boldsymbol{V}}^{\top}+\mathrm{div}(\boldsymbol{V})\mathrm{I}\right)\nabla{u_{0}}\cdot\nabla{v}-\int_{\Omega_{0}}\mathrm{div}(\boldsymbol{V})u_{0}v, \qquad\forall v\in\HH^{1}(\Omega_{0}).
\end{multline}
\end{myLem}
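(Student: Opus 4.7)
My plan begins with the Riesz representation theorem applied twice. First, the right-hand side of~\eqref{Woderov} clearly defines a bounded linear form on $\HH^{1}(\Omega_{0})$, so~\eqref{Woderov} uniquely determines some $E'_{0}\in\HH^{1}(\Omega_{0})$. Second, since $\mathrm{A}_{0}=\mathrm{I}$ and $\mathrm{J}_{0}=1$, the Riesz characterization of $E_{t}$ reduces at $t=0$ to $\dual{E_{0}}{v}_{\HH^{1}(\Omega_{0})}=\int_{\Omega_{0}}fv$ for all $v\in\HH^{1}(\Omega_{0})$. Subtracting this from the characterization of $E_{t}$ and dividing by $t>0$ yields, for every $v\in\HH^{1}(\Omega_{0})$,
\begin{equation*}
\dual{\tfrac{E_{t}-E_{0}}{t}-E'_{0}}{v}_{\HH^{1}(\Omega_{0})}=\mathcal{R}_{1}(t,v)+\mathcal{R}_{2}(t,v)+\mathcal{R}_{3}(t,v),
\end{equation*}
where $\mathcal{R}_{i}(t,v)$ encodes the difference between the $i$-th integral in the characterization of $(E_{t}-E_{0})/t$ and the corresponding term in~\eqref{Woderov}. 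By the Riesz isometry, it suffices to show that $|\mathcal{R}_{i}(t,v)|\leq o(1)\|v\|_{\HH^{1}(\Omega_{0})}$ with $o(1)$ uniform in $v$, for each $i=1,2,3$.

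A noteworthy feature here is that the material derivative $\overline{u}'_{0}$ does \emph{not} appear in $E'_{0}$: since the coefficients $\mathrm{A}_{t}-\mathrm{I}$ and $\mathrm{J}_{t}-1$ vanish at $t=0$, differentiating their products with $\overline{u}_{t}$ will only require the continuity of $\overline{u}_{t}$ at $t=0$, not its differentiability. Concretely, to handle $\mathcal{R}_{2}$ I will use the algebraic split
\begin{equation*}
\tfrac{\mathrm{A}_{t}-\mathrm{I}}{t}\nabla\overline{u}_{t}-\mathrm{A}'_{0}\nabla u_{0}=\left(\tfrac{\mathrm{A}_{t}-\mathrm{I}}{t}-\mathrm{A}'_{0}\right)\nabla\overline{u}_{t}+\mathrm{A}'_{0}(\nabla\overline{u}_{t}-\nabla u_{0}).
\end{equation*}
The first summand is $o(1)$ in $\LL^{2}(\Omega_{0})$ by the $\LL^{\infty}$-differentiability of $t\mapsto\mathrm{A}_{t}$ at $t=0$ combined with the uniform $\HH^{1}$-boundedness of $\overline{u}_{t}$; the second summand requires the strong convergence $\overline{u}_{t}\to u_{0}$ in $\HH^{1}(\Omega_{0})$. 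The term $\mathcal{R}_{1}$ is immediate from the $\LL^{2}$-differentiability of $t\mapsto f_{t}\mathrm{J}_{t}$, and $\mathcal{R}_{3}$ is handled by an analogous split that only needs the weaker $\LL^{2}(\Omega_{0})$-convergence of $\overline{u}_{t}$.

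The main obstacle is therefore the strong continuity $\overline{u}_{t}\to u_{0}$ in $\HH^{1}(\Omega_{0})$ as $t\to 0^{+}$. I plan to establish it by a standard variational-inequality argument: test the variational inequality defining $\overline{u}_{t}$ on $(\HH^{1}(\Omega_{0}),\dual{\cdot}{\cdot}_{\mathrm{A}_{t},\mathrm{J}_{t}})$ with $v=u_{0}$, test the variational inequality defining $u_{0}$ on $(\HH^{1}(\Omega_{0}),\dual{\cdot}{\cdot}_{\HH^{1}(\Omega_{0})})$ with $v=\overline{u}_{t}$, add the two inequalities, and exploit the uniform coercivity of $\dual{\cdot}{\cdot}_{\mathrm{A}_{t},\mathrm{J}_{t}}$ for $t$ small, together with the $\LL^{\infty}$-convergences $\mathrm{A}_{t}\to\mathrm{I}$ and $\mathrm{J}_{t}\to 1$, the $\LL^{2}(\Gamma_{0})$-convergence $g_{t}\mathrm{J}_{\mathrm{T}_{t}}\to g$ and the $\LL^{2}(\Omega_{0})$-convergence $f_{t}\mathrm{J}_{t}\to f$. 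This will produce an estimate of the form $c\|\overline{u}_{t}-u_{0}\|_{\HH^{1}(\Omega_{0})}^{2}\leq o(1)$, from which the strong continuity follows and the proof concludes.
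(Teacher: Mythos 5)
Your proposal is correct and follows essentially the same route as the paper: define $E'_0$ via the Riesz representation theorem, estimate the difference quotient of $E_t$ using the differentiability at $t=0$ of $t\mapsto f_t\mathrm{J}_t$, $t\mapsto\mathrm{A}_t$ and $t\mapsto\mathrm{J}_t$, and reduce everything to the strong continuity of $t\mapsto\overline{u}_t$ in $\HH^{1}(\Omega_0)$, obtained by cross-testing the variational inequalities for $\overline{u}_t$ and $u_0$. The only cosmetic difference is that the paper's continuity estimate keeps $\|\overline{u}_t\|_{\HH^{1}(\Omega_0)}$ on the right-hand side and therefore adds a separate uniform bound on $\overline{u}_t$ (taking $v=0$ in the variational inequality), a step your splitting around $u_0$ (or a standard absorption argument) makes unnecessary.
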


\begin{proof}
Using the Riesz representation theorem, we denote by $Z \in\HH^{1}(\Omega_{0})$ the unique solution to the above variational Neumann problem. From linearity we get that
\begin{multline*}
    \left\|\frac{E_{t}-E_{0}}{t}-Z\right\|_{\HH^{1}(\Omega_0)}\leq \left\|\frac{f_{t}\mathrm{J}_{t}-f}{t}-\left(f\mathrm{div}(\boldsymbol{V})+\nabla{f}\cdot \boldsymbol{V}\right)\right\|_{\LL^{2}(\R^{d})}\\
    + \left\|\frac{\mathrm{A}_{t}-\mathrm{I}}{t}-\left(-\nabla{\boldsymbol{V}}-\nabla{\boldsymbol{V}}^{\top}+\mathrm{div}(\boldsymbol{V})\mathrm{I}\right)\right\|_{\LL^{\infty}(\R^{d},\R^{d \times d})}\left\|\overline{u}_t\right\|_{\HH^{1}(\Omega_{0})}\\
    + \left\|-\nabla{\boldsymbol{V}}-\nabla{\boldsymbol{V}}^{\top}+\mathrm{div}(\boldsymbol{V})\mathrm{I}\right\|_{\LL^{\infty}(\R^{d},\R^{d \times d})}\left\| \overline{u}_t-u_{0}\right\|_{\HH^{1}(\Omega_{0})} \\ +\left\|\frac{\mathrm{J}_{t}-1}{t}-\mathrm{div}(\boldsymbol{V})\right\|_{\LL^{\infty}(\R^{d})}\left\| \overline{u}_t\right\|_{\HH^{1}(\Omega_{0})} 
    + \left\|\mathrm{div}(\boldsymbol{V})\right\|_{\LL^{\infty}(\R^{d})}\left\| \overline{u}_t-u_{0}\right\|_{\HH^{1}(\Omega_{0})},
\end{multline*}
for all $t>0$. Therefore, to conclude the proof, we only need to prove the continuity of the map~$t\in\R_{+} \mapsto \overline{u}_t\in~\HH^{1}(\Omega_{0})$ at $t=0$. To this aim let us take $v=u_{0}$ in the weak variational formulation of $\overline{u}_t$ and~$v=\overline{u}_t$ in the weak variational formulation of $u_{0}$ to get
\begin{multline*}
    -\left\|\overline{u}_t-u_{0}\right\|_{\HH^{1}(\Omega_{0})}^{2}+\int_{\Omega_{0}}\left(\mathrm{A}_{t}-\mathrm{I}\right)\nabla{\overline{u}_t}\cdot\nabla{\left(u_{0}-\overline{u}_t\right)}\\+\int_{\Omega_{0}}\left(\mathrm{J}_{t}-1\right)\overline{u}_t\left(u_{0}-\overline{u}_t\right)+\int_{\Gamma_{0}}\left(g_{t}\mathrm {J}_{\mathrm{T}_{t}}-g\right)\left(\left|u_{0}\right|-\left|\overline{u}_t\right|\right)\geq\int_{\Omega_{0}}\left(f_{t}\mathrm{J}_{t}-f\right)\left(u_{0}-\overline{u}_t\right),
\end{multline*}
which leads to
\begin{multline*}
    \left\|\overline{u}_t-u_{0}\right\|_{\HH^{1}(\Omega_{0})}\leq\left(\left\|\mathrm{A}_{t}-\mathrm{I}\right\|_{\LL^{\infty}(\R^{d},\R^{d \times d})}+\left\|\mathrm{J}_{t}-1\right\|_{\LL^{\infty}(\R^{d})}\right)\left\|\overline{u}_t\right\|_{\HH^{1}(\Omega_{0})}\\+C\left\| g_{t}\mathrm {J}_{\mathrm{T}_{t}}-g \right\|_{\LL^{2}(\Gamma_{0})}+\left\| f_{t}\mathrm {J}_{t}-f \right\|_{\LL^{2}(\R^{d})},
\end{multline*}
for all $t\geq0$, where $C>0$ is a constant that depends only on $\Omega_{0}$. Therefore, to conclude the proof, we only need to prove that the map $t\in\R_{+} \mapsto\left\|\overline{u}_t\right\|_{\HH^{1}(\Omega_{0})}\in\R$ is bounded for $t\geq0$ sufficiently small. For this purpose, let us take~$v=0$ in the weak variational formulation of~$\overline{u}_t$ to get that
$$
\int_{\Omega_{0}}\mathrm{A}_{t}\nabla{\overline{u}_t}\cdot\nabla{\overline{u}_t}+\int_{\Omega_{0}}|\overline{u}_t|^{2}\mathrm{J}_{t}\leq\int_{\Omega_{0}}f_{t}\mathrm{J}_{t}\overline{u}_t-\int_{\Gamma_{0}}g_{t}\mathrm {J}_{\mathrm{T}_{t}}|\overline{u}_t|,
$$
for all $t\geq0$, and thus
$$
\left\| \overline{u}_t \right\|_{\HH^{1}(\Omega_{0})}\leq 2\left(\left\| f\right\|_{\HH^{1}(\R^{d})}+2\left\| g\right\|_{\HH^{1}(\R^{d})}\right),
$$
for all $t\geq0$ sufficiently small, which concludes the proof.
\end{proof}

\begin{myRem}\normalfont\label{remarqueneumannvar}
Note that, if $\mathrm{div}((-\nabla{\boldsymbol{V}}-\nabla{\boldsymbol{V}}^{\top}+\mathrm{div}(\boldsymbol{V})\mathrm{I})\nabla{u_{0}})\in\LL^{2}(\Omega_{0})$, then the variational Neumann problem in Lemma~\ref{lem876} corresponds exactly to the weak variational formulation of the Neumann problem given by
\begin{equation*}
\arraycolsep=1.5pt
\left\{
\begin{array}{rcll}
-\Delta E'_{0}+E'_{0}  & = & f\mathrm{div}(\boldsymbol{V})+\nabla{f}\cdot \boldsymbol{V}-\mathrm{div}(\boldsymbol{V})u_{0}+\mathrm{div}\left(\left(-\nabla{\boldsymbol{V}}-\nabla{\boldsymbol{V}}^{\top}+\mathrm{div}(\boldsymbol{V})\mathrm{I}\right)\nabla u_{0}\right)   & \text{ in } \Omega_{0} , \\
\partial_{\nn}E'_{0} & = & \left(\nabla{\boldsymbol{V}}+\nabla{\boldsymbol{V}}^{\top}-\mathrm{div}(\boldsymbol{V})\mathrm{I}\right)\nabla u_{0}\cdot\boldsymbol{\nn} & \text{ on } \Gamma_{0}.
\end{array}
\right.
\end{equation*}
For instance, note that the condition $\mathrm{div}((-\nabla{\boldsymbol{V}}-\nabla{\boldsymbol{V}}^{\top}+\mathrm{div}(\boldsymbol{V})\mathrm{I})\nabla{u_{0}})\in\LL^{2}(\Omega_{0})$ is satisfied when~$u_0\in\HH^{2}(\Omega_{0})$ and $\boldsymbol{V}\in\mathcal{C}^{2,\infty}(\R^d,\R^d):=~\mathcal{C}^{2}(\R^{d},\R^{d})\cap\mathrm{W}^{2,\infty}(\R^{d},\R^{d})$.
\end{myRem}

\subsection{Material and shape directional derivatives}\label{sectshape}

Consider the framework of Subsection~\ref{PerturbTrescatyoefric}. In particular recall that~$g \in \mathrm{H}^2(\R^d)$ with~$g > 0$ \textit{a.e.}\ on $\R^d$. Our aim in this subsection is to {characterize} the material directional derivative, that is, the derivative of the map~$t\in\R_{+} \mapsto \overline{u}_{t} \in \HH^{1}(\Omega_{0})$ at~$t=0$, and then to deduce an expression of the shape directional derivative defined by~$u'_0:=\overline{u}'_0-\nabla{u_{0}}\cdot\boldsymbol{V}$ (which roughly corresponds to the derivative of the map~$t\in\R_{+} \mapsto u_{t} \in \HH^{1}(\Omega_t)$ at~$t=0$). 

In the previous Subsection~\ref{PerturbTrescatyoefric}, since we have expressed $\overline{u}_{t}=\mathrm{prox}_{\Phi(t,\cdot)}(E_{t})$ and characterized in Lemma~\ref{lem876} the derivative of the map~$t\in\R_{+} \mapsto E_{t} \in \HH^{1}(\Omega_{0})$ at~$t=0$, our idea is to use Proposition~\ref{TheoABC2018} in order to derive the material directional derivative. To this aim the twice epi-differentiability of the parameterized Tresca friction functional~$\Phi$ has to be investigated as we did in our previous paper~\cite{BCJDC} from which the next two lemmas are extracted.

\begin{myLem}[Second-order difference quotient function of $\Phi$]\label{epidiffoffunctionG}
Consider the framework of Subsection~\ref{PerturbTrescatyoefric}. For all $t>0$, $u\in \HH^{1}(\Omega)$ and~$v\in\partial\Phi(0,\cdot)(u)$, it holds that
\begin{equation}\label{Delta2}
      \displaystyle\Delta_{t}^{2}\Phi(u|v)(w)=\int_{\Gamma_{0}}\Delta_{t}^{2}G(s)(u(s)|\partial_{\nn}v(s))(w(s)) \, \mathrm{d}s,
\end{equation}
for all $w\in \HH^{1}(\Omega)$, where, for almost all $s\in\Gamma_{0}$, $\Delta_{t}^{2}G(s)(u(s)|\partial_{\nn}v(s))$ stands for the second-order difference quotient function of $G(s)$ at $u(s)\in\R$ for $\partial_{\nn}v(s) \in g(s)\partial|\mathord{\cdot} |(u(s))$, with~$G(s)$ defined by
$$ 
\fonction{G(s)}{\mathbb{R}_{+}\times\mathbb{R}}{\R}{(t,x)}{G(s)(t,x):=g_{t}(s)\mathrm {J}_{\mathrm{T}_{t}}(s)|x|.}
$$
\end{myLem}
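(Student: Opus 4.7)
The plan is to unwind both sides of~\eqref{Delta2} from their definitions and reduce the proof to an application of Green's formula, once we know how to represent elements of $\partial \Phi(0, \cdot)(u)$ via their normal traces. By definition,
$$
\Delta_{t}^{2}\Phi(u|v)(w) = \frac{\Phi(t, u+tw) - \Phi(t,u) - t \dual{v}{w}_{\HH^1(\Omega_0)}}{t^{2}} = \frac{1}{t^2}\int_{\Gamma_0} g_t \mathrm{J}_{\mathrm{T}_t}\bigl(|u+tw|-|u|\bigr) - \frac{\dual{v}{w}_{\HH^1(\Omega_0)}}{t},
$$
so the entire proof reduces to rewriting the scalar product $\dual{v}{w}_{\HH^1(\Omega_0)}$ as a boundary integral of the form $\int_{\Gamma_0}\partial_{\nn}v \cdot w$.

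First I would characterize the subdifferential $\partial \Phi(0,\cdot)(u) = \partial \phi_0(u)$. Taking test functions $\zeta \in \mathcal{C}^{\infty}_0(\Omega_0)$ in the subdifferential inequality and using that the trace of $\zeta$ vanishes on $\Gamma_0$ gives $-\Delta v + v = 0$ in the distribution sense on $\Omega_0$. In particular $\Delta v \in \LL^2(\Omega_0)$, so the divergence formula (Proposition~\ref{divergenceformula}) applied to $\nabla v$ yields a normal trace $\partial_{\nn} v$ on $\Gamma_0$. Re-inserting arbitrary $z \in \HH^1(\Omega_0)$ in the subdifferential inequality and invoking surjectivity/density of the trace operator, one obtains that in fact $\partial_{\nn} v \in \LL^2(\Gamma_0)$ with the pointwise inclusion $\partial_{\nn}v(s) \in g(s)\,\partial |\cdot|(u(s))$ for almost every $s \in \Gamma_0$ (this is precisely the characterization carried out in~\cite{BCJDC}, which I would cite and adapt). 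Applying the divergence formula once more with $-\Delta v + v = 0$ then gives the identity
$$
\dual{v}{w}_{\HH^1(\Omega_0)} = \int_{\Omega_0}\nabla v \cdot \nabla w + \int_{\Omega_0} vw = \int_{\Gamma_0}\partial_{\nn}v\cdot w, \qquad \forall w \in \HH^1(\Omega_0).
$$

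With this identity in hand, I would plug it back into the first display and combine under a single boundary integral:
$$
\Delta_{t}^{2}\Phi(u|v)(w) = \int_{\Gamma_0} \frac{g_t(s)\mathrm{J}_{\mathrm{T}_t}(s)\bigl(|u(s)+tw(s)|-|u(s)|\bigr) - t\,\partial_{\nn}v(s)\,w(s)}{t^2}\,\mathrm{d}s.
$$
Recognizing $g_t(s)\mathrm{J}_{\mathrm{T}_t}(s)|x| = G(s)(t,x)$ and $g(s)|x| = G(s)(0,x)$, the integrand is exactly $\Delta_{t}^{2}G(s)(u(s)|\partial_{\nn}v(s))(w(s))$, which yields~\eqref{Delta2}. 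The pointwise inclusion $\partial_{\nn}v(s) \in g(s)\,\partial|\cdot|(u(s))$ obtained above is precisely what ensures that the scalar second-order difference quotient $\Delta_{t}^{2}G(s)(u(s)|\partial_{\nn}v(s))$ is meaningful in the sense of Definition~\ref{epidiffpara}.

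The main obstacle is not the computation itself, which is essentially bookkeeping, but rather ensuring the $\LL^2$-regularity of the normal trace $\partial_{\nn}v$ and the pointwise subdifferential inclusion on $\Gamma_0$, since a priori the subdifferential inequality gives only $\partial_{\nn}v \in \HH^{-1/2}(\Gamma_0)$. This regularity upgrade is the technical heart of the statement, and I would lift it verbatim from~\cite{BCJDC}, whose argument applies unchanged because $\phi_0$ here is independent of $t$.
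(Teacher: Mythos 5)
Your argument is correct and is essentially the proof the paper relies on (the lemma is quoted from~\cite{BCJDC}, where the same route is taken): one characterizes $v\in\partial\Phi(0,\cdot)(u)$ by testing with compactly supported functions to get $-\Delta v+v=0$, uses the divergence formula plus the trace argument to upgrade $\partial_{\nn}v$ to an $\LL^{2}(\Gamma_{0})$ function satisfying $\partial_{\nn}v(s)\in g(s)\partial|\cdot|(u(s))$ a.e., so that $\dual{v}{w}_{\HH^{1}(\Omega_{0})}=\int_{\Gamma_{0}}\partial_{\nn}v\,w$, and then the identity~\eqref{Delta2} follows by direct substitution into the definition of the second-order difference quotients. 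No gap: the only technical point, the $\LL^{2}$-regularity of the normal trace and the pointwise inclusion, is exactly the part you correctly attribute to (and may legitimately import from) the cited earlier work.
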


\begin{myLem}[Second-order epi-derivative of $G(s)$]\label{épidiffgabs}
Consider the framework of Subsection~\ref{PerturbTrescatyoefric} and assume that, for almost all $s\in\Gamma_{0}$, $g$ has a directional derivative at $s$ in any direction. Then, for almost all $s\in\Gamma_{0}$, the map~$G(s)$ is twice epi-differentiable at any~$x\in\mathbb{R}$ and for all $y\in g(s)\partial |\mathord{\cdot} |(x)$ with
$$
\displaystyle \mathrm{D}_{e}^{2}G(s)(x|y)(z)=\iota_{\mathrm{K}_{ x,\frac{y}{g(s)}}}(z)+\left(\nabla{g}(s)\cdot \boldsymbol{V}(s)+g(s)\mathrm{div}_{\Gamma_0}(\boldsymbol{V})(s)\right)\frac{y}{g(s)}z,
$$
for all $z\in\mathbb{R}$, where $\iota_{\mathrm{K}_{ x,\frac{y}{g(s)}}}$ stands for the indicator function of the nonempty closed convex subset~$\mathrm{K}_{ x,\frac{y}{g(s)}}$ of $\R$ (see Example~\ref{epidiffabs}).
\end{myLem}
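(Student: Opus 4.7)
The plan is to reduce everything to the known Mosco epi-convergence of the classical absolute value map (Example~\ref{epidiffabs}) via a simple algebraic decomposition. Fix $s \in \Gamma_0$ at which $g$ admits a directional derivative in every direction, $x \in \R$, and $y \in g(s) \partial |\mathord{\cdot}|(x)$. Setting $y^* := y / g(s) \in \partial |\mathord{\cdot}|(x)$ and $a_t := g_t(s) \mathrm{J}_{\mathrm{T}_t}(s)$, the differentiability properties listed before Lemma~\ref{lem876} (combined with the chain rule) give $a_0 = g(s)$ and
\begin{equation*}
\lim_{t \to 0^+} \frac{a_t - a_0}{t} = a'_0 := \nabla{g}(s) \cdot \boldsymbol{V}(s) + g(s) \mathrm{div}_{\Gamma_0}(\boldsymbol{V})(s).
\end{equation*}
Writing $y = a_0 y^*$ in the numerator of $\Delta_t^2 G(s)(x|y)(z)$ and adding/subtracting $t a_t y^* z$ yields the key identity
\begin{equation*}
\Delta_t^2 G(s)(x|y)(z) = a_t \, \Delta_t^2 |\mathord{\cdot}|(x|y^*)(z) + \frac{a_t - a_0}{t} \, y^* z, \qquad \forall t > 0, \; \forall z \in \R.
\end{equation*}

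For the first term, Example~\ref{epidiffabs} asserts that $(\Delta_t^2 |\mathord{\cdot}|(x|y^*))_{t>0}$ Mosco epi-converges to $\iota_{\mathrm{K}_{x, y^*}}$, and since $a_t \to g(s) > 0$, combined with the nonnegativity of $\Delta_t^2 |\mathord{\cdot}|(x|y^*)$ (an immediate consequence of the subgradient inequality for the convex map $|\mathord{\cdot}|$ at $x$ with $y^* \in \partial|\mathord{\cdot}|(x)$), the family $(a_t \, \Delta_t^2 |\mathord{\cdot}|(x|y^*))_{t>0}$ Mosco epi-converges to $g(s) \, \iota_{\mathrm{K}_{x, y^*}} = \iota_{\mathrm{K}_{x, y^*}}$. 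For the second term, $z \mapsto \frac{a_t - a_0}{t} y^* z$ is a family of continuous linear maps which converges uniformly on every bounded subset of $\R$ to the continuous linear map $z \mapsto a'_0 y^* z$.

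Combining these two ingredients yields the Mosco epi-convergence of $(\Delta_t^2 G(s)(x|y))_{t>0}$ to $\iota_{\mathrm{K}_{x, y^*}}(z) + a'_0 y^* z$, which matches the stated formula upon substituting back $y^* = y/g(s)$. The main technical obstacle is that Mosco epi-convergence interacts nontrivially with multiplication by a positive convergent scalar sequence, and with addition of a continuous perturbation converging uniformly on bounded sets; this is settled by a direct verification of the liminf/limsup conditions of Definition~\ref{Mosco}, where the nonnegativity of $\Delta_t^2 |\mathord{\cdot}|(x|y^*)$ rules out any pathological cancellation. The overall argument follows the same lines as in our previous work~\cite{BCJDC}.
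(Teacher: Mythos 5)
Your proof is correct, and it is essentially the argument the paper itself relies on: the lemma is stated without proof here and imported from~\cite{BCJDC}, where the same reduction is used, namely the factorization $\Delta_t^2 G(s)(x|y)(z)=a_t\,\Delta_t^{2}|\mathord{\cdot}|\bigl(x\big|\tfrac{y}{g(s)}\bigr)(z)+\tfrac{a_t-a_0}{t}\,\tfrac{y}{g(s)}\,z$ followed by the known epi-convergence of the absolute-value quotients (Example~\ref{epidiffabs}), the nonnegativity of $\Delta_t^{2}|\mathord{\cdot}|$ and $g(s)>0$ making the scalar multiplication and the linear perturbation harmless (all the easier since in $\R$ weak and strong convergence coincide). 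The one point to tighten is the justification of $\tfrac{a_t-a_0}{t}\to\nabla g(s)\cdot\boldsymbol{V}(s)+g(s)\mathrm{div}_{\Gamma_0}(\boldsymbol{V})(s)$: this is a pointwise statement at the fixed $s$ and does not follow from the $\LL^{2}(\Gamma_0)$-differentiability of $t\mapsto g_t\mathrm{J}_{\mathrm{T}_t}$ listed before Lemma~\ref{lem876}, but rather from the lemma's hypothesis that $g$ has directional derivatives at $s$ (giving $\lim_{t\to0^{+}}(g_t(s)-g(s))/t$) combined with the explicit smooth $t$-dependence of $\mathrm{J}_{\mathrm{T}_t}(s)$ with $\mathrm{J}_{\mathrm{T}_0}(s)=1$ and derivative $\mathrm{div}_{\Gamma_0}(\boldsymbol{V})(s)$.
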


We are now in a position to derive our first main result.

\begin{myTheorem}[Material directional derivative]\label{Lagderiv}
Consider the framework of Subsection~\ref{PerturbTrescatyoefric} and assume that:
\begin{enumerate}[label={\rm (\roman*)}]
    \item For almost all $s\in\Gamma_{0}$, $g$ has a directional derivative at $s$ in any direction.
    \item $\Phi$ is twice epi-differentiable at $u_{0}$ for $E_{0}-u_{0}\in\partial \Phi(0,\cdot)(u_{0})$ with
\begin{equation}\label{hypoth1}
\displaystyle\mathrm{D}_{e}^{2}\Phi(u_{0}|E_{0}-u_{0})(w)=\int_{\Gamma_{0}}\mathrm{D}_{e}^{2}G(s)(u_{0}(s)|\partial_{\nn}(E_{0}-u_{0})(s))(w(s)) \, \mathrm{d}s,
\end{equation}    
for all $w\in \HH^{1}(\Omega)$.\label{hypo4}
\end{enumerate}
Then the map $t\in\R_{+} \mapsto \overline{u}_t\in\HH^{1}(\Omega_{0})$ is differentiable at $t=0$, and its derivative (that is, the material directional derivative), denoted by $\overline{u}'_0\in\HH^{1}(\Omega_{0})$, is the unique solution to the variational inequality
\begin{multline}\label{matderivgeneral}
        \dual{\overline{u}'_0}{v-\overline{u}'_0}_{\HH^{1}(\Omega_{0})} \geq\int_{\Omega_{0}}\boldsymbol{V}\cdot\nabla{u_0}\left(v-\overline{u}'_0\right) \\
    -\int_{\Omega_{0}}\left(\left(-\nabla{\boldsymbol{V}}-\nabla{\boldsymbol{V}}^{\top}+\mathrm{div}(\boldsymbol{V})\mathrm{I}\right)\nabla{u_{0}}-\Delta u_0 \boldsymbol{V}\right)\cdot\nabla(v-\overline{u}'_0)\\
    +\int_{\Gamma_{0}}\left(\boldsymbol{V}\cdot \boldsymbol{\nn} \left(f-u_0\right)+\left(\frac{\nabla{g}}{g}\cdot\boldsymbol{V}+\mathrm{div}_{\Gamma_0}(\boldsymbol{V})\right)\partial_{\nn}u_{0}\right)\left(v-\overline{u}'_0\right), \qquad \forall v \in\mathcal{K}_{u_{0},\frac{\partial_{\nn}\left(E_{0}-u_{0}\right)}{g}},
\end{multline}
where $\mathcal{K}_{u_{0},\frac{\partial_{\nn}\left(E_{0}-u_{0}\right)}{g}}$ is the nonempty closed convex subset of $\HH^{1}(\Omega_0)$ defined by
$$
\mathcal{K}_{u_{0},\frac{\partial_{\nn}\left(E_{0}-u_{0}\right)}{g}} := \left\{ v\in \HH^{1}(\Omega_{0})\mid v\leq 0 \text{ \textit{a.e.} on }\Gamma^{u_{0},g}_{\mathrm{S-}}\text{, } v\geq 0 \text{ \textit{a.e.} on }\Gamma^{u_{0},g}_{\mathrm{S+}}\text{, } v=0 \text{ \textit{a.e.} on } \Gamma^{u_{0},g}_{\mathrm{D}} \right\},
$$
where $\Gamma_0$ is decomposed, up to a null set, as~$\Gamma^{u_{0},g}_{\mathrm{N}}\cup
\Gamma^{u_{0},g}_{\mathrm{D}}\cup\Gamma^{u_{0},g}_{\mathrm{S-}}\cup\Gamma^{u_{0},g}_{\mathrm{S+}}$, where
$$
\begin{array}{l}
\Gamma^{u_{0},g}_{\mathrm{N}}:=\left\{s\in\Gamma_{0} \mid  u_{0}(s)\neq0\right \}, \\
\Gamma^{u_{0},g}_{\mathrm{D}}:=\left\{s\in\Gamma_{0} \mid  u_{0}(s)=0 \text{ and } \partial_{\nn}u_{0}(s)\in\left(-g(s),g(s)\right)\right \}, \\
\Gamma^{u_{0},g}_{\mathrm{S-}}:=\left\{s\in\Gamma_{0} \mid u_{0}(s)=0 \text{ and } \partial_{\nn}u_{0}(s)=g(s)\right \}, \\
\Gamma^{u_{0},g}_{\mathrm{S+}}:=\left\{s\in\Gamma_{0} \mid  u_{0}(s)=0 \text{ and } \partial_{\nn}u_{0}(s)=-g(s)\right \}.
\end{array}
$$
\end{myTheorem}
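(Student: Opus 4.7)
The plan is to invoke Proposition~\ref{TheoABC2018} on the representation $\overline{u}_t=\mathrm{prox}_{\Phi(t,\cdot)}(E_t)$ established in Subsection~\ref{PerturbTrescatyoefric}, and then translate the resulting proximal characterization of $\overline{u}'_0$ into the variational inequality~\eqref{matderivgeneral}.

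First I would verify the three hypotheses of Proposition~\ref{TheoABC2018}. Condition~(i), the differentiability of $t\mapsto E_t$ at $t=0$, is Lemma~\ref{lem876}. Condition~(ii), the twice epi-differentiability of $\Phi$ at $u_0$ for $E_0-u_0$, is the standing assumption~\ref{hypo4}. For condition~(iii), combining~\eqref{hypoth1} with Lemma~\ref{épidiffgabs} yields, for every $w\in\HH^1(\Omega_0)$,
\[
\mathrm{D}_e^2\Phi(u_0|E_0-u_0)(w)=\iota_{\mathcal{K}}(w)+\int_{\Gamma_0}\Bigl(\tfrac{\nabla g}{g}\cdot\boldsymbol{V}+\mathrm{div}_{\Gamma_0}(\boldsymbol{V})\Bigr)\partial_{\nn}(E_0-u_0)\,w,
\]
where $\mathcal{K}$ is the pointwise convex set built from the sets $\mathrm{K}_{u_0(s),\partial_{\nn}(E_0-u_0)(s)/g(s)}$ of Example~\ref{epidiffabs}. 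Since $0\in\mathcal{K}$ and the linear term vanishes at $w=0$, the function is proper. Proposition~\ref{TheoABC2018} then delivers $\overline{u}'_0=\mathrm{prox}_{\mathrm{D}_e^2\Phi(u_0|E_0-u_0)}(E'_0)$.

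Next I would identify $\mathcal{K}$ with the set in the statement. The variational equation defining $E_t$ degenerates at $t=0$ to $\langle E_0,v\rangle_{\HH^1(\Omega_0)}=\int_{\Omega_0}fv$ for all $v$, so $E_0$ solves the homogeneous Neumann problem and $\partial_{\nn}E_0=0$, whence $\partial_{\nn}(E_0-u_0)=-\partial_{\nn}u_0$ a.e.\ on $\Gamma_0$. The Tresca boundary conditions $|\partial_{\nn}u_0|\leq g$ and $u_0\partial_{\nn}u_0+g|u_0|=0$ then force, pointwise a.e., $\partial_{\nn}u_0/g\in\{-\mathrm{sign}(u_0)\}$ when $u_0\ne 0$, and $\partial_{\nn}u_0/g\in[-1,1]$ when $u_0=0$. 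Plugging the corresponding values of $\partial_{\nn}(E_0-u_0)/g$ into the case analysis of Example~\ref{epidiffabs} reproduces the partition $\Gamma^{u_0,g}_{\mathrm{N}}\cup\Gamma^{u_0,g}_{\mathrm{D}}\cup\Gamma^{u_0,g}_{\mathrm{S-}}\cup\Gamma^{u_0,g}_{\mathrm{S+}}$ and the stated pointwise constraints on $v$.

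Finally, I would unfold the proximal identity through the subdifferential inequality $E'_0-\overline{u}'_0\in\partial(\mathrm{D}_e^2\Phi(u_0|E_0-u_0))(\overline{u}'_0)$; for every admissible $v$, after substituting $\partial_{\nn}(E_0-u_0)=-\partial_{\nn}u_0$, this reads
\[
\langle\overline{u}'_0,v-\overline{u}'_0\rangle_{\HH^1(\Omega_0)}\geq\langle E'_0,v-\overline{u}'_0\rangle_{\HH^1(\Omega_0)}+\int_{\Gamma_0}\Bigl(\tfrac{\nabla g}{g}\cdot\boldsymbol{V}+\mathrm{div}_{\Gamma_0}(\boldsymbol{V})\Bigr)\partial_{\nn}u_0(v-\overline{u}'_0).
\]
Inserting the expression of $\langle E'_0,\cdot\rangle_{\HH^1(\Omega_0)}$ from Lemma~\ref{lem876} produces a version of~\eqref{matderivgeneral}; to match it exactly, I would use $\Delta u_0=u_0-f$ and Proposition~\ref{divergenceformula} to rewrite $\int_{\Omega_0}\Delta u_0\,\boldsymbol{V}\cdot\nabla(v-\overline{u}'_0)$, after which the bulk terms $f\,\mathrm{div}\boldsymbol{V}+\nabla f\cdot\boldsymbol{V}$ and $\mathrm{div}(\boldsymbol{V})u_0$ cancel against the divergence-formula remainder, and the boundary contribution $\int_{\Gamma_0}\boldsymbol{V}\cdot\boldsymbol{\nn}(f-u_0)(v-\overline{u}'_0)$ appears. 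Uniqueness of $\overline{u}'_0$ is immediate from the proximal construction. The main obstacle I anticipate is the identification step, since Lemma~\ref{épidiffgabs} produces constraints indexed by $\partial_{\nn}(E_0-u_0)/g$ whereas the theorem is formulated in the more natural variables $u_0$ and $\partial_{\nn}u_0$; giving $\partial_{\nn}E_0=0$ a rigorous meaning deserves care since Subsection~\ref{PerturbTrescatyoefric} only guarantees $E_0\in\HH^1(\Omega_0)$ a priori.
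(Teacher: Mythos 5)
Your proposal is correct and follows essentially the same route as the paper's proof: check the three hypotheses of Proposition~\ref{TheoABC2018} via Lemma~\ref{lem876}, assumption (ii) combined with Lemma~\ref{épidiffgabs} (properness being clear since the epi-derivative is an indicator of a nonempty set plus a continuous linear term), deduce $\overline{u}'_0=\mathrm{prox}_{\mathrm{D}_{e}^{2}\Phi(u_{0}|E_{0}-u_{0})}(E'_0)$, unfold the proximal characterization into a variational inequality, and pass to~\eqref{matderivgeneral} using the divergence formula together with $-\Delta u_0+u_0=f$, with uniqueness from the projection/Stampacchia argument. The only difference is that you make explicit the identification $\partial_{\nn}E_0=0$, hence $\partial_{\nn}(E_0-u_0)=-\partial_{\nn}u_0$ and the stated form of $\mathcal{K}_{u_{0},\frac{\partial_{\nn}(E_{0}-u_{0})}{g}}$, a step the paper leaves implicit; your worry about its rigor is easily settled since $\Delta E_0=E_0-f\in\LL^{2}(\Omega_0)$, so the normal trace exists by Proposition~\ref{divergenceformula} and vanishes by the variational formulation of $E_0$.
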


\begin{proof}
The proof is almost identical to~\cite[Theorem~3.21 p.19]{BCJDC}. From Hypothesis~\ref{hypo4} and Lemma~\ref{épidiffgabs}, it follows that
\begin{multline*}
\displaystyle \mathrm{D}_{e}^{2}\Phi(u_{0}|E_{0}-u_{0})(w)=\iota_{\mathcal{K}_{u_{0},\frac{\partial_{\nn}(E_{0}-u_{0})}{g}}}(w)\\+\int_{\Gamma_{0}}\left(\nabla{g}(s)\cdot \boldsymbol{V}(s)+g(s)\mathrm{div}_{\Gamma_0}(\boldsymbol{V})(s)\right)\frac{\partial_{\nn}(E_{0}-u_{0})(s)}{g(s)}w(s)\mathrm{d}s,
\end{multline*}
for all $w\in \HH^{1}(\Omega_{0})$, where $\mathcal{K}_{u_{0},\frac{\partial_{\nn}(E_{0}-u_{0})}{g}}$ is the nonempty closed convex subset of $\HH^{1}(\Omega_{0})$ defined by 
$$
\mathcal{K}_{u_{0},\frac{\partial_{\nn}(E_{0}-u_{0})}{g}}:=\left\{ w\in \HH^{1}(\Omega_{0})\mid w(s)\in \mathrm{K}_{u_{0}(s),\frac{\partial_{\nn}(E_{0}-u_{0})(s)}{\scriptstyle{g(s)}}} \text{ for almost all }s\in\Gamma_{0} \right\}, 
$$
which coincides with the definition given in Theorem~\ref{Lagderiv}. Moreover $\mathrm{D}_{e}^{2}\Phi(u_{0}|E_{0}-u_{0})$ is a proper lower semi-continuous convex function on $\HH^{1}(\Omega_{0})$, and from Lemma~\ref{lem876}, the map $t\in\mathbb{R}^{+}\mapsto E_{t}\in \HH^{1}(\Omega_{0})$ is differentiable at $t=0$, with its derivative~$E'_{0}\in \HH^{1}(\Omega_{0})$ being the unique solution to the variational Neumann problem~\eqref{Woderov}. Thus, using Theorem~\ref{TheoABC2018}, the map $t\in\mathbb{R}^{+}\mapsto \overline{u}_{t}\in \HH^{1}(\Omega_{0})$ is differentiable at $t=0$, and its derivative~$\overline{u}'_0\in\HH^{1}(\Omega_{0})$ satisfies
$$
\overline{u}'_0=\mathrm{prox}_{\mathrm{D}_{e}^{2}\Phi(u_{0}|E_{0}-u_{0})}(E_{0}').
$$
From the definition of the proximal operator (see Definition~\ref{proxi}), this leads to
$$
\displaystyle \dual{ E_{0}'-\overline{u}'_0}{v-\overline{u}'_0}_{\HH^{1}(\Omega_{0})}\leq \mathrm{D}_{e}^{2}\Phi(u_{0}|E_{0}-u_{0})(v) -\mathrm{D}_{e}^{2}\Phi(u_{0}|E_{0}-u_{0})(\overline{u}'_0),
$$
for all $v\in \HH^{1}(\Omega_{0})$. Hence one gets
\begin{multline}\label{signorini2casfaible}
    \dual{\overline{u}'_0}{v-\overline{u}'_0}_{\HH^{1}(\Omega_{0})} \geq\int_{\Omega_{0}}\mathrm{div}(f\boldsymbol{V})\left(v-\overline{u}'_0\right)-\int_{\Omega_{0}}\mathrm{div}(\boldsymbol{V})u_{0}\left(v-\overline{u}'_0\right)\\
    -\int_{\Omega_{0}}\left(-\nabla{\boldsymbol{V}}-\nabla{\boldsymbol{V}}^{\top}+\mathrm{div}(\boldsymbol{V})\mathrm{I}\right)\nabla{u_{0}}\cdot\nabla(v-\overline{u}'_0)\\
    +\int_{\Gamma_{0}}\left(\nabla{g}\cdot\boldsymbol{V}+ g\mathrm{div}_{\Gamma_0}(\boldsymbol{V})  \right)\frac{\partial_{\nn}u_{0}}{g}\left(v-\overline{u}'_0\right),
\end{multline}
for all $v\in\mathcal{K}_{u_{0},\frac{\partial_{\nn}\left(E_{0}-u_{0}\right)}{g}}$. Using the divergence formula (see Proposition~\ref{divergenceformula}) and the equality~$-\Delta u_0+u_0=f$ in $\LL^2(\Omega_0)$, we obtain that~$\overline{u}'_0$ is solution to~\eqref{matderivgeneral} and the uniqueness follows from the classical Stampacchia theorem~\cite{BREZ}.
\end{proof}

\begin{myRem}\normalfont\label{Remarquenotwice}
   Note that Equality~\eqref{hypoth1} in the second assumption of Theorem~\ref{Lagderiv} exactly corresponds to the inversion of the symbols $\mathrm{ME}\text{-}\mathrm{lim}$ and $\int_{\Gamma_0}$ in Equality~\eqref{Delta2}. In a general context, this is an open question. Nevertheless sufficient conditions can be derived and we refer to~\cite[Appendix~B]{4ABC} and~\cite[Appendix~A]{BCJDC} for examples.
\end{myRem}

\begin{myRem}\normalfont\label{remnonlinear}
Consider the framework of Theorem~\ref{Lagderiv} which is dependent of~$\boldsymbol{V} \in \mathcal{C}^{1,\infty}(\R^{d},\R^{d})$ and let us denote by~$\overline{u}'_0(\boldsymbol{V}) := \overline{u}'_0$. One can easily see that
$$
\overline{u}'_0( \alpha_1 \boldsymbol{V_{1}}+\alpha_2 \boldsymbol{V_{2}})= \alpha_1 \overline{u}'_0(\boldsymbol{V_{1}})+\alpha_2\overline{u}'_0(\boldsymbol{V_{2}}).
$$
for any $\boldsymbol{V_{1}}$, $\boldsymbol{V_{2}}\in\mathcal{C}^{1,\infty}(\R^{d},\R^{d})$ and for any nonnegative real numbers~$\alpha_1\geq0$, $\alpha_2 \geq 0$. However, this is not true for negative real numbers and justify why, in the present work, we call $\overline{u}'_0$ as material \textit{directional} derivative (instead of simply material derivative as usually in the literature). This nonlinearity is standard in shape optimization for variational inequalities (see, e.g.,~\cite{HINTERMULLERLAURAIN} or~\cite[Section 4]{SOKOZOL}).
\end{myRem}

\begin{comment}
\begin{myCor}\label{LagderivH2}
Consider the framework Theorem~\ref{Lagderiv} with the additional assumption that $u_0\in\HH^2(\Omega_0)$. Then the material derivative $\overline{u}'_0\in\HH^1(\Omega_0)$ is solution to the variational inequality
\begin{multline*}
        \dual{\overline{u}'_0}{v-\overline{u}'_0}_{\HH^{1}(\Omega_{0})} \geq\int_{\Omega_{0}}\boldsymbol{V}\cdot\nabla{u_0}\left(v-\overline{u}'_0\right)+\int_{\Omega_0}\Delta u_0 \boldsymbol{V}\cdot\nabla{\left(v-\overline{u}'_0\right)}
    \\+\int_{\Omega_{0}}\mathrm{div}\left(\left(-\nabla{\boldsymbol{V}}-\nabla{\boldsymbol{V}}^{\top}+\mathrm{div}(\boldsymbol{V})\mathrm{I}\right)\nabla{u_{0}}\right)\left(v-\overline{u}'_0\right)\\
    +\int_{\Gamma_{0}}\left(\boldsymbol{V}\cdot\nn\left(f-u_0\right)+\left(\nabla{\boldsymbol{V}+\nabla{\boldsymbol{V}^{\top}}}\right)\nabla{u_0}\cdot\nn+\left(\frac{\nabla{g}}{g}\cdot\boldsymbol{V}-\nabla{\boldsymbol{V}}\nn\cdot\nn\right)\partial_{\nn}u_{0}\right)\left(v-\overline{u}'_0\right),
\end{multline*}
for all $v\in\mathcal{K}_{u_{0},\frac{\partial_{\nn}\left(E_{0}-u_{0}\right)}{g}}$.
\end{myCor}

\begin{proof}
If $u_0$ is in $\HH^2(\Omega_0)$, then $\mathrm{div}\left(\left(-\nabla{\boldsymbol{V}}-\nabla{\boldsymbol{V}}^{\top}+\mathrm{div}(\boldsymbol{V})\mathrm{I}\right)\nabla{u_{0}}\right)\in\LL^2(\Omega_0)$. Thus using divergence formula (see Proposition~\ref{divergenceformula}), one deduces the result.
\end{proof}
\end{comment}

The presentation of Theorem~\ref{Lagderiv} can be improved under additional regularity assumptions.

\begin{myCor}\label{LagderivSigno}
Consider the framework of Theorem~\ref{Lagderiv} with the additional assumptions that~$u_{0}\in\HH^{3}(\Omega_{0})$ and $\boldsymbol{V} \in \mathcal{C}^{2,\infty}(\R^{d},\R^{d}):=~\mathcal{C}^{2}(\R^{d},\R^{d})\cap\mathrm{W}^{2,\infty}(\R^{d},\R^{d})$.
Then $\overline{u}'_0\in\HH^{1}(\Omega_{0})$ is the unique weak solution to the scalar Signorini problem given by
\begin{equation}\label{caractu0DNT}
\arraycolsep=2pt
\left\{
\begin{array}{rcll}
-\Delta \overline{u}'_0+\overline{u}'_0 & = & -\Delta\left(\boldsymbol{V}\cdot\nabla{u_{0}}\right)+\boldsymbol{V}\cdot\nabla{u_{0}}  & \text{ in } \Omega_{0} , \\
\overline{u}'_0 & = & 0  & \text{ on } \Gamma^{u_{0},g}_{\mathrm{D}}, \\
\partial_{\nn} \overline{u}'_0 & = & h^m(\boldsymbol{V})  & \text{ on } \Gamma^{u_{0},g}_{\mathrm{N}}, \\
 \overline{u}'_0\leq0\text{, } \partial_{\nn}\overline{u}'_0\leq h^m(\boldsymbol{V}) \text{ and } \overline{u}'_0\left(\partial_{\nn}\overline{u}'_0-h^m(\boldsymbol{V})\right) & = & 0  & \text{ on } \Gamma^{u_{0},g}_{\mathrm{S-}}, \\
 \overline{u}'_0\geq0\text{, } \partial_{\nn}\overline{u}'_0\geq h^m(\boldsymbol{V}) \text{ and } \overline{u}'_0\left(\partial_{\nn}\overline{u}'_0- h^m(\boldsymbol{V})\right) & = & 0  & \text{ on } \Gamma^{u_{0},g}_{\mathrm{S+}},
\end{array}
\right.
\end{equation}
where $h^m(\boldsymbol{V}):= (\frac{\nabla{g}}{g}\cdot \boldsymbol{V}-\nabla{\boldsymbol{V}}\boldsymbol{\nn}\cdot\boldsymbol{\nn})\partial_{\nn}u_{0}+ (\nabla{\boldsymbol{V}}+\nabla{\boldsymbol{V}}^{\top})\nabla{u_{0}}\cdot\boldsymbol{\nn}\in\LL^{2}(\Gamma_{0})$.
\end{myCor}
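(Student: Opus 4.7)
The plan is to start from the variational inequality~\eqref{matderivgeneral} delivered by Theorem~\ref{Lagderiv} and to recast it, via an integration by parts on the gradient term, as the weak variational formulation (Definition~\ref{WeaksolutionSigno}) of the Signorini problem~\eqref{caractu0DNT}; uniqueness will then be provided by Proposition~\ref{existenceunicitepbDNS}. First I would observe that the convex set $\mathcal{K}_{u_{0},\frac{\partial_{\nn}(E_{0}-u_{0})}{g}}$ of Theorem~\ref{Lagderiv} coincides exactly with the convex set $\mathcal{K}_{0}^{1}(\Omega_{0})$ of Definition~\ref{WeaksolutionSigno} associated with the decomposition $\Gamma_{\mathrm{N}}^{u_{0},g}\cup\Gamma_{\mathrm{D}}^{u_{0},g}\cup\Gamma_{\mathrm{S-}}^{u_{0},g}\cup\Gamma_{\mathrm{S+}}^{u_{0},g}$ and with Dirichlet/obstacle datum $w=0$.

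Next I would introduce the auxiliary vector field
$$
\boldsymbol{W}:=\bigl(\nabla\boldsymbol{V}+\nabla\boldsymbol{V}^{\top}-\mathrm{div}(\boldsymbol{V})\mathrm{I}\bigr)\nabla u_{0}+(\Delta u_{0})\boldsymbol{V},
$$
which is exactly what sits in the gradient term of~\eqref{matderivgeneral}. Thanks to the enhanced regularity $u_{0}\in\HH^{3}(\Omega_{0})$ and $\boldsymbol{V}\in\mathcal{C}^{2,\infty}(\R^{d},\R^{d})$, Proposition~\ref{divergencelaplacien} yields $\mathrm{div}(\boldsymbol{W})=\Delta(\boldsymbol{V}\cdot\nabla u_{0})\in\LL^{2}(\Omega_{0})$, so that $\boldsymbol{W}\in\HH_{\mathrm{div}}(\Omega_{0})$ and the divergence formula (Proposition~\ref{divergenceformula}) can be legitimately applied. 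Doing so turns the gradient term of~\eqref{matderivgeneral} into a bulk contribution $-\int_{\Omega_{0}}\Delta(\boldsymbol{V}\cdot\nabla u_{0})(v-\overline{u}'_{0})$, which combines with the existing $\int_{\Omega_{0}}\boldsymbol{V}\cdot\nabla u_{0}(v-\overline{u}'_{0})$ to yield the source $-\Delta(\boldsymbol{V}\cdot\nabla u_{0})+\boldsymbol{V}\cdot\nabla u_{0}$ of~\eqref{caractu0DNT}, together with a new boundary contribution $\int_{\Gamma_{0}}(\boldsymbol{W}\cdot\boldsymbol{\nn})(v-\overline{u}'_{0})$.

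The third step is a purely algebraic simplification of all boundary integrands. Using the pointwise identity $-\Delta u_{0}+u_{0}=f$ on $\Gamma_{0}$, the term $\boldsymbol{V}\cdot\boldsymbol{\nn}(f-u_{0})$ of~\eqref{matderivgeneral} reduces to $-(\Delta u_{0})\boldsymbol{V}\cdot\boldsymbol{\nn}$, which cancels exactly the corresponding summand of $\boldsymbol{W}\cdot\boldsymbol{\nn}$. The identity $\mathrm{div}_{\Gamma_{0}}(\boldsymbol{V})=\mathrm{div}(\boldsymbol{V})-\nabla\boldsymbol{V}\boldsymbol{\nn}\cdot\boldsymbol{\nn}$ recalled in Proposition~\ref{intbord} then converts the combination $-\mathrm{div}(\boldsymbol{V})\partial_{\nn}u_{0}+\mathrm{div}_{\Gamma_{0}}(\boldsymbol{V})\partial_{\nn}u_{0}$ into $-(\nabla\boldsymbol{V}\boldsymbol{\nn}\cdot\boldsymbol{\nn})\partial_{\nn}u_{0}$, so that the total boundary integrand collapses exactly to $(\nabla\boldsymbol{V}+\nabla\boldsymbol{V}^{\top})\nabla u_{0}\cdot\boldsymbol{\nn}+(\tfrac{\nabla g}{g}\cdot\boldsymbol{V}-\nabla\boldsymbol{V}\boldsymbol{\nn}\cdot\boldsymbol{\nn})\partial_{\nn}u_{0}=h^{m}(\boldsymbol{V})$.

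After these manipulations, the variational inequality~\eqref{matderivgeneral} reads as the weak formulation (Definition~\ref{WeaksolutionSigno}) of the scalar Signorini problem~\eqref{caractu0DNT} with $k=-\Delta(\boldsymbol{V}\cdot\nabla u_{0})+\boldsymbol{V}\cdot\nabla u_{0}$, $\ell=h^{m}(\boldsymbol{V})$ and $w=0$, and Proposition~\ref{existenceunicitepbDNS} closes the argument by identifying $\overline{u}'_{0}$ with the unique such weak solution. The main obstacle I anticipate is the careful bookkeeping of the boundary contributions in the third step, where terms coming from $\boldsymbol{W}\cdot\boldsymbol{\nn}$, from the original surface term of~\eqref{matderivgeneral}, from the PDE satisfied by $u_{0}$ and from the tangential divergence of $\boldsymbol{V}$ must all conspire to produce the compact expression $h^{m}(\boldsymbol{V})$; a secondary but non-trivial point is to verify that the regularity hypotheses $u_{0}\in\HH^{3}(\Omega_{0})$ and $\boldsymbol{V}\in\mathcal{C}^{2,\infty}(\R^{d},\R^{d})$ are precisely strong enough to ensure $\boldsymbol{W}\in\HH_{\mathrm{div}}(\Omega_{0})$ and thus to license the use of Proposition~\ref{divergenceformula}.
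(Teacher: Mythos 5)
Your proposal is correct and follows essentially the same route as the paper: integrate the gradient term of~\eqref{matderivgeneral} by parts via the divergence formula, identify the resulting bulk divergence with $\Delta(\boldsymbol{V}\cdot\nabla u_{0})$ through Proposition~\ref{divergencelaplacien}, simplify the boundary terms using $f-u_{0}=-\Delta u_{0}$ and $\mathrm{div}_{\Gamma_0}(\boldsymbol{V})=\mathrm{div}(\boldsymbol{V})-\nabla\boldsymbol{V}\boldsymbol{\nn}\cdot\boldsymbol{\nn}$ to obtain $h^{m}(\boldsymbol{V})$, and conclude with the weak Signorini formulation and its uniqueness. The only (harmless) difference is organizational: you apply the divergence formula once to the combined field $\boldsymbol{W}$ after checking $\mathrm{div}(\boldsymbol{W})=\Delta(\boldsymbol{V}\cdot\nabla u_{0})\in\LL^{2}(\Omega_{0})$, whereas the paper integrates by parts in two stages (first the $(-\nabla\boldsymbol{V}-\nabla\boldsymbol{V}^{\top}+\mathrm{div}(\boldsymbol{V})\mathrm{I})\nabla u_{0}$ part, then $\Delta u_{0}\boldsymbol{V}$) before invoking Proposition~\ref{divergencelaplacien}.
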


\begin{proof}
Since $u_0 \in \HH^2(\Omega_0)$ and $\boldsymbol{V} \in \mathcal{C}^{2,\infty}(\R^{d},\R^{d})$, we deduce that~$\mathrm{div} ( ( -\nabla{\boldsymbol{V}} -\nabla{\boldsymbol{V}}^{\top}+\mathrm{div}(\boldsymbol{V})\mathrm{I} )\nabla{u_{0}} ) \in\LL^2(\Omega_0)$. Using the divergence formula (see Proposition~\ref{divergenceformula}) in Inequality~\eqref{matderivgeneral}, we get that
\begin{multline*}
        \dual{\overline{u}'_0}{v-\overline{u}'_0}_{\HH^{1}(\Omega_{0})} \geq\int_{\Omega_{0}}\boldsymbol{V}\cdot\nabla{u_0}\left(v-\overline{u}'_0\right)+\int_{\Omega_0}\Delta u_0 \boldsymbol{V}\cdot\nabla{\left(v-\overline{u}'_0\right)}
    \\+\int_{\Omega_{0}}\mathrm{div}\left(\left(-\nabla{\boldsymbol{V}}-\nabla{\boldsymbol{V}}^{\top}+\mathrm{div}(\boldsymbol{V})\mathrm{I}\right)\nabla{u_{0}}\right)\left(v-\overline{u}'_0\right)\\
    +\int_{\Gamma_{0}}\left(\boldsymbol{V}\cdot \boldsymbol{\nn} \left(f-u_0\right)+\left(\nabla{\boldsymbol{V}+\nabla{\boldsymbol{V}^{\top}}}\right)\nabla{u_0}\cdot \boldsymbol{\nn}+\left(\frac{\nabla{g}}{g}\cdot\boldsymbol{V}-\nabla{\boldsymbol{V}} \boldsymbol{\nn} \cdot \boldsymbol{\nn} \right)\partial_{\nn}u_{0}\right)\left(v-\overline{u}'_0\right),
\end{multline*}
for all $v\in\mathcal{K}_{u_{0},\frac{\partial_{\nn}\left(E_{0}-u_{0}\right)}{g}}$.
Moreover, since $\Delta u =u-f\in\HH^1(\Omega_0)$, it holds that $\mathrm{div}(\Delta u_0\boldsymbol{V})\in\LL^2(\Omega_0)$. Thus, using again the divergence formula, one deduces
\begin{multline}\label{lagderivu0H2}
    \dual{\overline{u}'_0}{v-\overline{u}'_0}_{\HH^{1}(\Omega_{0})}\geq\int_{\Omega_{0}}-\mathrm{div}\left(\left(\Delta u_0\right)\boldsymbol{V}-\mathrm{div}(\boldsymbol{V})\nabla{u_0}+(\nabla{\boldsymbol{V}}+\nabla{\boldsymbol{V}}^{\top})\nabla{u_0} \right)\left(v-\overline{u}'_0\right)\\+\int_{\Omega_{0}}\boldsymbol{V}\cdot\nabla{u_{0}}\left(v-\overline{u}'_0\right)+\int_{\Gamma_{0}}h^m(\boldsymbol{V})\left(v-\overline{u}'_0\right),
\end{multline}
for all $v\in\mathcal{K}_{u_{0},\frac{\partial_{\nn}\left(E_{0}-u_{0}\right)}{g}}$. Furthermore, one has $\Delta(\boldsymbol{V}\cdot\nabla{u_{0}})\in\LL^2(\Omega_0)$ from $u_0\in\HH^3(\Omega_0)$. Thus, using Proposition~\ref{divergencelaplacien}, it follows that 
\begin{equation*}
        \dual{\overline{u}'_0}{v-\overline{u}'_0}_{\HH^{1}(\Omega_{0})}\geq\int_{\Omega_{0}}-\Delta\left(\boldsymbol{V}\cdot\nabla{u_{0}}\right)\left(v-\overline{u}'_0\right)+\int_{\Omega_{0}}\boldsymbol{V}\cdot\nabla{u_{0}}\left(v-\overline{u}'_0\right)+\int_{\Gamma_{0}}h^m(\boldsymbol{V})\left(v-\overline{u}'_0\right),
\end{equation*}
for all $v\in\mathcal{K}_{u_{0},\frac{\partial_{\nn}\left(E_{0}-u_{0}\right)}{g}}$ which concludes the proof from Subsection~\ref{SectionSignorinicasscalairesansu}.
\end{proof}

\begin{myRem}\normalfont\label{regularityBrez}
If $\Gamma_0$ is sufficiently regular, then~$u_0 \in \HH^2(\Omega_0)$, and this is the best regularity result that can be obtained. We refer to~\cite[Chapter~1, Theorem I.10 p.43]{TheseBrez} and~\cite[Chapter~1, Remark I.26 p.47]{TheseBrez} for details. It does not mean that~$u_0 \notin \HH^3(\Omega_0)$ in general. It just means that, in this reference, there is a counterexample in which~$u_0 \notin \HH^3(\Omega_0)$ even if~$\Gamma_0$ is very smooth. Note that, from the proof of Corollary~\ref{LagderivSigno}, one can get, under the weaker assumption~$u_0 \in \HH^2(\Omega_0)$, that the material directional derivative $\overline{u}'_0$ is the solution to the variational inequality~\eqref{lagderivu0H2} which is, from Subsection~\ref{SectionSignorinicasscalairesansu}, the weak formulation of a Signorini problem with the source term given by~$-\mathrm{div} (\left(\Delta u_0\right)\boldsymbol{V}-\mathrm{div}(\boldsymbol{V})\nabla{u_0}+(\nabla{\boldsymbol{V}}+\nabla{\boldsymbol{V}}^{\top})\nabla{u_0} )+\boldsymbol{V}\cdot\nabla{u_{0}}\in\LL^2(\Omega_0)$.
\end{myRem}

\begin{comment}
\begin{myRem}\normalfont\label{remarkregul}
If $u_{0}$ is only in $\HH^{1}(\Omega_0)$, then we could define "formally" the variational formulation~\eqref{signorini2casfaible} as the weak formulation of the scalar Signorini problem~\eqref{caractu0DNT}.
Nevertheless, if~$u_{0}\in\HH^{3}(\Omega_0)$, then the normal derivative of $u_{0}$ can be extended into a function defined in $\Omega_{0}$ such that~$\partial_{\nn}u_{0}\in\HH^{2}(\Omega_{0})$ which will be useful in Subsection~\ref{energyfunctional} to get a suitable
expression of the shape gradient of $\mathcal{J}$. 

It is important to note that, to the best of our knowledge, there is no regularity result for the solution to the scalar Tresca friction problem with respect to the data. Nevertheless, from the assumptions that $\Omega_{0}$ has $ \mathcal{C}^{1}\text{-boundary}$, $f\in\HH^{1}(\R^{d})$ and $g\in\HH^{2}(\R^{d})$, it is reasonable to think that $u_{0}\in\HH^{3}(\Omega_{0})$ since the solution to the Neumann problem on $\Omega_{0}$ with the right-hand source term $f$ and the boundary condition $g$ is in $\HH^{3}(\Omega_{0})$ (see, e.g.,~\cite[Chapter 4 Theorem 4. p.217]{MIKHA}). Obtaining this regularity result in our case is a highly nontrivial work and is not the main focus of this paper. However, we can mention the works~\cite{SAITO,SAITOFUJI} which deal with regularity results for variational inequalities concerning the Stokes equations.
\end{myRem}
\end{comment}

Thanks to Corollary~\ref{LagderivSigno}, we are now in a position to characterize the shape directional derivative.

\begin{myCor}[Shape directional derivative]\label{shapederivaticesigno}
Consider the framework of Corollary~\ref{LagderivSigno} with the additional assumption that $\Gamma_0$ is of class $\mathcal{C}^3$. Then the shape directional derivative, defined by~$u'_{0}:=\overline{u}'_0-\nabla{u_{0}}\cdot\boldsymbol{V}\in\HH^{1}(\Omega_{0})$, is the unique weak solution to the scalar Signorini problem given by
\begin{equation*}\label{Eulesigno}
\arraycolsep=2pt
\left\{
\begin{array}{rcll}
-\Delta u_{0}'+u_{0}' & = & 0  & \text{ in } \Omega_{0} , \\
u_{0}' & = & -\boldsymbol{V}\cdot\nabla{u_{0}} & \text{ on } \Gamma^{u_{0},g}_{\mathrm{D}}, \\
\partial_{\nn} u_{0}' & = & h^s(\boldsymbol{V})  & \text{ on } \Gamma^{u_{0},g}_{\mathrm{N}}, \\
 u_{0}'\leq-\boldsymbol{V}\cdot\nabla{u_{0}}\text{, } \partial_{\nn}u_{0}'\leq h^s(\boldsymbol{V}) \text{ and } \left(u_{0}'+\boldsymbol{V}\cdot\nabla{u_{0}}\right)\left(\partial_{\nn}u_{0}'-h^s(\boldsymbol{V})\right) & = & 0  & \text{ on } \Gamma^{u_{0},g}_{\mathrm{S-}}, \\
 u_{0}'\geq-\boldsymbol{V}\cdot\nabla{u_{0}}\text{, } \partial_{\nn}u_{0}'\geq h^s(\boldsymbol{V}) \text{ and } \left(u_{0}'+\boldsymbol{V}\cdot\nabla{u_{0}}\right)\left(\partial_{\nn}u_{0}'- h^s(\boldsymbol{V})\right) & = & 0  & \text{ on } \Gamma^{u_{0},g}_{\mathrm{S+}},
\end{array}
\right.
\end{equation*}
where $h^s(\boldsymbol{V}):=\boldsymbol{V}\cdot \boldsymbol{\nn} (\partial_{\nn} (\partial_{\nn}u_0 )-\frac{\partial^2 u_0}{\partial \mathrm{n}^2} )+\nabla_{\Gamma_0} u_{0} \cdot\nabla_{\Gamma_0}(\boldsymbol{V}\cdot\boldsymbol{\mathrm{\nn}})-g\nabla{(\frac{\partial_{\nn}u_{0}}{g})}\cdot\boldsymbol{V}\in\LL^{2}(\Gamma_{0})$.
\end{myCor}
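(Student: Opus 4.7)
The plan is to directly substitute $u_0' = \overline{u}_0' - \boldsymbol{V}\cdot\nabla u_0$ into each condition of the Signorini problem~\eqref{caractu0DNT} satisfied by $\overline{u}_0'$ in Corollary~\ref{LagderivSigno} and check that we recover the claimed system for $u_0'$. Most of the verification is algebraic; the only nontrivial part is matching the two normal-derivative terms $h^m(\boldsymbol{V})$ and $h^s(\boldsymbol{V})$. Once the boundary value problem is established, uniqueness of the weak solution follows from the existence/uniqueness result of Subsection~\ref{SectionSignorinicasscalairesansu} (Proposition~\ref{existenceunicitepbDNS}) applied to the consistent decomposition $\Gamma_0 = \Gamma^{u_0,g}_{\mathrm{N}}\cup\Gamma^{u_0,g}_{\mathrm{D}}\cup\Gamma^{u_0,g}_{\mathrm{S-}}\cup\Gamma^{u_0,g}_{\mathrm{S+}}$.

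For the interior, I would simply write $-\Delta u_0' + u_0' = -\Delta\overline{u}_0' + \overline{u}_0' + \Delta(\boldsymbol{V}\cdot\nabla u_0) - \boldsymbol{V}\cdot\nabla u_0$, which vanishes by the source term given in~\eqref{caractu0DNT}. For the boundary traces, recall that on $\Gamma^{u_0,g}_{\mathrm{D}} \cup \Gamma^{u_0,g}_{\mathrm{S}\pm}$ one has $u_0 = 0$, so that $u_0' = \overline{u}_0' - \boldsymbol{V}\cdot\nabla u_0 = \overline{u}_0' - (\boldsymbol{V}\cdot\boldsymbol{\mathrm{n}})\partial_{\nn}u_0$ there (since $\nabla_{\Gamma_0} u_0 = 0$ on $\{u_0=0\}$). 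On $\Gamma^{u_0,g}_{\mathrm{D}}$, $\overline{u}_0'=0$ gives $u_0' = -\boldsymbol{V}\cdot\nabla u_0$; on $\Gamma^{u_0,g}_{\mathrm{S}\pm}$, the sign conditions on $\overline{u}_0'$ and the complementarity relations transform termwise into those for $u_0' + \boldsymbol{V}\cdot\nabla u_0$. For all normal-derivative conditions, I use $\partial_{\nn}u_0' = \partial_{\nn}\overline{u}_0' - \partial_{\nn}(\boldsymbol{V}\cdot\nabla u_0)$.

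The main obstacle is therefore to establish the pointwise identity
\begin{equation*}
h^m(\boldsymbol{V}) - \partial_{\nn}(\boldsymbol{V}\cdot\nabla u_0) = h^s(\boldsymbol{V}) \quad \text{a.e.\ on } \Gamma_0.
\end{equation*}
My plan for this computation is as follows. First, using the product rule, write $\nabla(\boldsymbol{V}\cdot\nabla u_0) = \nabla\boldsymbol{V}^\top \nabla u_0 + \mathrm{D}^2 u_0 \boldsymbol{V}$, so that $\partial_{\nn}(\boldsymbol{V}\cdot\nabla u_0) = \nabla\boldsymbol{V}^\top\nabla u_0 \cdot\boldsymbol{\mathrm{n}} + \mathrm{D}^2 u_0\boldsymbol{V}\cdot\boldsymbol{\mathrm{n}}$. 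Combined with the $(\nabla\boldsymbol{V}+\nabla\boldsymbol{V}^\top)\nabla u_0\cdot\boldsymbol{\mathrm{n}}$ contribution from $h^m(\boldsymbol{V})$, this leaves $\nabla\boldsymbol{V}\nabla u_0\cdot\boldsymbol{\mathrm{n}} - \mathrm{D}^2 u_0 \boldsymbol{V}\cdot\boldsymbol{\mathrm{n}}$. The term involving $\frac{\nabla g}{g}\cdot\boldsymbol{V}$ in $h^m$ is absorbed into $-g\nabla(\tfrac{\partial_{\nn}u_0}{g})\cdot\boldsymbol{V}$ in $h^s$ via the logarithmic derivative identity $g\nabla(\tfrac{\partial_{\nn}u_0}{g}) = \nabla(\partial_{\nn}u_0) - \tfrac{\partial_{\nn}u_0}{g}\nabla g$. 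It remains to check that the geometric remainder matches $\boldsymbol{V}\cdot\boldsymbol{\mathrm{n}}(\partial_{\nn}(\partial_{\nn}u_0) - \tfrac{\partial^2 u_0}{\partial\mathrm{n}^2}) + \nabla_{\Gamma_0}u_0\cdot\nabla_{\Gamma_0}(\boldsymbol{V}\cdot\boldsymbol{\mathrm{n}}) - \nabla\boldsymbol{V}\boldsymbol{\mathrm{n}}\cdot\boldsymbol{\mathrm{n}}\,\partial_{\nn}u_0$. For this I decompose $\boldsymbol{V} = (\boldsymbol{V}\cdot\boldsymbol{\mathrm{n}})\boldsymbol{\mathrm{n}} + \boldsymbol{V}_\tau$ and $\nabla u_0 = \nabla_{\Gamma_0}u_0 + (\partial_{\nn}u_0)\boldsymbol{\mathrm{n}}$, expand each bilinear expression in these components, and identify the normal/tangential pieces; the assumption $u_0 \in \HH^3(\Omega_0)$ and $\Gamma_0 \in \mathcal{C}^3$ ensures every term has meaning in $\LL^2(\Gamma_0)$. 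I expect the bookkeeping—in particular handling the distinction between $\partial_{\nn}(\partial_{\nn}u_0)$ (which depends on the choice of extension of $\boldsymbol{\mathrm{n}}$ off $\Gamma_0$) and the purely intrinsic $\tfrac{\partial^2 u_0}{\partial\mathrm{n}^2} = \mathrm{D}^2 u_0\boldsymbol{\mathrm{n}}\cdot\boldsymbol{\mathrm{n}}$—to be the delicate point, and the correction $\boldsymbol{V}\cdot\boldsymbol{\mathrm{n}}(\partial_{\nn}(\partial_{\nn}u_0) - \tfrac{\partial^2 u_0}{\partial\mathrm{n}^2})$ appearing in $h^s(\boldsymbol{V})$ is precisely what absorbs this extension-dependent discrepancy.
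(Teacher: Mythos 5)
Your plan has the right algebraic heart: the whole corollary does reduce to translating the problem for $\overline{u}'_0$ by $\boldsymbol{V}\cdot\nabla u_0$ and to the identity $h^m(\boldsymbol{V})-\partial_{\nn}(\boldsymbol{V}\cdot\nabla u_0)=h^s(\boldsymbol{V})$ on $\Gamma_0$, which is indeed true (and, as you anticipate, independent of the extension of $\boldsymbol{\nn}$ and of $\partial_{\nn}u_0$). But there are two genuine gaps. First, a structural one: Corollary~\ref{LagderivSigno} only provides $\overline{u}'_0$ as the unique \emph{weak} solution to~\eqref{caractu0DNT}, i.e.\ as the solution of a variational inequality over the convex set $\mathcal{K}_{u_{0},\partial_{\nn}(E_0-u_0)/g}$. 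The pointwise boundary relations you propose to substitute into --- $\partial_{\nn}\overline{u}'_0=h^m(\boldsymbol{V})$ on $\Gamma^{u_0,g}_{\mathrm{N}}$ and the inequalities/complementarity on $\Gamma^{u_0,g}_{\mathrm{S\pm}}$ --- belong to the \emph{strong} formulation, and passing from weak to strong requires $\partial_{\nn}\overline{u}'_0\in\LL^{2}(\Gamma_0)$ together with consistency of the decomposition (Proposition~\ref{EquiSignoCassansu}), neither of which is assumed or proved here; a priori one only gets $\nabla\overline{u}'_0\in\HH_{\mathrm{div}}(\Omega_0)$, hence a normal trace in $\HH^{-1/2}(\Gamma_0)$. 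The membership conditions ($\overline{u}'_0=0$ on $\Gamma^{u_0,g}_{\mathrm{D}}$, the sign conditions on $\Gamma^{u_0,g}_{\mathrm{S\pm}}$) are fine, since they are encoded in the convex set, but the flux conditions are not available termwise. The paper avoids this entirely by staying at the variational level: it translates the test functions by $\boldsymbol{V}\cdot\nabla u_0$, uses the divergence formula (Proposition~\ref{divergenceformula}) on the volume terms to produce the boundary term $\int_{\Gamma_0}\partial_{\nn}(\boldsymbol{V}\cdot\nabla u_0)(\cdot)$, and then identifies the resulting boundary linear form with $\int_{\Gamma_0}h^s(\boldsymbol{V})(\cdot)$, concluding via the weak-solution framework of Subsection~\ref{SectionSignorinicasscalairesansu}. (Incidentally, uniqueness of the weak solution comes from Proposition~\ref{existenceunicitepbDNS} alone; consistency in the sense of Definition~\ref{regulieresens2} is neither needed for it nor established, so do not invoke it.)

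Second, the central identity itself is only sketched in your proposal, and it is exactly where the work lies. The paper proves it in integrated form against arbitrary test functions $v$, combining the trace of the equation ($f=-\Delta u_0+u_0$ on $\Gamma_0$), the surface integration-by-parts formula with mean curvature (Proposition~\ref{intbord}, applied to $v\,\partial_{\nn}u_0\in\mathrm{W}^{2,1}(\Omega_0)$), Proposition~\ref{divergencelaplacien}, and the Laplace--Beltrami decomposition together with its integration-by-parts formula (Proposition~\ref{beltrami}); this is precisely how the term $\nabla_{\Gamma_0}u_0\cdot\nabla_{\Gamma_0}(\boldsymbol{V}\cdot\boldsymbol{\nn})$ is produced. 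If you insist on a purely local computation with the decompositions $\boldsymbol{V}=(\boldsymbol{V}\cdot\boldsymbol{\nn})\boldsymbol{\nn}+\boldsymbol{V}_\tau$ and $\nabla u_0=\nabla_{\Gamma_0}u_0+(\partial_{\nn}u_0)\boldsymbol{\nn}$, it can be made to work, but after the cancellations you describe (Hessian symmetry, logarithmic-derivative rearrangement, cancellation of the extension-dependent pieces) you are left with matching $\nabla_{\Gamma_0}u_0\cdot\nabla_{\Gamma_0}(\boldsymbol{V}\cdot\boldsymbol{\nn})$ against terms of the form $\nabla\tilde{\boldsymbol{\nn}}\,\boldsymbol{V}_\tau\cdot\nabla_{\Gamma_0}u_0$, and closing that requires the symmetry of the second fundamental form (Weingarten map) of $\Gamma_0$ --- an ingredient your outline never names, and which in the paper is hidden inside the tangential integration by parts. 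As written, "I expect the bookkeeping to be the delicate point" leaves the decisive step unproved.
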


\begin{proof}
From the weak variational formulation of $\overline{u}'_0$ given in Corollary~\ref{LagderivSigno} and using the divergence formula (see Proposition~\ref{divergenceformula}), one can easily obtain that
$$
        \dual{u_{0}'}{v-\boldsymbol{V}\cdot\nabla{u_{0}}-u_{0}'}_{\HH^{1}(\Omega_{0})}\geq\int_{\Gamma_{0}}\left(h^m(\boldsymbol{V})-\nabla{(\boldsymbol{V}\cdot  \nabla u_{0} } ) \cdot\boldsymbol{\nn}\right)\left(v-\boldsymbol{V}\cdot\nabla{u_{0}}-u_{0}'\right),
$$
for all $v\in\mathcal{K}_{u_{0},\frac{\partial_{\nn}\left(E_{0}-u_{0}\right)}{g}}$ (see notation introduced in Theorem~\ref{Lagderiv}), which can be rewritten as
$$ 
\dual{u_{0}'}{w-u_{0}'}_{\HH^{1}(\Omega_{0})}\geq\int_{\Gamma_{0}}\left(h^m(\boldsymbol{V})-\nabla{(\boldsymbol{V}\cdot\nabla{u_{0}})}\cdot\boldsymbol{\nn}\right)\left(w-u_{0}'\right),
$$
for all $w\in\mathcal{K}_{u_{0},\frac{\partial_{\nn}\left(E_{0}-u_{0}\right)}{g}}-\boldsymbol{V}\cdot\nabla{u_{0}}$. Since $\Gamma_0$ is of class $\mathcal{C}^{3}$ and $u_0\in\HH^3(\Omega_0)$, the normal derivative of~$u_{0}$ can be extended into a function defined in $\Omega_{0}$ such that~$\partial_{\nn}u_{0}\in\HH^{2}(\Omega_{0})$. Thus, it holds that~$v\partial_{\nn}u_{0}\in\mathrm{W}^{2,1}(\Omega_{0})$ for all~$v\in\mathcal{C}^{\infty}(\overline{\Omega_{0}})$, and one can use Propositions~\ref{divergencelaplacien} and~\ref{intbord} to obtain that 
\begin{multline*}
\int_{\Gamma_{0}}\left(h^m(\boldsymbol{V})-\nabla{(\boldsymbol{V}\cdot\nabla{u_{0}})}\cdot\boldsymbol{\nn}\right)v\\=\int_{\Gamma_{0}}\boldsymbol{V}\cdot\boldsymbol{\nn}\left(-\nabla{u_{0}}\cdot\nabla{v}-u_{0}v+fv+Hv\partial_{\nn}u_{0}+\partial_{\nn}\left(v\partial_{\nn}u_{0}\right)\right)
-\int_{\Gamma_{0}}gv\nabla{\left(\frac{\partial_{\nn}u_{0}}{g}\right)}\cdot\boldsymbol{V},
\end{multline*}
for all $v\in\mathcal{C}^{\infty}(\overline{\Omega_{0}})$.
Then, by using Proposition~\ref{beltrami}, one deduces that
\begin{multline*}
    \int_{\Gamma_{0}}\left(h^m(\boldsymbol{V})-\nabla{(\boldsymbol{V}\cdot\nabla{u_{0}})}\cdot\boldsymbol{\nn}\right)v\\=\int_{\Gamma_{0}}\left(\boldsymbol{V}\cdot \boldsymbol{\nn} \left(\partial_{\nn}\left(\partial_{\nn}u_0\right)-\frac{\partial^2 u_0}{\partial \mathrm{n}^2}\right)+\nabla_{\Gamma_{0}}{u_{0}}\cdot\nabla_{\Gamma_{0}}{\left(\boldsymbol{V}\cdot\boldsymbol{\nn}\right)}-g\nabla{\left(\frac{\partial_{\nn}u_{0}}{g}\right)}\cdot\boldsymbol{V}\right)v,
\end{multline*}
for all $v\in\mathcal{C}^{\infty}(\overline{\Omega_{0}})$, and also for all $v\in\HH^{1}(\Omega_{0})$ by density. Thus it follows that
\begin{multline*}
    \dual{u_{0}'}{w-u_{0}'}_{\HH^{1}(\Omega_{0})}\\\geq\int_{\Gamma_{0}}\left(\boldsymbol{V}\cdot\boldsymbol{\nn}\left(\partial_{\nn}\left(\partial_{\nn}u_0\right)-\frac{\partial^2 u_0}{\partial \mathrm{n}^2}\right)+\nabla_{\Gamma_{0}}{u_{0}}\cdot\nabla_{\Gamma_{0}}{\left(\boldsymbol{V}\cdot\boldsymbol{\nn}\right)}-g\nabla{\left(\frac{\partial_{\nn}u_{0}}{g}\right)}\cdot\boldsymbol{V}\right)\left(w-u_{0}'\right),
\end{multline*}
for all $w\in\mathcal{K}_{u_{0},\frac{\partial_{\nn}\left(E_{0}-u_{0}\right)}{g}}-\boldsymbol{V}\cdot\nabla{u_{0}}$,
which concludes the proof from Subsection~\ref{SectionSignorinicasscalairesansu}.
\end{proof}

\subsection{Shape gradient of the Tresca energy functional}\label{energyfunctional}
Thanks to the characterization of the material directional derivative obtained in Theorem~\ref{Lagderiv}, we are now in a position to prove the main result of the present paper.

\begin{myTheorem}\label{shapederivofJ}
Consider the framework of Theorem~\ref{Lagderiv}. Then the Tresca energy functional~$\mathcal{J}$ admits a shape gradient at $\Omega_{0}$ in any direction~$\boldsymbol{V} \in\mathcal{C}^{1,\infty}(\R^{d},\R^{d})$ given by
\begin{multline}\label{gradientdeforme1}
    \mathcal{J}'(\Omega_{0})(\boldsymbol{V})=\frac{1}{2}\int_{\Omega_0}\mathrm{div}(\boldsymbol{V})\left\|\nabla{u_0}\right\|^2-\int_{\Omega_0}\nabla{u_0}\cdot\left(\nabla{\boldsymbol{V}}\nabla{u_0}+\Delta u_0\boldsymbol{V}\right)\\
    +\int_{\Gamma_0}\left(\boldsymbol{V}\cdot\boldsymbol{\nn}\left(\frac{\left|u_0\right|^2}{2}-fu_0\right)-\left(\frac{\nabla{g}}{g}\cdot\boldsymbol{V}+\mathrm{div}_{\Gamma_0}(\boldsymbol{V})\right)u_0\partial_{\nn}u_{0}\right).
\end{multline}
\end{myTheorem}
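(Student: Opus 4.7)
The plan is to start from the alternative expression $\mathcal{J}(\Omega)=-\frac{1}{2}\int_{\Omega}(\|\nabla u_\Omega\|^{2}+u_\Omega^{2})$ recalled at the beginning of Section~\ref{mainresult}. After the change of variables $\boldsymbol{\mathrm{id}}+t\boldsymbol{V}$, this becomes $\mathcal{J}(\Omega_t)=-\frac{1}{2}\int_{\Omega_0}(\mathrm{A}_t\nabla\overline{u}_t\cdot\nabla\overline{u}_t+\overline{u}_t^{2}\mathrm{J}_t)$, which is differentiable at $t=0$ thanks to the known derivatives of $\mathrm{A}_t$ and $\mathrm{J}_t$ recalled in Subsection~\ref{PerturbTrescatyoefric} and to Theorem~\ref{Lagderiv}, which supplies the material directional derivative $\overline{u}'_0\in\HH^1(\Omega_0)$. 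First I would differentiate at $t=0$ to obtain
\[
\mathcal{J}'(\Omega_0)(\boldsymbol{V})=-\frac{1}{2}\int_{\Omega_0}\mathrm{A}'_0\nabla u_0\cdot\nabla u_0-\dual{u_0}{\overline{u}'_0}_{\HH^1(\Omega_0)}-\frac{1}{2}\int_{\Omega_0}u_0^{2}\mathrm{div}(\boldsymbol{V}),
\]
so the proof reduces to computing the scalar product $\dual{u_0}{\overline{u}'_0}_{\HH^1(\Omega_0)}$ without any explicit knowledge of $\overline{u}'_0$.

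The key step would be to test the variational inequality~\eqref{matderivgeneral} for $\overline{u}'_0$ against $v=\overline{u}'_0\pm u_0$. The point is that both $u_0$ and $-u_0$ belong to $\mathcal{K}_{u_0,\frac{\partial_{\nn}(E_0-u_0)}{g}}$: indeed, by the very definition of the decomposition $\Gamma_0=\Gamma^{u_0,g}_{\mathrm{N}}\cup\Gamma^{u_0,g}_{\mathrm{D}}\cup\Gamma^{u_0,g}_{\mathrm{S-}}\cup\Gamma^{u_0,g}_{\mathrm{S+}}$ in Theorem~\ref{Lagderiv}, one has $u_0=0$ \textit{a.e.} on $\Gamma^{u_0,g}_{\mathrm{D}}\cup\Gamma^{u_0,g}_{\mathrm{S-}}\cup\Gamma^{u_0,g}_{\mathrm{S+}}$, so both signs trivially satisfy the sign constraints defining the cone. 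Since $\mathcal{K}_{u_0,\frac{\partial_{\nn}(E_0-u_0)}{g}}$ is a convex cone containing $\overline{u}'_0$, the sums $\overline{u}'_0\pm u_0$ are admissible, and the two resulting inequalities respectively provide an upper and a lower bound on $\dual{\overline{u}'_0}{u_0}_{\HH^1(\Omega_0)}$; they therefore combine into the equality
\[
\dual{\overline{u}'_0}{u_0}_{\HH^1(\Omega_0)}=\int_{\Omega_0}\boldsymbol{V}\cdot\nabla u_0\,u_0-\int_{\Omega_0}(\mathrm{A}'_0\nabla u_0-\Delta u_0\,\boldsymbol{V})\cdot\nabla u_0+\int_{\Gamma_0}\left(\boldsymbol{V}\cdot\boldsymbol{\nn}(f-u_0)+\left(\frac{\nabla g}{g}\cdot\boldsymbol{V}+\mathrm{div}_{\Gamma_0}(\boldsymbol{V})\right)\partial_{\nn}u_0\right)u_0.
\]

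Substituting this identity into the preceding expression of $\mathcal{J}'(\Omega_0)(\boldsymbol{V})$ leaves only an algebraic simplification. Expanding $\mathrm{A}'_0=-\nabla\boldsymbol{V}-\nabla\boldsymbol{V}^{\top}+\mathrm{div}(\boldsymbol{V})\mathrm{I}$ and using the symmetry identity $(\nabla\boldsymbol{V}+\nabla\boldsymbol{V}^{\top})\nabla u_0\cdot\nabla u_0=2\nabla\boldsymbol{V}\nabla u_0\cdot\nabla u_0$, the quadratic part collapses to $\frac{1}{2}\int_{\Omega_0}\mathrm{div}(\boldsymbol{V})\|\nabla u_0\|^2-\int_{\Omega_0}\nabla u_0\cdot(\nabla\boldsymbol{V}\nabla u_0+\Delta u_0\,\boldsymbol{V})$. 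The remaining term $-\int_{\Omega_0}\boldsymbol{V}\cdot\nabla u_0\,u_0-\frac{1}{2}\int_{\Omega_0}u_0^{2}\mathrm{div}(\boldsymbol{V})$ is rewritten as $-\frac{1}{2}\int_{\Omega_0}\boldsymbol{V}\cdot\nabla u_0^{2}-\frac{1}{2}\int_{\Omega_0}u_0^{2}\mathrm{div}(\boldsymbol{V})$ and reduced via Proposition~\ref{divergenceformula} to the boundary contribution $-\frac{1}{2}\int_{\Gamma_0}u_0^{2}\boldsymbol{V}\cdot\boldsymbol{\nn}$, which recombines with $-\int_{\Gamma_0}\boldsymbol{V}\cdot\boldsymbol{\nn}(f-u_0)u_0$ to produce exactly the announced boundary integrand $\boldsymbol{V}\cdot\boldsymbol{\nn}(u_0^{2}/2-fu_0)$. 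Collecting all terms yields formula~\eqref{gradientdeforme1}.

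The hard part is really the admissibility argument of the second step: the two-sided choice $v=\overline{u}'_0\pm u_0$ crucially relies on the fact that $u_0$ vanishes on the non-Neumann part of $\Gamma_0$, itself a consequence of the Tresca contact condition $u_0\partial_{\nn}u_0+g|u_0|=0$. This is precisely what forces $\overline{u}'_0$ to cancel out of $\mathcal{J}'(\Omega_0)(\boldsymbol{V})$ and explains the remarkable feature highlighted in the Introduction, namely that the shape gradient depends only on $u_0$. Once this point is secured, the remaining computation is a routine (though slightly tedious) manipulation of divergence formulas.
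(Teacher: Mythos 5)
Your proposal is correct and follows essentially the same route as the paper: transport $\mathcal{J}(\Omega_t)=-\frac{1}{2}\int_{\Omega_0}(\mathrm{A}_t\nabla\overline{u}_t\cdot\nabla\overline{u}_t+\overline{u}_t^2\mathrm{J}_t)$, differentiate at $t=0$ using Theorem~\ref{Lagderiv}, and eliminate $\dual{\overline{u}'_0}{u_0}_{\HH^1(\Omega_0)}$ by testing the variational inequality~\eqref{matderivgeneral} with $v=\overline{u}'_0\pm u_0$, which is admissible precisely because $u_0$ vanishes on $\Gamma^{u_0,g}_{\mathrm{D}}\cup\Gamma^{u_0,g}_{\mathrm{S-}}\cup\Gamma^{u_0,g}_{\mathrm{S+}}$ — this is exactly the paper's argument ($\overline{u}'_0\pm u_0\in\mathcal{K}_{u_0,\frac{\partial_{\nn}(E_0-u_0)}{g}}$), and your concluding use of the divergence formula matches the paper's final step.
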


\begin{proof}
By following the usual strategy developed in the shape optimization literature (see, e.g.,~\cite{ALL,HENROT}) to compute the shape gradient of $\mathcal{J}$ at $\Omega_{0}$ in a direction $\boldsymbol{V}\in\mathcal{C}^{1,\infty}(\R^{d},\R^{d})$, one gets
\begin{equation*}
\mathcal{J}'(\Omega_{0})(\boldsymbol{V})=-\frac{1}{2}\int_{\Omega_{0}}\left(\left\|\nabla{u_{0}}\right\|^{2}+\left|u_{0}\right|^{2}\right)\mathrm{div}(\boldsymbol{V})+\int_{\Omega_{0}}\nabla{u_{0}}\cdot\nabla{\boldsymbol{V}}\nabla{u_{0}}-\dual{\overline{u}'_0}{u_{0}}_{\HH^{1}(\Omega_{0})}.
\end{equation*}
On the other hand, since $\overline{u}'_0\pm u_{0}\in\mathcal{K}_{u_{0},\frac{\partial_{\nn}\left(E_{0}-u_{0}\right)}{g}}$ (see notation introduced in Theorem~\ref{Lagderiv}), we deduce from the weak variational formulation of $\overline{u}'_0$ that
\begin{multline*}
    \dual{\overline{u}'_0}{u_0}_{\HH^{1}(\Omega_{0})} =\int_{\Omega_{0}}u_0\boldsymbol{V}\cdot\nabla{u_0}
    \\-\int_{\Omega_{0}}\left(\left(-\nabla{\boldsymbol{V}}-\nabla{\boldsymbol{V}}^{\top}+\mathrm{div}(\boldsymbol{V})\mathrm{I}\right)\nabla{u_{0}}-\Delta u_0 \boldsymbol{V}\right)\cdot\nabla{u_0}\\
    +\int_{\Gamma_{0}}\left(\boldsymbol{V}\cdot\boldsymbol{\nn}\left(fu_0-\left|u_0\right|^2\right)+\left(\frac{\nabla{g}}{g}\cdot\boldsymbol{V}+\mathrm{div}_{\Gamma_0}(\boldsymbol{V})\right)u_0\partial_{\nn}u_{0}\right).
\end{multline*}
The proof is complete thanks to the divergence formula (see Proposition~\ref{divergenceformula}).
\end{proof}

As we did in Corollary~\ref{LagderivSigno} for the material directional derivative, the presentation of Theorem~\ref{shapederivofJ} can be improved under additional assumptions.

\begin{comment}
\begin{myCor}\label{shapegrad2}
Consider the framework of Theorem~\ref{shapederivofJ} with the additional assumption that $u_0\in\HH^2(\Omega_0)$. Then the shape gradient of the Tresca energy functional $\mathcal{J}$ is given by

\begin{multline*}
    \mathcal{J}'(\Omega_{0})(\boldsymbol{V})=\int_{\Gamma_{0}}\boldsymbol{V}\cdot\boldsymbol{\nn}\left(\frac{\left\|\nabla{u_{0}}\right\|^{2}+\left|u_{0}\right|^{2}}{2}-fu_{0}\right)\\-\int_{\Gamma_0}\left(\partial_{\nn}u_0\boldsymbol{V}\cdot\nabla{u_0}+\left(\frac{\nabla{g}}{g}\cdot\boldsymbol{V}+\mathrm{div}_{\Gamma_0}(\boldsymbol{V})\right)u_0\partial_{\nn}u_{0}\right).
\end{multline*}
for all $\boldsymbol{V}\in~\mathcal{C}^{1,\infty}(\R^{d},\R^{d})$.
\end{myCor}

\begin{proof}
Since $u_0\in\HH^2(\Omega_0)$, one gets for all $\boldsymbol{V}\in~\mathcal{C}^{1,\infty}(\R^{d},\R^{d})$,
$$
\int_{\Omega_0}\mathrm{div}(\boldsymbol{V})\left\|\nabla{u_0}\right\|^2=-\int_{\Omega_0}\boldsymbol{V}\cdot\nabla{\left(\left\|\nabla{u_0}\right\|^2\right)}+\int_{\Gamma_0}\boldsymbol{V}\cdot\nn\left\|\nabla{u_0}\right\|^2,
$$
and
$$
\int_{\Omega_0}\Delta u_0\boldsymbol{V}\cdot\nabla{u_0}=-\int_{\Omega_0}\nabla{u_0}\cdot\nabla{\left(\boldsymbol{V}\cdot\nabla{u_0}\right)}+\int_{\Gamma_0}\partial_{\nn}u_0\boldsymbol{V}\cdot\nabla{u_0},
$$
thus one concludes from Theorem~\ref{shapederivofJ}.
\end{proof}
\end{comment}

\begin{myCor}\label{shapederivofJ3}
Consider the framework of Theorem~\ref{shapederivofJ} with the additional assumptions that $d\in\left\{1,2,3,4,5\right\}$, $\Gamma_0$ is of class $\mathcal{C}^{3}$ and $u_0\in\HH^3(\Omega_0)$. Then the shape gradient of the Tresca energy functional $\mathcal{J}$ at $\Omega_{0}$ in any direction~$\boldsymbol{V} \in \mathcal{C}^{1,\infty}(\R^{d},\R^{d})$ is given by
$$ \mathcal{J}'(\Omega_{0})(\boldsymbol{V})=\int_{\Gamma_{0}} \boldsymbol{V}\cdot\boldsymbol{\nn}\left(\frac{\left\|\nabla{u_{0}}\right\|^{2}+\left|u_{0}\right|^{2}}{2}-fu_{0}+Hg\left| u_{0} \right| -\partial_{\nn}\left(u_{0}\partial_{\nn}u_{0}\right)+gu_{0}\nabla{\left(\frac{\partial_{\nn}u_{0}}{g}\right)}\cdot\boldsymbol{\nn}\right),
$$
where $H$ is the mean curvature of $\Gamma_{0}$.
\end{myCor}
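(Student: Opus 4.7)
The proof starts from the expression~\eqref{gradientdeforme1} provided by Theorem~\ref{shapederivofJ}, and consists in rewriting the two volume integrals as boundary integrals via integration by parts, and then using the Tresca boundary condition $u_0\partial_\nn u_0=-g|u_0|$ on $\Gamma_0$ together with the tangential divergence formula (Proposition~\ref{intbord}) to eliminate every tangential contribution. The additional hypotheses are precisely what makes this manipulation rigorous: the Sobolev embedding $\HH^3(\Omega_0)\hookrightarrow\mathcal{C}^1(\overline{\Omega_0})$ holds for $d\leq 5$, and $\partial_\nn u_0$ extends to $\HH^2(\Omega_0)$ thanks to $\Gamma_0\in\mathcal{C}^3$ and $u_0\in\HH^3(\Omega_0)$, so that all the boundary manipulations below are legitimate.

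For the first step, I would use the identity $\nabla u_0\cdot\nabla\boldsymbol{V}\nabla u_0=\nabla u_0\cdot\nabla(\boldsymbol{V}\cdot\nabla u_0)-\tfrac12\boldsymbol{V}\cdot\nabla(\|\nabla u_0\|^2)$ (consequence of the chain rule and the symmetry of the Hessian of $u_0$) together with Green's formula and the divergence formula (Proposition~\ref{divergenceformula}) to rewrite the two volume terms in~\eqref{gradientdeforme1}. After the various $\Delta u_0$ and $\mathrm{div}(\boldsymbol{V})\|\nabla u_0\|^2$ contributions cancel, the volume part reduces to $\tfrac12\int_{\Gamma_0}\boldsymbol{V}\cdot\boldsymbol{\nn}\,\|\nabla u_0\|^2-\int_{\Gamma_0}\partial_\nn u_0\,\boldsymbol{V}\cdot\nabla u_0$. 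Substituting $u_0\partial_\nn u_0=-g|u_0|$ into the remaining boundary terms of~\eqref{gradientdeforme1} turns $\tfrac{\nabla g}{g}\cdot\boldsymbol{V}\,u_0\partial_\nn u_0$ and $\mathrm{div}_{\Gamma_0}(\boldsymbol{V})\,u_0\partial_\nn u_0$ into $-\nabla g\cdot\boldsymbol{V}\,|u_0|$ and $-g|u_0|\mathrm{div}_{\Gamma_0}(\boldsymbol{V})$, respectively.

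Next, I would apply Proposition~\ref{intbord} with $v:=-u_0\partial_\nn u_0$ -- which lies in $\HH^2(\Omega_0)\subset\mathrm{W}^{2,1}(\Omega_0)$ by the regularity assumptions and coincides with $g|u_0|$ on $\Gamma_0$ -- to handle $\int_{\Gamma_0}g|u_0|\mathrm{div}_{\Gamma_0}(\boldsymbol{V})$. Expanding $\nabla(u_0\partial_\nn u_0)=\partial_\nn u_0\nabla u_0+u_0\nabla(\partial_\nn u_0)$ cancels the $\int_{\Gamma_0}\partial_\nn u_0\,\boldsymbol{V}\cdot\nabla u_0$ term coming from Step 1, and decomposing $\boldsymbol{V}=(\boldsymbol{V}\cdot\boldsymbol{\nn})\boldsymbol{\nn}+\boldsymbol{V}_{T}$ isolates a tangential remainder $\int_{\Gamma_0}\boldsymbol{V}_{T}\cdot\bigl[|u_0|\nabla_{\Gamma_0}g+u_0\nabla_{\Gamma_0}(\partial_\nn u_0)\bigr]$. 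On the open set $\{u_0\neq 0\}\subset\Gamma_0$, the pointwise Tresca identity $\partial_\nn u_0=-g\,\mathrm{sign}(u_0)$ combined with a tangential differentiation yields $u_0\nabla_{\Gamma_0}(\partial_\nn u_0)=-|u_0|\nabla_{\Gamma_0}g$, so the remainder integrand vanishes there; on $\{u_0=0\}$ both terms vanish trivially. After these cancellations, the coefficient of $\boldsymbol{V}\cdot\boldsymbol{\nn}$ contains $|u_0|\partial_\nn g+u_0\partial_\nn(\partial_\nn u_0)$, which a direct calculation (using once more $u_0\partial_\nn u_0=-g|u_0|$) identifies with $gu_0\nabla(\partial_\nn u_0/g)\cdot\boldsymbol{\nn}$, yielding precisely the announced formula.

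The main obstacle is the vanishing of the tangential remainder in the last step: it crucially relies on the pointwise Tresca identity across the a~priori irregular level set $\{u_0=0\}\subset\Gamma_0$, and on the fact that $|u_0|\nabla_{\Gamma_0}g$ and $u_0\nabla_{\Gamma_0}(\partial_\nn u_0)$ both vanish there despite the non-smoothness of~$|u_0|$. The dimension restriction $d\leq 5$ and the $\HH^3$-regularity of $u_0$ are precisely what allow, through the Sobolev embedding $\HH^3(\Omega_0)\hookrightarrow\mathcal{C}^1(\overline{\Omega_0})$, to give a pointwise meaning to this identity on $\Gamma_0$ and to justify the product manipulations involving~$\partial_\nn u_0$.
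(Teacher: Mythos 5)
Your proof is correct and is essentially the paper's own argument: it passes from~\eqref{gradientdeforme1} to the boundary expression~\eqref{gradientdeforme2} by integration by parts, applies Proposition~\ref{intbord} to $v=\pm\, u_0\partial_{\nn}u_0$ (extended via $\partial_{\nn}u_0\in\HH^{2}(\Omega_0)$), uses the Tresca law $u_0\partial_{\nn}u_0=-g|u_0|$, and annihilates the tangential remainder on the relatively open subsets $\{u_0>0\}$ and $\{u_0<0\}$ of $\Gamma_0$ where $\partial_{\nn}u_0=\mp g$, exactly as in the paper's proof. One minor correction: for $d\in\{4,5\}$ the embedding is only $\HH^{3}(\Omega_0)\hookrightarrow\mathcal{C}^{0}(\overline{\Omega_0})$, not $\mathcal{C}^{1}(\overline{\Omega_0})$, but continuity of $u_0$ is all your argument actually needs, since it suffices that $\{u_0\neq 0\}$ be open in $\Gamma_0$ and that a.e.-equal Sobolev functions on an open set have a.e.-equal tangential gradients there.
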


\begin{proof}
Let~$\boldsymbol{V}\in~\mathcal{C}^{1,\infty}(\R^{d},\R^{d})$. Since $u_0\in\HH^2(\Omega_0)\subset\HH^3(\Omega_0)$, it holds that
$$
\int_{\Omega_0}\mathrm{div}(\boldsymbol{V})\left\|\nabla{u_0}\right\|^2=-\int_{\Omega_0}\boldsymbol{V}\cdot\nabla{\left(\left\|\nabla{u_0}\right\|^2\right)}+\int_{\Gamma_0}\boldsymbol{V}\cdot \boldsymbol{\nn} \left\|\nabla{u_0}\right\|^2,
$$
and
$$
\int_{\Omega_0}\Delta u_0\boldsymbol{V}\cdot\nabla{u_0}=-\int_{\Omega_0}\nabla{u_0}\cdot\nabla{\left(\boldsymbol{V}\cdot\nabla{u_0}\right)}+\int_{\Gamma_0}\partial_{\nn}u_0\boldsymbol{V}\cdot\nabla{u_0}.
$$
One deduces from~\eqref{gradientdeforme1} that
\begin{multline}\label{gradientdeforme2}
    \mathcal{J}'(\Omega_{0})(\boldsymbol{V})=\int_{\Gamma_{0}}\boldsymbol{V}\cdot\boldsymbol{\nn}\left(\frac{\left\|\nabla{u_{0}}\right\|^{2}+\left|u_{0}\right|^{2}}{2}-fu_{0}\right)\\-\int_{\Gamma_0}\left(\partial_{\nn}u_0\boldsymbol{V}\cdot\nabla{u_0}+\left(\frac{\nabla{g}}{g}\cdot\boldsymbol{V}+\mathrm{div}_{\Gamma_0}(\boldsymbol{V})\right)u_0\partial_{\nn}u_{0}\right).
\end{multline}
Moreover, since $\Gamma_0$ is of class $\mathcal{C}^{3}$ and $u_0\in\HH^3(\Omega_0)$, the normal derivative of $u_{0}$ can be extended into a function defined in $\Omega_{0}$ such that~$\partial_{\nn}u_{0}\in\HH^{2}(\Omega_{0})$. Therefore, using Proposition~\ref{intbord} with~$v=u_{0}\partial_{\nn}u_{0}\in\mathrm{W}^{2,1}(\Omega_{0})$, one gets
$$%\begin{multline*}
\mathcal{J}'(\Omega_{0})(\boldsymbol{V})=\int_{\Gamma_{0}}\boldsymbol{V}\cdot\boldsymbol{\nn}\left(\frac{\left\|\nabla{u_{0}}\right\|^{2}+\left|u_{0}\right|^{2}}{2}-fu_{0}-Hu_{0}\partial_{\nn}u_{0}-\partial_{\nn}\left(u_{0}\partial_{\nn}u_{0}\right)\right)%\\
+\int_{\Gamma_{0}}gu_{0}\nabla{\left(\frac{\partial_{\nn}u_{0}}{g}\right)}\cdot\boldsymbol{V}.
$$%\end{multline*}
From the scalar Tresca friction law, one has $Hu_{0}\partial_{\nn}u_{0}=-Hg|u_{0}|$ \textit{a.e.} on $\Gamma_{0}$.
Now let us focus on the last term. Since  $u_{0}=0$ on $\Gamma^{u_{0},g}_{\mathrm{D}}\cup\Gamma^{u_{0},g}_{\mathrm{S-}}\cup\Gamma^{u_{0},g}_{\mathrm{S+}}$, we have
$$
\int_{\Gamma_0}gu_{0}\nabla{\left(\frac{\partial_{\nn}u_{0}}{g}\right)}\cdot\boldsymbol{V}=\int_{\Gamma^{u_{0},g}_{\mathrm{N}}}gu_{0}\nabla{\left(\frac{\partial_{\nn}u_{0}}{g}\right)}\cdot\boldsymbol{V}.
$$
Let us introduce two disjoint subsets of $\Gamma_0$ given by
$$
\Gamma^{u_{0},g}_{\mathrm{N+}}:=\left\{s\in\Gamma_{0} \mid u_{0}(s)>0\right\}
\qquad \mbox{ and } \qquad 
\Gamma^{u_{0},g}_{\mathrm{N}-}:=\left\{s\in\Gamma_{0} \mid  u_{0}(s)<0\right\}.
$$
Hence it follows that $\Gamma^{u_{0},g}_{\mathrm{N}}=\Gamma^{u_{0},g}_{\mathrm{N+}}\cup\Gamma^{u_{0},g}_{\mathrm{N-}}$, with $\partial_{\nn}u_{0}=-g$ \textit{a.e.} on $\Gamma^{u_{0},g}_{\mathrm{N+}}$, and~$\partial_{\nn}u_{0}=g$ \textit{a.e.} on~$\Gamma^{u_{0},g}_{\mathrm{N-}}$. Moreover, since $u_{0}\in\HH^{3}(\Omega)$ and $d\in\left\{1,2,3,4,5\right\}$, we get from Sobolev embeddings (see, e.g.,~\cite[Chapter 4, p.79]{ADAMS}) that $u_{0}$ is continuous over $\Gamma_{0}$, thus $\Gamma^{u_{0},g}_{\mathrm{N+}}$ and $\Gamma^{u_{0},g}_{\mathrm{N-}}$ are open subsets of~$\Gamma_{0}$. Hence $\nabla_{\Gamma_0}(\frac{\partial_{\nn}u_0}{g})=0$ \textit{a.e.} on $\Gamma^{u_{0},g}_{\mathrm{N+}}\cup\Gamma^{u_{0},g}_{\mathrm{N-}}$, and one deduces that 
$$
\int_{\Gamma^{u_{0},g}_{\mathrm{N}}}gu_{0}\nabla{\left(\frac{\partial_{\nn}u_{0}}{g}\right)}\cdot\boldsymbol{V}=\int_{\Gamma^{u_{0},g}_{\mathrm{N}}}\boldsymbol{V}\cdot\boldsymbol{\nn}\left(gu_{0}\nabla{\left(\frac{\partial_{\nn}u_{0}}{g}\right)}\cdot\boldsymbol{\nn}\right),
$$
which concludes the proof.
\end{proof}

\begin{myRem}\normalfont
Under the weaker condition~$u_0\in\HH^2(\Omega_0)$ (satisfied if $\Gamma_0$ is sufficiently regular, see Remark~\ref{regularityBrez}), one can follow the proof of Corollary~\ref{shapederivofJ3} and obtain that the shape gradient of $\mathcal{J}$ is given by Equality~\eqref{gradientdeforme2}.
\end{myRem}

\begin{myRem}\label{remarkadjoint}\normalfont
Consider the framework of Theorem~\ref{shapederivofJ}. We have seen in Remark~\ref{remnonlinear} that the expression of the material directional derivative~$\overline{u}'_0$ is not linear with respect to~$\boldsymbol{V}$. However one can observe that the scalar product~$\dual{\overline{u}'_0}{u_{0}}_{\HH^{1}(\Omega_{0})}$, that appears in the proof of Theorem~\ref{shapederivofJ}, is. This leads to an expression of the shape gradient~$\mathcal{J}'(\Omega_{0})(\boldsymbol{V})$ in Theorem~\ref{shapederivofJ} that is linear with respect to~$\boldsymbol{V}$. Hence we deduce that the Tresca energy functional $\mathcal{J}$ is shape
differentiable at $\Omega_{0}$. Note that, in the context of cracks and variational
inequalities involving unilateral conditions, it can already be observed that the shape gradient of the energy functional is linear with respect to $\boldsymbol{V}$ (see, e.g.,~\cite[Theorem 2.22 or Theorem 4.20]{Gilles} and references therein). Furthermore note that the shape gradient~$\mathcal{J}'(\Omega_{0})(\boldsymbol{V})$ depends only on $u_0$ (and not on $u’_0$) and therefore does not require the introduction of an appropriate adjoint problem to be computed explicitly. The linear explicit expression of~$\mathcal{J}'(\Omega_{0})(\boldsymbol{V})$ with respect to the direction $\boldsymbol{V}$ will allow us in the next Section~\ref{numericalsim} to exhibit a descent direction for numerical simulations in order to solve the shape optimization problem~\eqref{shapeOptim} on a two-dimensional example. It is worth noting that all previous comments are specific to the Tresca energy functional~$\mathcal{J}$. Other cost functionals, such as the least-square functional, can pose challenges to correctly define an adjoint problem due to nonlinearities in shape gradients. Note that these difficulties do not appear in the literature when using regularization procedures (see, e.g.,~\cite{HINTERMULLERLAURAIN}). Our approach, which is solely based on convex and variational analysis, does not address this challenge yet, and we believe it is an interesting area for future research.
\end{myRem}

\begin{myRem}\label{energyfunctionalNeu}\normalfont
Let us recall that the standard Neumann energy functional is
\begin{equation*}
    \mathcal{J}_\mathrm{N}(\Omega) := \frac{1}{2}\int_{\Omega}\left( \left\| \nabla{w_{\mathrm{N},\Omega}} \right\|^{2}+|w_{\mathrm{N},\Omega}|^{2}\right)+\int_{\Gamma}gw_{\mathrm{N},\Omega}-\int_{\Omega}fw_{\mathrm{N},\Omega},
\end{equation*}
for all $\Omega\in\mathcal{U}$, where $w_{\mathrm{N},\Omega}\in\HH^1(\Omega)$ is the unique solution to the standard Neumann problem
\begin{equation}\label{Neumannproblem222}\tag{SNP\ensuremath{{}_\Omega}}%\tag{SNP${}_\Omega$}
    \arraycolsep=2pt
\left\{
\begin{array}{rcll}
-\Delta w_{\mathrm{N},\Omega}+w_{\mathrm{N},\Omega} &=&f    & \text{ in } \Omega, \\
\partial_{\nn}w_{\mathrm{N},\Omega}&=&-g  & \text{ on } \Gamma.\\
\end{array}
\right.
\end{equation}
One can prove (see, e.g.,~\cite{ALL,HENROT}) that the shape gradient of the Neumann energy functional $\mathcal{J}_\mathrm{N}$ at~$\Omega_{0}\in\mathcal{U}$ in any direction $\boldsymbol{V}\in\mathcal{C}^{1,\infty}(\R^{d},\R^{d})$ is given by 
$$
\mathcal{J}_\mathrm{N}'(\Omega_{0})(\boldsymbol{V})=\int_{\Gamma_{0}} \boldsymbol{V}\cdot \boldsymbol{\nn} \left(\frac{\left\|\nabla{w_{\mathrm{N},\Omega_{0}}}\right\|^{2}+\left|w_{\mathrm{N},\Omega_{0}}\right|^{2}}{2}-fw_{\mathrm{N},\Omega_{0}}+Hgw_{\mathrm{N},\Omega_{0}} +\partial_{\nn}\left(gw_{\mathrm{N},\Omega_{0}}\right)\right).
$$
Thus the shape gradient of the Tresca energy functional $\mathcal{J}$ obtained in Corollary~\ref{shapederivofJ3} is close to the one of $\mathcal{J}_\mathrm{N}$ with the additional term
$$
\int_{\Gamma_{0}}gu_{0}\nabla{\left(\frac{\partial_{\nn}u_{0}}{g}\right)}\cdot\boldsymbol{V}.
$$
Note that, if $\partial_{\nn}u_{0}=-g$ \textit{a.e.} on $\Gamma_{0}$, then they coincide.
\end{myRem}

\begin{myRem}\label{energyfunctionalDP}\normalfont
Let us recall that the standard Dirichlet energy functional is
\begin{equation*}
    \mathcal{J}_\mathrm{D}(\Omega) := \frac{1}{2}\int_{\Omega}\left( \left\| \nabla{w_{\mathrm{D},\Omega}} \right\|^{2}+|w_{\mathrm{D},\Omega}|^{2}\right)-\int_{\Omega}fw_{\mathrm{D},\Omega},
\end{equation*}
for all $\Omega\in\mathcal{U}$, where $w_{\mathrm{D},\Omega}\in\HH^1(\Omega)$ is the unique solution to the Dirichlet problem
\begin{equation}\label{Dirichletproblem22}\tag{DP\ensuremath{{}_\Omega}}%\tag{DP${}_\Omega$}
    \arraycolsep=2pt
\left\{
\begin{array}{rcll}
-\Delta w_{\mathrm{D},\Omega}+w_{\mathrm{D},\Omega} &=&f    & \text{ in } \Omega, \\
w_{\mathrm{D},\Omega}&=&0  & \text{ on } \Gamma.\\
\end{array}
\right.
\end{equation}
One can prove (see, e.g.,~\cite{ALL,HENROT}) that the shape gradient of $\mathcal{J}_\mathrm{D}$ at $\Omega_{0}\in\mathcal{U}$ in any direction~$\boldsymbol{V}\in\mathcal{C}^{1,\infty}(\R^{d},\R^{d})$ is given by
$$
\mathcal{J}_\mathrm{D}'(\Omega_{0})(\boldsymbol{V})=-\int_{\Gamma_{0}} \boldsymbol{V}\cdot \boldsymbol{\nn}\left(\frac{\left\|\nabla{w_{\mathrm{D},\Omega_0}}\right\|^{2}+\left|w_{\mathrm{D},\Omega_0}\right|^{2}}{2}\right).
$$
Note that, if $u_0=0$ \textit{a.e.} on $\Gamma_0$, then $\nabla_{\Gamma_0}u_0=0$ \textit{a.e.} on $\Gamma_0$, thus $(\partial_{\nn}u_0)^2=||\nabla{u_0}||^2$ \textit{a.e.} on $\Gamma_0$ and thus the shape gradient of $\mathcal{J}$ obtained in Corollary~\ref{shapederivofJ3} coincides with the one of $\mathcal{J}_\mathrm{D}$.
\end{myRem}

\section{Numerical simulations}\label{numericalsim}
In this section we numerically solve an example of the shape optimization problem~\eqref{shapeOptim} in the two-dimensional case $d=2$, by making use of our theoretical results obtained in Section~\ref{mainresult}. The numerical simulations have been performed using Freefem++ software~\cite{HECHT} with P1-finite elements and standard affine mesh. We could use the expression of the shape gradient of~$\mathcal{J}$ obtained in Theorem~\ref{shapederivofJ} but, for the purpose of simplifying the computations, we chose to use the expression provided in Corollary~\ref{shapederivofJ3} under additional assumptions (such as~$u_0 \in \mathrm{H}^3(\Omega_0)$ that we assumed to be true at each iteration). The~$\mathcal{C}^3$ regularity of the shapes required in Corollary~\ref{shapederivofJ3} is not satisfied since we use a classical affine mesh and thus the discretized domains have boundaries that are only Lipschitz. Nevertheless it could be possible to impose more regularity by using curved mesh for example. However the use of such numerical techniques falls outside the scope of this paper in which the numerical simulations are intended to illustrate our theoretical results.

\subsection{Numerical methodology}\label{presentexample}
Consider an initial shape~$\Omega_0 \in \mathcal{U}$ (see the beginning of Section~\ref{mainresult} for the definition of~$\mathcal{U}$). Note that Corollary~\ref{shapederivofJ3} allows to exhibit a descent direction~$\boldsymbol{V_{0}}$ of the Tresca energy functional~$\mathcal{J}$ at~$\Omega_0$ as the unique solution to the Neumann problem
\begin{equation*}
\arraycolsep=2pt
\left\{
\begin{array}{rcll}
-\Delta \boldsymbol{V_{0}}+\boldsymbol{V_{0}} & = & 0  & \text{ in } \Omega_{0} , \\
\nabla{\boldsymbol{V_{0}}}\boldsymbol{\nn} & = &- \left(\frac{\left\|\nabla{u_{0}}\right\|^{2}+\left|u_{0}\right|^{2}}{2}-fu_{0}+Hg\left| u_{0} \right| -\partial_{\nn}\left(u_{0}\partial_{\nn}u_{0}\right)+gu_{0}\nabla{\left(\frac{\partial_{\nn}u_{0}}{g}\right)}\cdot\boldsymbol{\nn}\right)\boldsymbol{\nn} & \text{ on } \Gamma_{0},
\end{array}
\right.
\end{equation*}
since it satisfies~$\mathcal{J}'(\Omega_{0})(\boldsymbol{V_0})=-\left\|\boldsymbol{V_0}\right\|^{2}_{\HH^{1}(\Omega_0)^{d}}\leq0
$. 

In order to numerically solve the shape optimization problem~\eqref{shapeOptim} on a given example, we also have to deal with the volume constraint~$\vert \Omega \vert = \lambda>0$. To this aim, the Uzawa algorithm (see, e.g.,~\cite[Chapter 3 p.64]{ALL}) is used. In a nutshell it consists in augmenting the Tresca energy functional $\mathcal{J}$ by adding an initial Lagrange multiplier~$p_0 \in \R$ multiplied by the standard volume functional minus~$\lambda$. From~\cite[Chapter~6, Section 6.5]{ALL}, we know that the shape gradient of the volume functional at $\Omega_0$ is given by
$$\boldsymbol{V}\in\mathcal{C}^{1,\infty}(\R^{d},\R^{d}) \mapsto \int_{\Gamma_0}\boldsymbol{V}\cdot\boldsymbol{\nn}\in\mathbb{R},
$$
and thus one can easily obtain a descent direction~$\boldsymbol{V_0}(p_0)$ of the \textit{augmented} Tresca energy functional at~$\Omega_0$ by adding~$p_0\boldsymbol{\nn}$ in the Neumann boundary condition of~$\boldsymbol{V_0}$. This descent direction leads to a new shape $\Omega_{1}:=(\boldsymbol{\mathrm{id}}+\tau\boldsymbol{V_{0}}(p_0))(\Omega_{0})$, where~$\tau>0$ is a fixed parameter. Finally the Lagrange multiplier is updated as follows
$$
p_{1}:=p_{0}+\mu\left( \vert \Omega_{1} \vert-\lambda\right),
$$
where $\mu>0$ is a fixed parameter, and the algorithm restarts with~$\Omega_1$ and~$p_{1}$, and so on.

Let us mention that the scalar Tresca friction problem is numerically solved using an adaptation of iterative switching algorithms (see~\cite{11AIT}). This algorithm operates by checking at each iteration if the Tresca boundary conditions are satisfied and, if they are not, by imposing them and restarting the computation (see~\cite[Appendix C p.25]{4ABC} for detailed explanations). We also precise that, for all~$i\in~\mathbb{N}^{*}$, the difference between the Tresca energy functional $\mathcal{J}$ at the iteration $20\times i$ and at the iteration~$20\times (i-1)$ is computed. The smallness of this difference is used as a stopping criterion for the algorithm. Finally the curvature term~$H$ is numerically computed by extending the normal~$\boldsymbol{\mathrm{n}}$ into a function~$\tilde{\boldsymbol{\mathrm{n}}}$ which is defined on the whole domain~$\Omega_0$. Then the curvature is given by~$H=\mathrm{div}(\tilde{\boldsymbol{\mathrm{n}}})-\nabla(\tilde{\boldsymbol{\mathrm{n}}}) \boldsymbol{\mathrm{n}} \cdot \boldsymbol{\mathrm{n}}$ (see, e.g.,~\cite[Proposition 5.4.8 p.194]{HENROT}).

\subsection{Two-dimensional example and numerical results}
In this subsection, take~$d=2$ and $f\in\HH^{1}(\R^{2})$ given by
$$
\fonction{f}{\R^{2}}{\R}{(x,y)}{\displaystyle f(x,y)= \frac{5-x^{2}-y^{2}+xy}{4}\eta(x,y),}
$$
and, for a given parameter $\beta > 0$, let $g_\beta\in\HH^{2}(\R^{2})$ be given by
$$
\fonction{g_\beta}{\R^{2}}{\R}{(x,y)}{\displaystyle g(x,y)=\beta\left(1+\frac{(\sin{x})^{2}}{0.8}\right)\eta(x,y),}
$$
where $\eta\in\mathcal{C}^{\infty}_0(\R^2)$ is a cut-off function chosen appropriately so that $f$ and $g$ satisfy the assumptions of the present paper. The volume constraint considered is $\lambda=\pi$ and the initial shape $\Omega_{0}\subset\R^{2}$ is an ellipse centered at $(0,0)\in\R^2$, with semi-major axis $a=1.3$ and semi-minor axis $b=1/a$. 

In what follows, we present the numerical results obtained for this two-dimensional example using the methodology described in Subsection~\ref{presentexample}, and for different values of $\beta$:

\smallskip

\begin{itemize}
\item Figure~\ref{optishapeTrescaDirichlet} shows on the left the shape which solves Problem~\eqref{shapeOptim} for $\beta=0.49$, and on the right the one when the Tresca problem and its energy functional are replaced by Dirichlet ones (see Remark~\ref{energyfunctionalDP}). We observe that both shapes are very close. Indeed, with~$\beta \geq 0.49$, one can check numerically that the solution $w_{\mathrm{D},\Omega}$ to the Dirichlet problem~\eqref{Dirichletproblem22} satisfies~$|\partial_{\nn}w_{\mathrm{D},\Omega}|<g_{\beta}$ on~$\Gamma$, and thus is also the solution to the scalar Tresca friction problem~\eqref{Trescaproblem222}. One deduces from Remark~\ref{energyfunctionalDP} that the shape gradient of $\mathcal{J}$ and the one of $\mathcal{J}_\mathrm{D}$ coincide. Therefore, since the shape minimizing the Dirichlet energy functional~$\mathcal{J}_{\mathrm{D}}$ under the volume constraint $\lambda=\pi$ is a critical shape of the \textit{augmented} Dirichlet energy functional, it is also a critical shape of the \textit{augmented} Tresca energy functional.
\begin{figure}[h!]
    \centering
    \includegraphics[scale=0.45, trim = 4.2cm 0.1cm 4.4cm 0.6cm, clip]{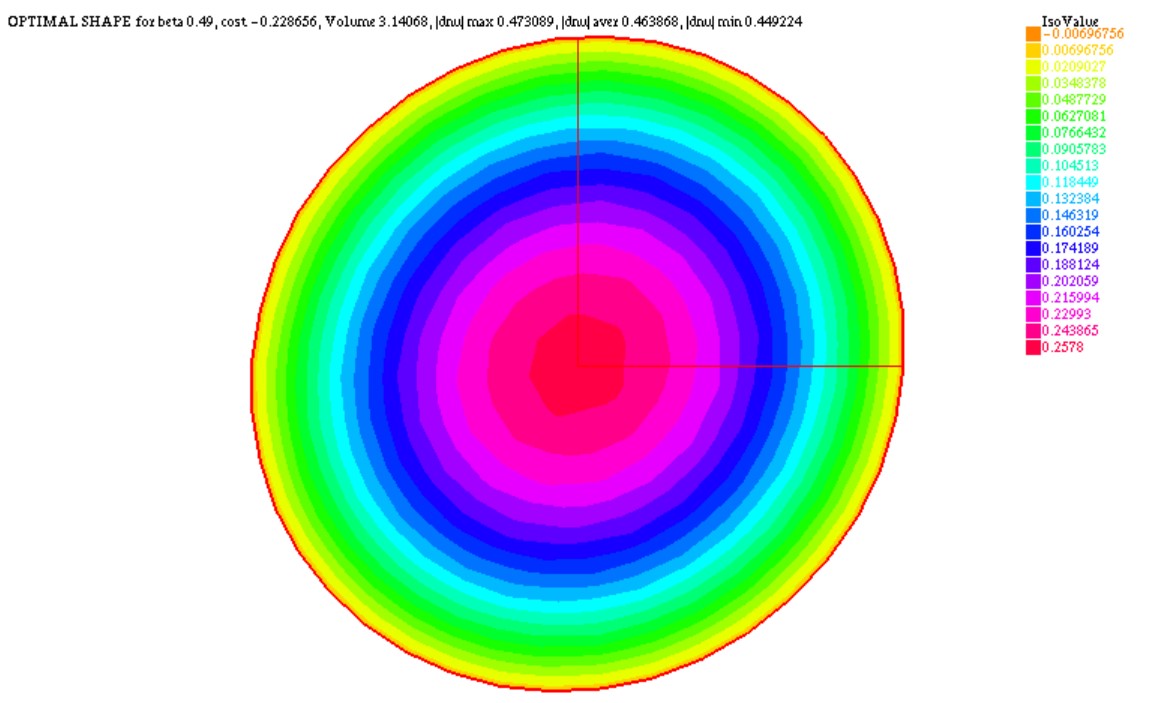} \hspace{2cm} 
    \includegraphics[scale=0.45, trim = 0.2cm 0.3cm 4.cm 0.7cm, clip]{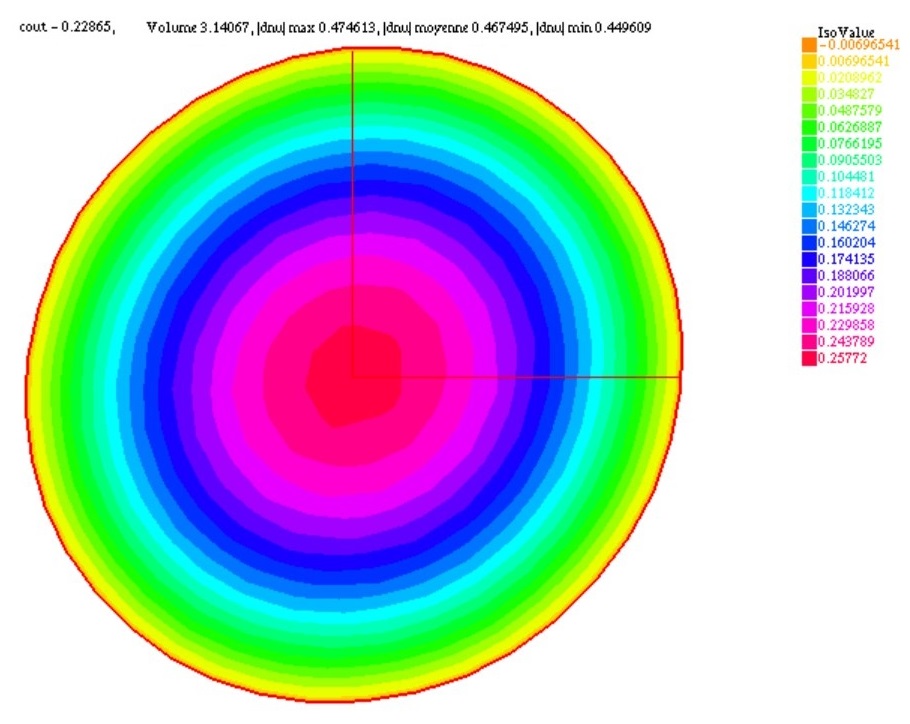}
    \caption{Shapes minimizing $\mathcal{J}$ (left) and $\mathcal{J}_{\mathrm{D}}$ (right), under the volume constraint $\lambda=\pi$, and with $\beta=0.49$.}
    \label{optishapeTrescaDirichlet}
\end{figure}

\smallskip

\item Figure~\ref{optishapeTresca} shows the shapes which solve Problem~\eqref{shapeOptim} for $\beta=0.46, 0.43, 0.37, 0.31$. The shapes are different from the one obtained on the left of Figure~\ref{optishapeTrescaDirichlet}. In that context, note that the normal derivative of the solution $u$ to the scalar Tresca friction problem~\eqref{Trescaproblem222} reaches the friction threshold $g_\beta$ on some parts of the boundary.
\begin{figure}[h!]
    \centering
    \includegraphics[scale=0.45, trim = 4.3cm -0.4cm 4.2cm 0.6cm, clip]{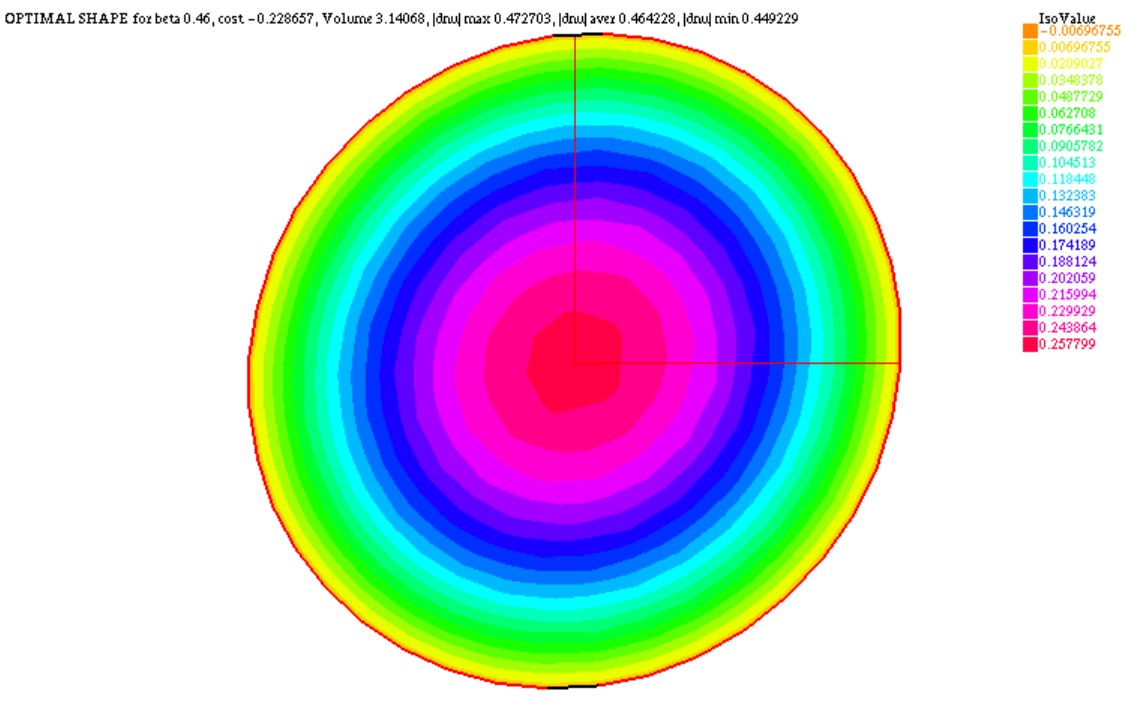} \hspace{2cm} 
    \includegraphics[scale=0.45, trim = 4.1cm -0.4cm 4.2cm 0.6cm, clip]{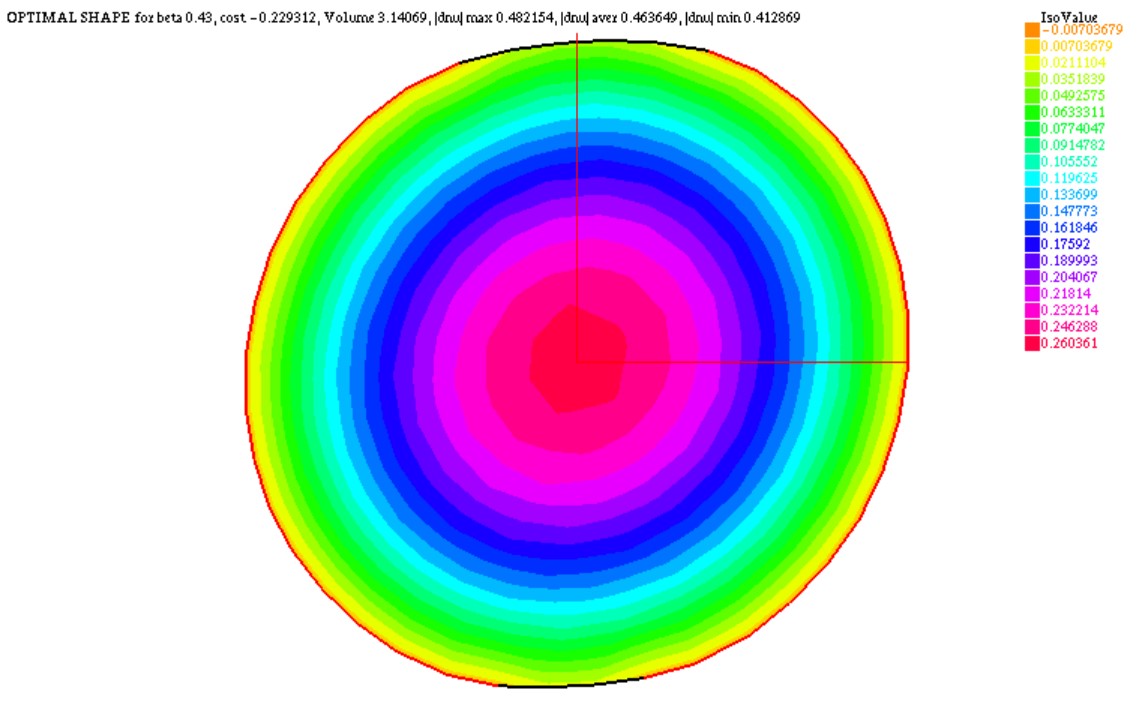}
    \includegraphics[scale=0.45, trim = 4.1cm 0.1cm 3.7cm 0.6cm, clip]{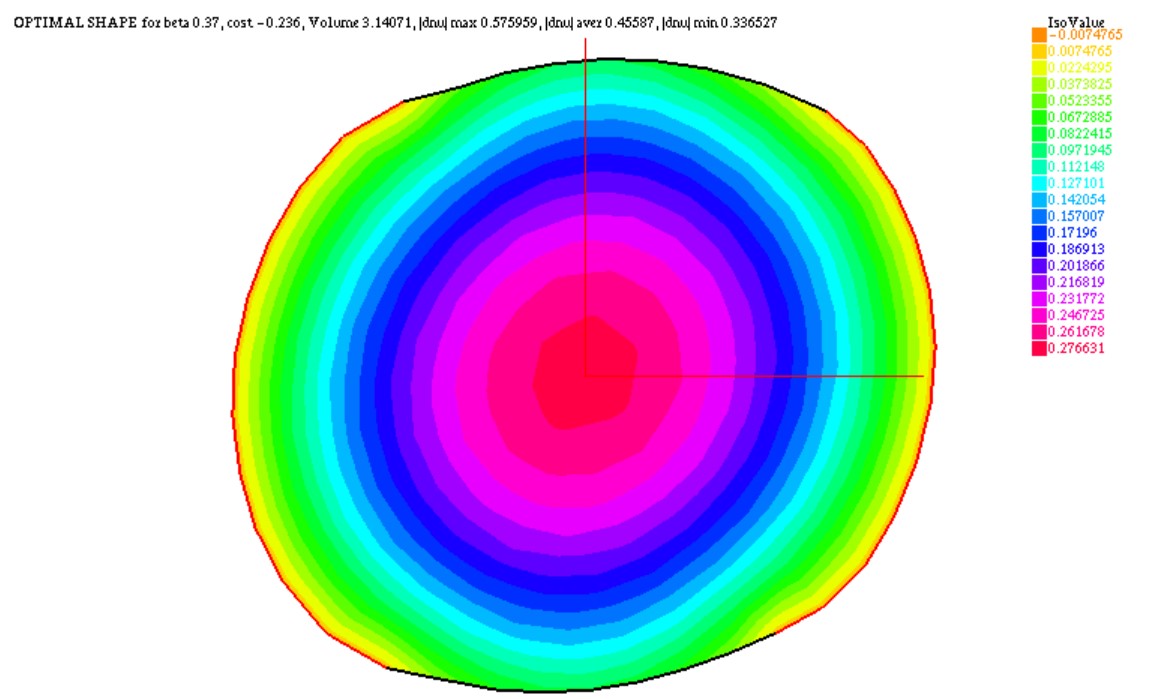} \hspace{2cm} 
    \includegraphics[scale=0.31, trim = 5.1cm 0.1cm 4.1cm 0.7cm, clip]{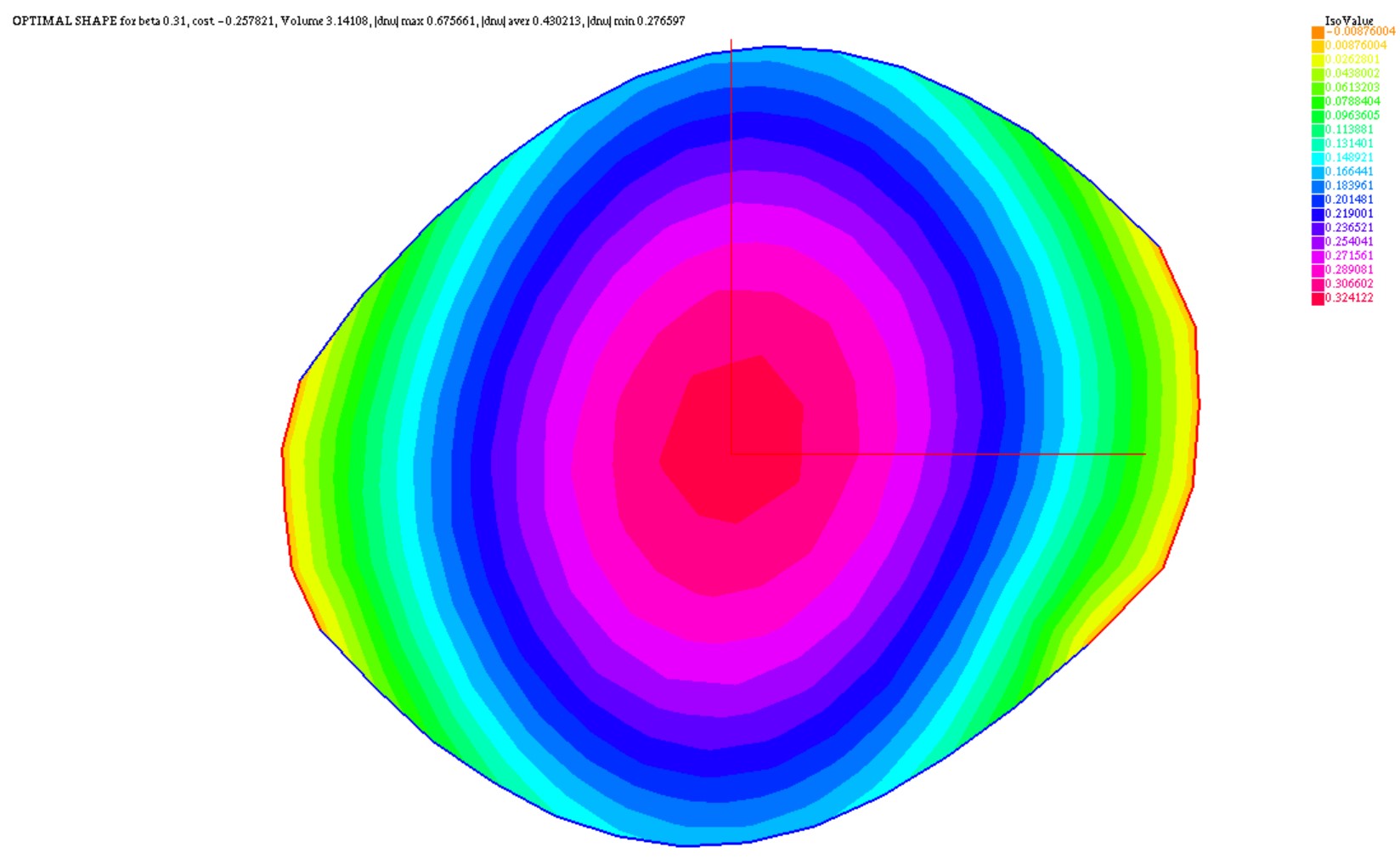}
    \caption{Shapes minimizing $\mathcal{J}$ under the volume constraint $\lambda=\pi$. From top-left to bottom-right,~$\beta=0.46,0.43,0.37,0.31$. The red boundary shows where $u=0$ and the black/blue boundary shows where~$|\partial_{\nn}u|=g_{\beta}$.}
    \label{optishapeTresca}
\end{figure}

\smallskip

\item Figure~\ref{optishapeTrescaNeumann} shows on the left the shapes which solve Problem~\eqref{shapeOptim} for $\beta=0.28, 0.1, 0.01$. Here the normal derivative of the solution $u$ to the scalar Tresca friction problem~\eqref{Trescaproblem222} reaches the friction threshold $g_\beta$ on the entire boundary. Moreover we can notice that these shapes are very close to the ones (presented on the right of Figure~\ref{optishapeTrescaNeumann}) that minimize~$\mathcal{J}_\mathrm{N}$ with~$g=g_\beta$ (see Remark~\ref{energyfunctionalNeu}) under the same volume constraint $\lambda=\pi$. Indeed, for these values of~$\beta$, one can check numerically that the solution $w_{\mathrm{N},\Omega}$ to the Neumann problem~\eqref{Neumannproblem222} with~$g=g_\beta$ satisfies $w_{\mathrm{N},\Omega}>0$ on $\Gamma$, and thus is also the solution to the scalar Tresca friction problem~\eqref{Trescaproblem222}. One deduces from Remark~\ref{energyfunctionalNeu} that the shape gradient of $\mathcal{J}$ and the one of $\mathcal{J}_\mathrm{N}$ coincide. Therefore, since the shape minimizing the Neumann energy functional~$\mathcal{J}_{\mathrm{N}}$ under the volume constraint $\lambda=\pi$ is a critical shape of the \textit{augmented} Neumann energy functional, it is also a critical shape of the \textit{augmented} Tresca energy functional.
\begin{figure}[h!]
    \centering
    \includegraphics[scale=0.44, trim = 4.3cm -0.4cm 4.2cm 0.6cm, clip]{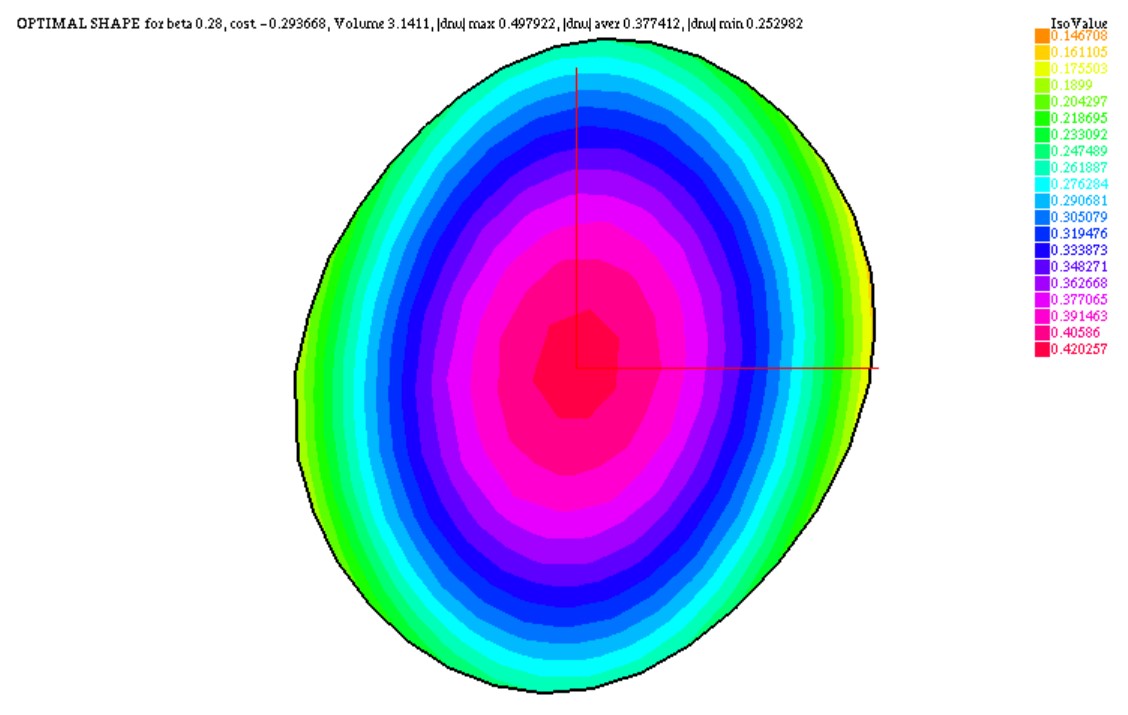}  \hspace{2cm} 
    \includegraphics[scale=0.44, trim = 4.3cm -0.4cm 4.2cm 0.6cm, clip]{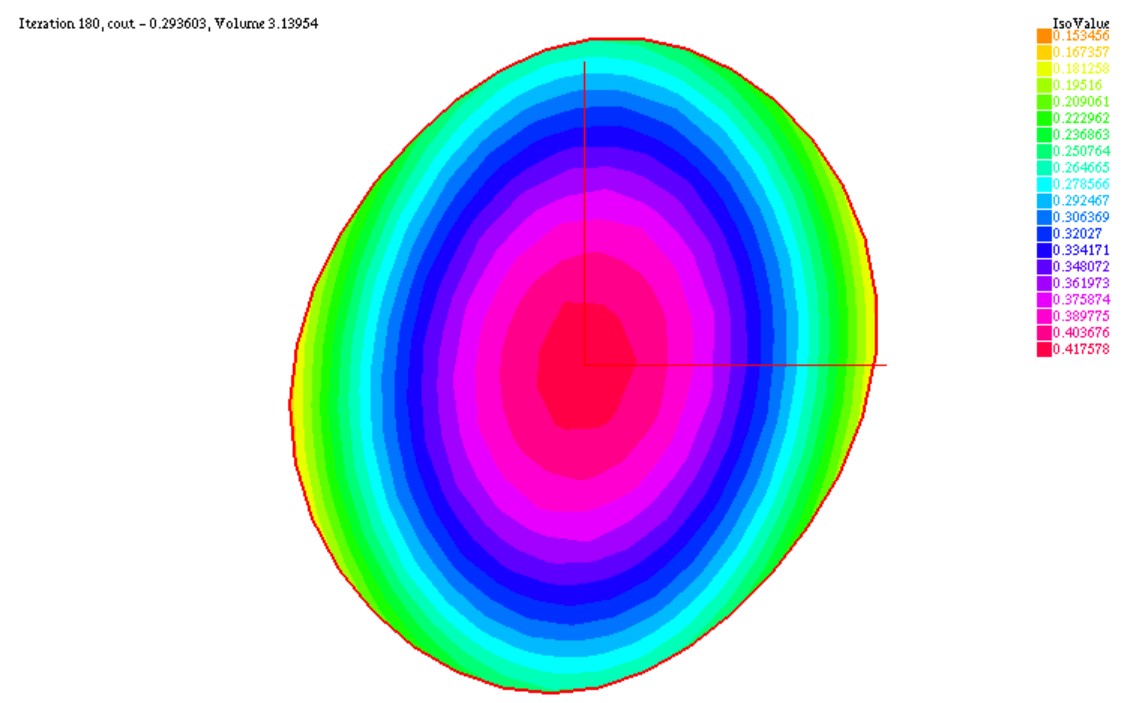}
    \includegraphics[scale=0.45, trim = 4.3cm -0.4cm 4.2cm 0.6cm, clip]{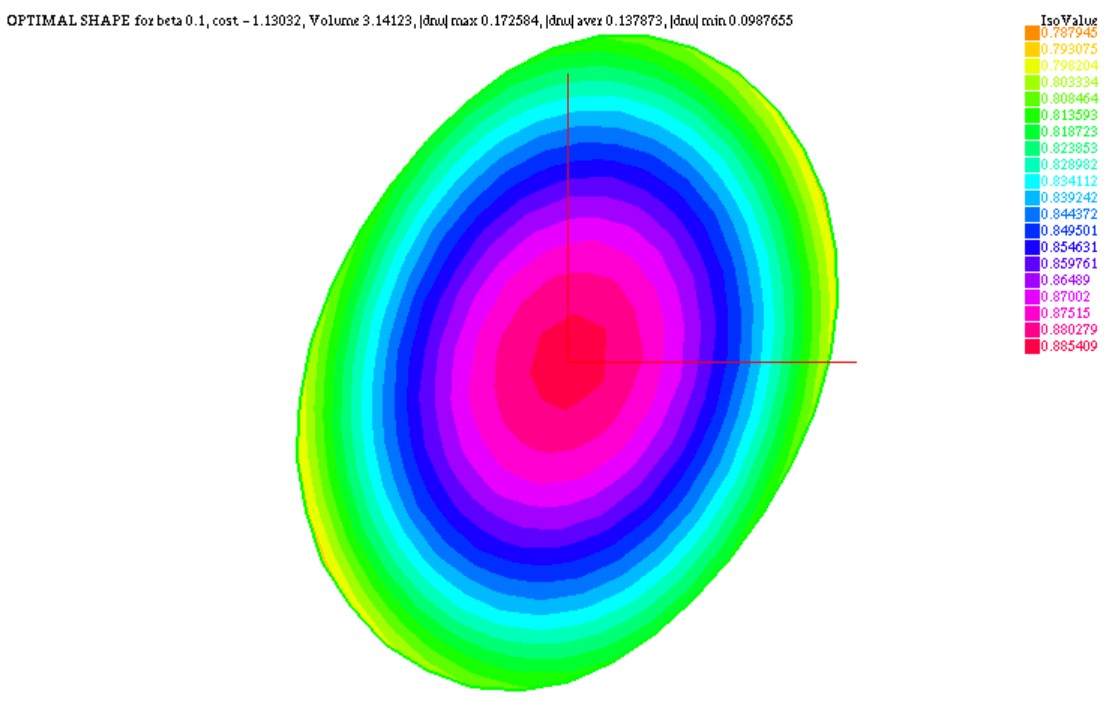}  \hspace{2cm} 
    \includegraphics[scale=0.45, trim = 4.3cm -0.4cm 4.2cm 0.5cm, clip]{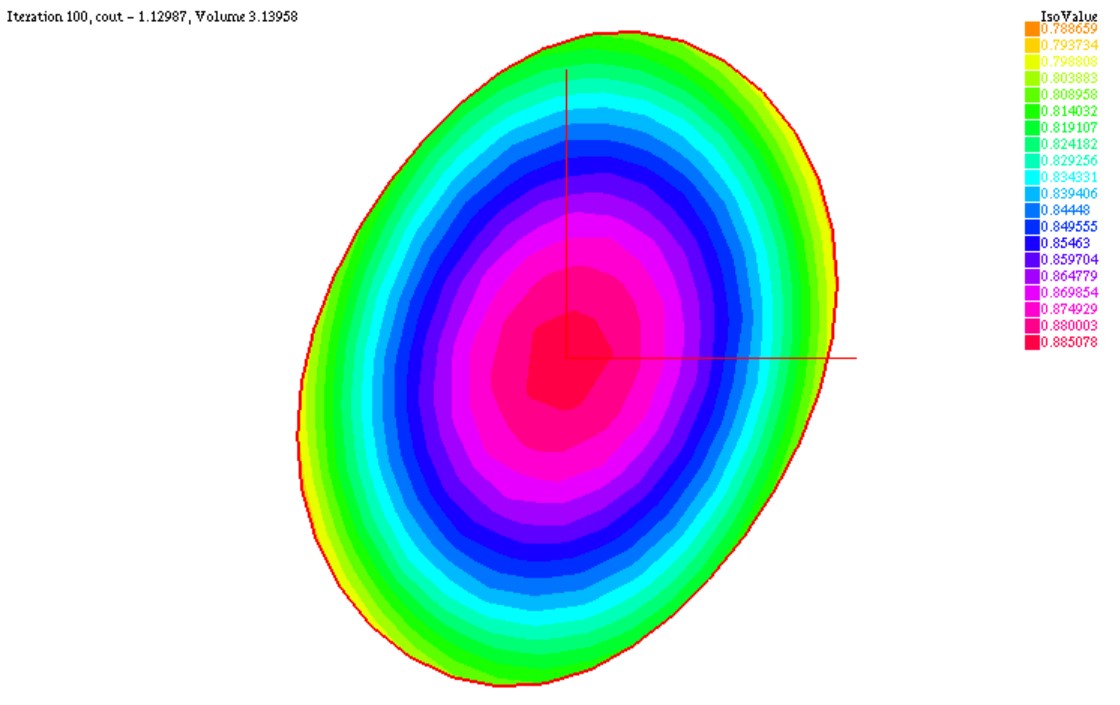}
    \includegraphics[scale=0.44, trim = 4.3cm 0.1cm 3.7cm 0.6cm, clip]{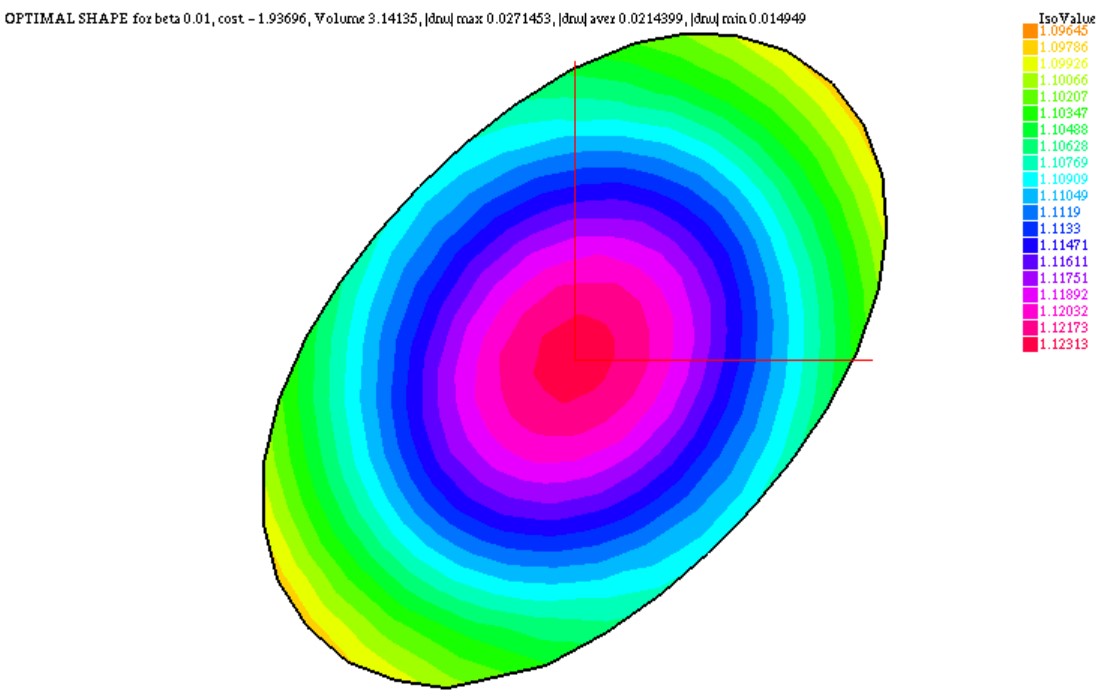}  \hspace{2cm} 
    \includegraphics[scale=0.44, trim = 4.3cm 0.1cm 4.2cm 0.6cm, clip]{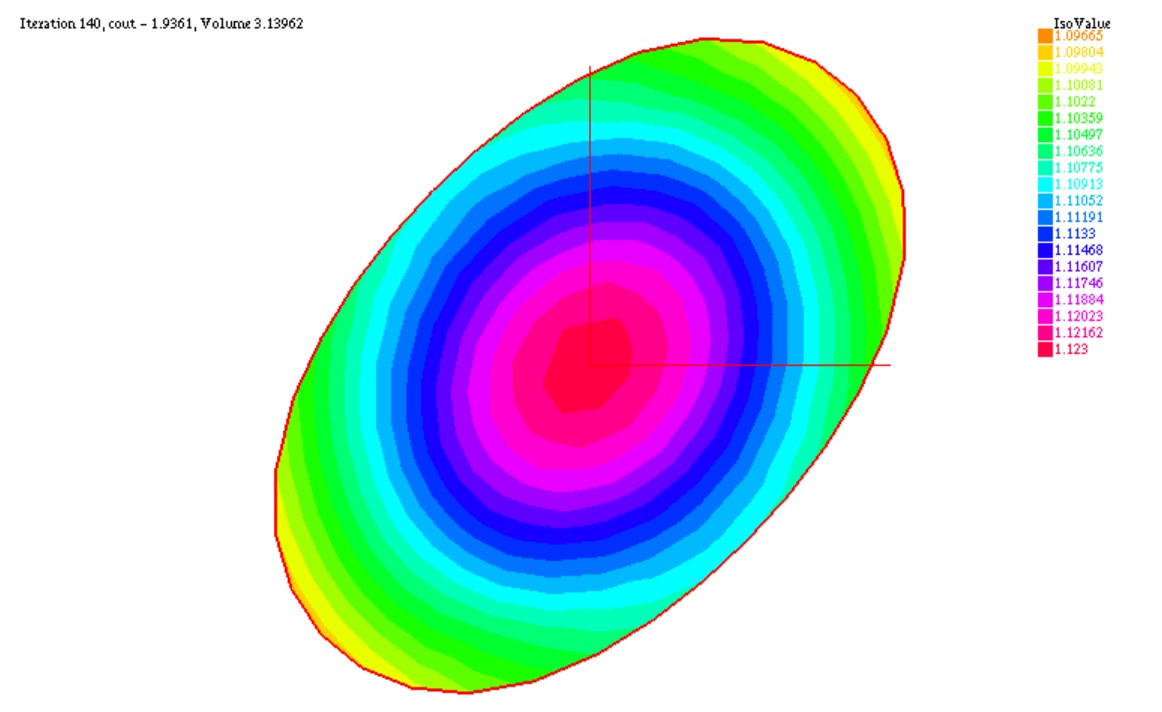}
    \caption{Shapes minimizing $\mathcal{J}$ (left) and $\mathcal{J}_{\mathrm{D}}$ (right), under the volume constraint $\lambda=\pi$. From top to bottom, $\beta=0.28,0.1,0.01$. }
    \label{optishapeTrescaNeumann}
\end{figure}
\end{itemize}

\smallskip

For more details and an animated illustration, we would like to suggest to the reader to watch the video \url{https://youtu.be/_MufZx3zsew} presenting all numerical results we obtained for different values of $\beta$ from~$0.7$ to $0.01$.

To conclude this paper, we would like to bring to the attention of the reader that, in the above numerical simulations, it seems that there is a kind of transition from optimal shapes associated with the Neumann energy functional to optimal shapes associated with the
Dirichlet energy functional. This transition is carried out by optimal shapes associated with the Tresca energy functional, continuously with respect to the friction threshold (precisely with respect to the parameter~$\beta$). However, we do not have a proof of such a highly nontrivial result. This may constitute an interesting topic for future investigations.

\bibliographystyle{abbrv}
\bibliography{biblio}

\end{document}